\documentclass[11pt,reqno]{amsart}
\usepackage[margin=1.05in]{geometry}
\usepackage[T1]{fontenc}
\usepackage{lmodern,microtype}
\usepackage{etoolbox}
\usepackage{amsmath,amssymb,amsthm,mathtools,mathrsfs}
\usepackage{booktabs,array,longtable,enumitem}
\usepackage{needspace}
\usepackage{xcolor}
\usepackage[colorlinks=true,linkcolor=blue!45!black,citecolor=blue!45!black,urlcolor=blue!45!black]{hyperref}
\hypersetup{pdftitle={Point Corrections and Wall-Crossing Geometry on the Quintic del Pezzo Threefold},pdfsubject={Version 33: streamlined introduction and section headings},pdfauthor={Zheyuan Fu}}
\newtheorem{theorem}{Theorem}[section]
\newtheorem{mainresult}{Theorem}

\newtheorem{proposition}[theorem]{Proposition}
\newtheorem{lemma}[theorem]{Lemma}
\newtheorem{corollary}[theorem]{Corollary}
\theoremstyle{definition}
\newtheorem{definition}[theorem]{Definition}

\theoremstyle{remark}
\newtheorem{remark}[theorem]{Remark}
\newcommand{\OO}{\mathcal O}
\newcommand{\PP}{\mathbf P}
\newcommand{\CC}{\mathbb C}
\newcommand{\ZZ}{\mathbb Z}
\newcommand{\cA}{\mathcal A}
\newcommand{\cE}{\mathcal E}
\newcommand{\cT}{\mathcal T}
\newcommand{\cI}{\mathcal I}
\newcommand{\cL}{\mathcal L}
\newcommand{\cS}{\mathcal S}

\newcommand{\cF}{\mathcal F}
\newcommand{\cU}{\mathcal U}
\newcommand{\cQ}{\mathcal Q}
\newcommand{\pt}{[\mathrm{pt}]}
\newcommand{\RHom}{R\mathcal Hom}
\newcommand{\Hom}{\operatorname{Hom}}
\newcommand{\Ext}{\operatorname{Ext}}
\newcommand{\ch}{\operatorname{ch}}
\newcommand{\rk}{\operatorname{rk}}
\newcommand{\Coh}{\operatorname{Coh}}

\newcommand{\Gr}{\operatorname{Gr}}
\newcommand{\Hilb}{\operatorname{Hilb}}
\newcommand{\Bl}{\operatorname{Bl}}

\newcommand{\Spf}{\operatorname{Spf}}
\newcommand{\Proj}{\operatorname{Proj}}
\newcommand{\Sym}{\operatorname{Sym}}
\newcommand{\Sing}{\operatorname{Sing}}
\newcommand{\Tot}{\operatorname{Tot}}
\newcommand{\Flag}{\operatorname{Flag}}
\newcommand{\Mat}{\operatorname{Mat}}
\newcommand{\cExt}{\mathcal E xt}

\newcommand{\im}{\operatorname{im}}
\newcommand{\id}{\operatorname{id}}

\newcommand{\bd}{b_d}
\newcommand{\ad}{a_d}
\newcommand{\Ntilde}{\widetilde N_5}
\newcommand{\Rtilde}{\widetilde R_5}

\setlist[enumerate]{label=\textup{(\roman*)},leftmargin=2em}
\setlist[itemize]{leftmargin=1.6em}
\numberwithin{equation}{section}
\BeforeBeginEnvironment{theorem}{\Needspace{5\baselineskip}}
\BeforeBeginEnvironment{mainresult}{\Needspace{5\baselineskip}}
\BeforeBeginEnvironment{proposition}{\Needspace{4\baselineskip}}
\BeforeBeginEnvironment{lemma}{\Needspace{4\baselineskip}}
\BeforeBeginEnvironment{corollary}{\Needspace{4\baselineskip}}
\title[Point corrections and wall-crossing geometry on $V_5$]{Point Corrections and Wall-Crossing Geometry on the Quintic del Pezzo Threefold}
\makeatletter
\patchcmd{\@setauthors}{\MakeUppercase{\authors}}{\authors}{}{}
\makeatother
\author{Zheyuan Fu}
\address{School of Mathematics, Shandong University, Jinan 250100, P. R. China}
\email{202511781@mail.sdu.edu.cn}
\date{}
\subjclass[2020]{14F08, 14J45, 14D20, 14E05}
\keywords{Bridgeland stability, Fano threefold, point correction, Kronecker quiver, normal bundle, determinantal variety}

\begin{document}
\begin{abstract}
Let $X=V_5$ be the quintic del Pezzo threefold. Along the negative zero-tilt-slope paths for rational-curve ideal classes, degree five is the first degree admitting a positive point correction. We determine the resulting degree-five outer good moduli space: it is reduced with exactly two irreducible components, a normal ten-dimensional principal branch and a smooth corrected branch $X\times\Gr(2,7)$. Their scheme-theoretic intersection is a normal determinantal ninefold. Along its singular locus, the principal branch has transverse rank-one $2\times4$ determinantal singularities, while the intersection has the corresponding $2\times3$ singularities.

We construct an explicit small resolution of the principal branch as the projectivization of a rank-five extension bundle over the blow-up of the Altavilla--Petkovi\'c--Rota Kuznetsov-component sixfold along its point locus. Uniform wall exclusion and point-correction bounds for degrees two through five establish the first correction threshold; the corrected degree-five factors are precisely the ideals of elliptic linear sections parametrized by $\Gr(2,7)$.
\end{abstract}
\maketitle

\section{Introduction}\label{sec:intro}
The first positive point correction in the wall crossing of rational-curve ideal classes on $V_5$ produces a second component of the degree-five moduli space. This paper determines the two components, their scheme-theoretic intersection, and the singularities and small resolution of the principal rational-quintic branch. 

Throughout, $X=V_5$ is smooth over $\CC$, with $\operatorname{Pic}(X)=\ZZ H$, $-K_X=2H$, and $H^3=5$. Write $\ch(F)=(r,cH,qH^2,eH^3)=(r,c,q,e)$ and normalize $\pt=(0,0,0,1/5)$. Unless stated otherwise, singular loci carry their reduced induced structure; scheme-theoretic intersections retain their stated scheme structures. Projective bundles and Grassmannians parametrize lines and subspaces, respectively.

\smallskip\noindent\textbf{The degree-five moduli space.}
Set $v_5=(1,0,-1,4/5)$, the ideal class of a curve with Hilbert polynomial $5m+1$. Fix a rational outer stability parameter
\[
\sigma=\sigma_{\sqrt{b^2-2},-b,s},\qquad b>3/2,\quad s>0.
\]
Put $V=H^0(X,\OO_X(H))$, so $\dim V=7$, and define the bundles $W$ and $K$ by
\begin{align}
0&\longrightarrow W\longrightarrow V\otimes\OO_X
 \longrightarrow\OO_X(H)\longrightarrow0,\label{eq:W}\\
0&\longrightarrow K\longrightarrow V\otimes\OO_X
 \longrightarrow J^1\OO_X(H)\longrightarrow0.\label{eq:K}
\end{align}
Thus $W_p$ consists of sections vanishing at $p$, and $K_p$ consists of sections with vanishing first jet at $p$. For $L\in\Gr(2,V)$ let $E_L$ be its codimension-two base locus. Endow
\[
\Sigma=\{(p,L)\in X\times\Gr(2,V):L\subset W_p,\ 
                       \dim(L\cap K_p)\ge1\}
\]
with its determinantal scheme structure, and put $Z=\Gr_X(2,K)\subset\Sigma$.

\begin{mainresult}\label{thm:mainambient}
The semistable stack of class $-v_5$ has a proper good moduli space $M^{ss}=M_\sigma(-v_5)$, which is reduced and has exactly two irreducible components:
\begin{enumerate}
\item The principal component $R$, the reduced closure of the smooth rational-quintic locus, is a normal tenfold, Cohen--Macaulay with rational singularities. Its reduced singular locus is $Z$, of codimension five. At $z\in Z$, its completed local ring is
\[
\widehat{\OO}_{R,z}\simeq\widehat{\OO}_{Z,z}
 [[x_{11},x_{12},x_{13},x_{14},x_{21},x_{22},x_{23},x_{24}]]/I_2(\mathbf x),
\]
where $\mathbf x=(x_{ij})$ is a $2\times4$ matrix.
\item The corrected component is the smooth thirteenfold
\[
C=X\times\Gr(2,V)\hookrightarrow M^{ss},\qquad
(p,L)\longmapsto[I_{E_L}[1]\oplus\OO_p].
\]
\item The scheme-theoretic intersection is $R\times_{M^{ss}}C=\Sigma$. It is an irreducible normal Cohen--Macaulay ninefold with reduced singular locus $Z$. Transversely to $Z$, its singularities are the rank-at-most-one locus of a $2\times3$ matrix.
\end{enumerate}
\end{mainresult}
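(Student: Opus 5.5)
The plan is to run the wall-crossing for the class $-v_5$ along the path $\sigma_{\sqrt{b^2-2},-b,s}$ from the large-volume (Hilbert scheme/Gieseker) chamber down to the outer chamber $b>3/2$. The first step is a uniform wall exclusion for degrees two through five. Using the Bogomolov--Gieseker type inequality on $V_5$ together with the zero-tilt-slope bounds, I will show that the only wall met by $-v_5$ along this path is the point-correction wall. This is the wall destabilized by the sequence $\OO_p\to E\to I_{E_L}[1]$, where $I_{E_L}$ has class $v_5+\pt$, namely the ideal of an elliptic quintic linear section $E_L=X\cap\PP(L^\perp)$. Classifying objects of class $v_5+\pt$ that are stable near the wall (they are exactly ideals of codimension-two base loci of pencils $L\subset V$) identifies this family with $\Gr(2,V)$. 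Computing $\Ext^1(I_{E_L}[1],\OO_p)$ and $\Ext^1(\OO_p,I_{E_L}[1])$ then describes the semistable objects: polystable ones are $I_{E_L}[1]\oplus\OO_p$, and the other S-equivalence classes come from extensions in the two directions. Properness and existence of $M^{ss}$ follow from the Alper--Halpern-Leistner--Heinloth criterion, using boundedness and $\Theta$-reductivity of Bridgeland semistable stacks.

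Next I identify the components. Away from polystable points the stack is a $\CC^*$-gerbe over an open subset of the moduli of stable objects, and these are derived duals (shifts) of ideals of rational quintics. Its closure $R$ has dimension $10=\chi$-count $h^0(N)=5\cdot2$. The locus of polystable points is $C=X\times\Gr(2,V)$, of dimension $3+10=13$, so it is not contained in $R$. Any semistable non-polystable object degenerates to a point of $C$, and deformation theory at a stable point shows that every such object lies in $R$. Hence $M^{ss}=R\cup C$. For $R\times_{M^{ss}}C$ I use the Luna étale slice at $x=[I_{E_L}[1]\oplus\OO_p]$. With stabilizer $G=(\CC^*)^2/\CC^*$, the local model is $\Ext^1(x,x)/\!\!/\CC^*$, cut out by a Kuranishi map. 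The weight-zero part is $T_C$, and the weight $\pm1$ parts are $A=\Ext^1(\OO_p,I_{E_L}[1])$ and $B=\Ext^1(I_{E_L}[1],\OO_p)$. The GIT quotient of $A\oplus B$ is the affine cone over $\PP A\times\PP B$, with invariants $a_ib_j$. I expect $\dim B=1$ generically, given by the condition $p\in E_L$. When $p\in E_L$ and additionally $L\cap K_p\neq0$ (the pencil contains a section singular at $p$, so $E_L$ is singular at $p$, or the tangent condition jumps), the dimensions increase. Concretely I will show that $\dim A$ and $\dim B$ jump to $2\times 4$ over $Z$ after removing the $T_C$ directions. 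The quadratic Kuranishi term is the Yoneda pairing $A\otimes B\to\Ext^2(x,x)$ projected to the weight-zero part, and its vanishing locus gives $C$ together with the intersection condition. Carrying this out identifies the intersection with the determinantal scheme $\Sigma$: the incidence $L\subset W_p$ together with the rank condition $\dim(L\cap K_p)\ge1$, the latter coming from the $2\times3$ matrix $L\to V/K_p\cong J^1_p$ restricted to $W_p$.

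For the singularity statements, the slice computation gives formally $\widehat\OO_{R,z}\simeq\widehat\OO_{Z,z}[[\mathbf x]]/I_2(\mathbf x)$ with $\mathbf x$ the $2\times4$ matrix of invariants $a_ib_j$. Here $\dim Z=3+\dim\Gr(2,4)=7$ for $K$ of rank $4$, the rank-one $2\times4$ locus has dimension $5$, and $7+5=12$. This overcounts, so I will recheck by taking $K$ of rank $3$: $\dim Z=3+2=5$ and $5+5=10$, which is consistent with codimension five. Similarly $\Sigma$ is locally $Z\times\{2\times3\text{ rank}\le1\}$, of dimension $5+4=9$. Normality, the Cohen--Macaulay property and rational singularities then follow from the classical facts about generic determinantal varieties, together with smoothness of $R$ and $\Sigma$ off $Z$. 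Off $Z$, $R$ is smooth because there $\Ext^2$ obstructions vanish by Serre duality and $-K_X=2H$. Reducedness of $M^{ss}$ follows because the local model $\{\mathbf x:\text{rank}\le1\}\cup T_C$ is reduced, or alternatively by Serre's criterion.

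I expect the main obstacle to be the Kuranishi computation at points of $Z$. It requires the exact dimension jumps of $A$ and $B$ in terms of $K_p$, and a proof that the higher-order obstruction terms do not change the formal type beyond the quadratic Yoneda pairing. To handle this I would try to realize the local model globally via an explicit degeneracy-locus description over $X\times\Gr(2,V)$ using the bundles $W$ and $K$.
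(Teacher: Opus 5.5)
\paragraph{The local model at $Z$.} Your central step fails because the cross-Ext dimensions do not jump over $Z$. Write $e_i=s_i(p)$. The Koszul resolution of $I_{E_L}$ shows that $\Hom(I_{E_L},\OO_p)$ is the kernel of $\CC^2\xrightarrow{(-e_2,e_1)}\CC$, and that $\Ext^2(\OO_p,I_{E_L})$ is dual to its cokernel. Hence your $B=\Ext^1(I_{E_L}[1],\OO_p)$ and $A=\Ext^1(\OO_p,I_{E_L}[1])$ have dimensions $(2,1)$ at every $(p,L)$ with $p\in E_L$, and $(1,0)$ otherwise. Nothing further changes on $\Sigma$ or on $Z$.

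The cross directions therefore contribute only the two invariants $q_i=a_ib$, and their quotient is smooth. There is no $2\times4$ matrix of invariants $a_ib_j$. Your model also contradicts (iii). If $\mathbf x$ consisted of cross invariants, setting them to zero would give $R\times_{M^{ss}}C=Z$ locally, a fivefold rather than the ninefold $\Sigma$.

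In the correct model the $2\times4$ matrix is $\bigl(M\mid q\bigr)$. Here $M=(m_{ij})$ is the first-jet matrix of $s_1,s_2$ at $p$, so six of the eight entries are weight-zero coordinates on $C$ itself. This is why $R\cap C$ is cut out by $e_1=e_2=0$ together with $I_2(M)$.

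At $c\in Z$ the closed-point Yoneda pairing $A\otimes B\to\Ext^2(\OO_p,\OO_p)\simeq\Lambda^2T_pX$ vanishes identically, because its coefficients are the $m_{ij}(c)=0$. A quadratic Kuranishi computation at the point therefore sees nothing. What is needed is a relative computation over the diagonal family:
\begin{itemize}
\item the relative mixed complexes give the base-linear relations $a_1e_2-a_2e_1$, $be_1$, $be_2$;
\item the relative Yoneda coefficient gives $b(a_1m_{2k}-a_2m_{1k})$;
\item a separate argument absorbs all higher terms. The paper projects the curved Stasheff identity to $\Ext^3(\OO_p,\OO_p)$, obtaining a syzygy among the six Kuranishi components, and then makes equivariant coordinate changes on each rank stratum.
\end{itemize}
Only after taking invariants does one get $S_c[[q_1,q_2]]/(e_iq_j,\ q_1m_{2k}-q_2m_{1k})=P_C\cap P_R$, with $P_R=(e_1,e_2,I_2(M\mid q))$. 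From this follow the $2\times4$ model of $R$, the $2\times3$ model of $\Sigma=P_C+P_R$, reducedness, and smoothness of $R$ along $\Sigma\setminus Z$. Your appeal to vanishing $\Ext^2$ cannot give that last smoothness, since $\Ext^2(x,x)\neq0$ at those polystable points.

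\paragraph{The global structure.} Local deformation theory at a stable point cannot place that point in the closure of the smooth rational quintics. So it cannot exclude a further component of stable objects disjoint from $R$. Your claimed vanishing of $\Ext^2(I_C,I_C)$ ``by Serre duality and $-K_X=2H$'' is also unjustified for arbitrary Cohen--Macaulay curves with Hilbert polynomial $5m+1$. The paper uses genuinely global input here:
\begin{enumerate}
\item For saturated $I_C$, the equality $\chi(I_C(1))=1$ and the terminal-slope vanishings force a hyperplane section $S\supset C$.
\item The residual $I_{C/S}(1)$ has the APR class $w$. The APR decomposition of $M_H(w)$ into root and point-on-hyperplane components puts it in the root component. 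The point component is excluded because it would produce a zero-dimensional saturation defect.
\item The reconstruction through $\Rtilde=\PP_{\Ntilde}(\cE_5)$ shows that the stable locus is smooth of dimension ten and lies in $R$.
\end{enumerate}
Non-saturated objects with $m=0$ must separately be shown to be S-equivalent to points of $C$.

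You also never supply two inputs your plan relies on:
\begin{itemize}
\item the correction-length bound $m\le1$, which comes from $D_5(v_5+m\pt)=(15-15m)/75\ge0$ at the terminal point;
\item a proof that every tilt-semistable object of class $v_5+\pt$ is some $I_{E_L}$.
\end{itemize}
Finally, the path does not cross a point-correction wall. The class $-v_5$ has phase one at every outer parameter, so $\OO_p$ and $I_C[1]$ are aligned throughout, and there are no actual walls on the outer segment.
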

The complete two-component local ring is computed in Theorem~\ref{thm:kuranishi} and Proposition~\ref{prop:kuranishi}; Theorem~\ref{thm:ambientgluing} proves the global statement.

\smallskip\noindent\textbf{Construction of the principal branch.}
Let $N=N^{\mathrm{Ku}}_5$ be the smooth $3$-Kronecker sixfold of \cite{APR}, and let $j:X\hookrightarrow N$ be its point locus. Write $\tau_5=v_5-\ch(\OO_X(-1))$ and $w=e^H\tau_5$. The APR chamber comparison gives a root contraction $\rho:\mathcal R_{\mathrm{root}}\to N$; our geometric construction uses this contraction directly.

\begin{mainresult}\label{thm:mainconstruction}
\begin{enumerate}
\item The point locus has normal bundle
\begin{equation}\label{eq:mainnormal}
N_{X/N}\simeq K\otimes\OO_X(H).
\end{equation}
The normalization of the reduced graph closure of $\rho^{-1}$ is $\Ntilde=\Bl_{j(X)}N$, with exceptional divisor $J\simeq\PP_X(K)$.
\item The residual universal family on $\Ntilde$ determines a rank-five extension bundle $\mathcal E_5$ and a canonical smooth tenfold $\Rtilde=\PP_{\Ntilde}(\mathcal E_5)$. Universal extensions induce a small resolution
\[
\Phi:\Rtilde\longrightarrow R.
\]
Over $J$, the projective bundle is the incidence resolution
\[
\widetilde\Sigma=\{(p,\ell,L):\ell\subset K_p,\ 
                         \ell\subset L\subset W_p\}\longrightarrow\Sigma.
\]
The restriction of $\Phi$ is $(p,\ell,L)\mapsto(p,L)$, followed by $\Sigma\hookrightarrow R$. Its exceptional locus is $\Flag_X(1,2;K)$, and its only positive-dimensional fibers are $\PP^1$ over $Z$. Away from this locus, $\Phi$ is an isomorphism.
\end{enumerate}
\end{mainresult}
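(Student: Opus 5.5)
The plan has three stages: the normal bundle, the blow-up identification, and the extension bundle together with its small map.

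\textbf{Normal bundle.} I would compute $N_{X/N}$ by deformation theory of the point objects in the Kuznetsov component. Write $P_p\in N$ for the object attached to $p$; in the APR description it is the projection of $\OO_p$, up to twist, that is, the cone of the evaluation from the relevant exceptional bundle. Then $T_{[P_p]}N=\Ext^1(P_p,P_p)$ and $T_pX=\Ext^1(\OO_p,\OO_p)$, so
\[
(N_{X/N})_p=\operatorname{coker}\bigl(T_pX\to\Ext^1(P_p,P_p)\bigr).
\]
Using the Kronecker presentation, $\Ext^1(P_p,P_p)$ is the cokernel of the $\mathfrak{gl}$-action on the $3$-Kronecker representation space. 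The point representations are exactly those whose maps factor through evaluation at $p$. I expect the quotient to be naturally $\Hom(\ell_p,\ K_p)$-like, i.e.\ $K_p\otimes H_p$, where the twist $H_p$ comes from the line $\OO_X(H)|_p$ of values at $p$. To globalize, I would write the sequence $0\to T_X\to j^*T_N\to K(H)\to0$ via the jet sequence \eqref{eq:K}. Identifying the middle term with a universal $\mathcal Ext^1$ computed from the universal family, together with \eqref{eq:W} and \eqref{eq:K}, gives \eqref{eq:mainnormal}, since $\rk K=3=6-3$.

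\textbf{Graph closure.} Next I would show that the graph closure of $\rho^{-1}$ is $\Bl_{j(X)}N$. Over $N\setminus j(X)$ the root contraction is bijective, with fibers single residual objects. Along $j(X)$, the limits of root objects are parametrized by normal directions. I would check that the root contraction is, locally near $j(X)$, the blow-up: $\rho^{-1}$ extends across the blow-up, because a one-parameter family approaching $P_p$ with normal direction $\ell\subset K_p$ has a unique limit in the root moduli. The map from the normalization of the graph closure to $\Bl$ is then finite and birational onto a normal target, hence an isomorphism, and the exceptional divisor is $J=\PP_X(N_{X/N})\simeq\PP_X(K)$.

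\textbf{Extension bundle and $\Phi$.} On $\Ntilde$ there is a residual universal family $\mathcal G$, obtained by elementary modification along $J$ of the pullback of the APR universal family. I would set
\[
\mathcal E_5=\mathcal Ext^1_\pi\bigl(\mathcal G,\ \OO_X(-1)\bigr)\quad\text{(or its dual)}.
\]
Its rank is five by Riemann--Roch, using $\chi(w)$. Constancy of the rank follows from vanishing of $\mathcal Ext^0$ and $\mathcal Ext^2$ fibrewise, checked separately on and off $J$. Cohomology and base change then make $\mathcal E_5$ locally free. The universal extension on $\PP(\mathcal E_5)$ is $\sigma$-semistable of class $-v_5$, which gives the map $\Phi$ to $M^{ss}$. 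Its image is irreducible of dimension $10$ and contains the rational-quintic locus, so $\Phi$ lands in $R$.

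Off $J$, distinct extensions give non-isomorphic objects, and $\Phi$ is injective and unramified. Over $J$ at $(p,\ell)$, the fibre $\PP^4$ is identified with the lines $L$ satisfying $\ell\subset L\subset W_p$, since $W_p/\ell$ has dimension $5$. The extension then becomes $I_{E_L}[1]\oplus\OO_p$ with $E_L$ the base locus of $L$, which produces $\widetilde\Sigma\to\Sigma$. The fibre over $(p,L)$ is $\PP(L\cap K_p)$. It is a point unless $L\subset K_p$, that is, unless $(p,L)\in Z$, where it is $\PP^1$. So the exceptional locus is $\Flag_X(1,2;K)$, of dimension $3+2+2=7$, which has codimension $3\ge2$ in the tenfold; hence $\Phi$ is small. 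Since $\Phi$ is bijective and unramified off the exceptional locus and $R$ is normal (Theorem~\ref{thm:mainambient}(i)), Zariski's main theorem gives that $\Phi$ is an isomorphism there.

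\textbf{Main obstacle.} I expect the hardest part to be the unramifiedness of $\Phi$ along $J\setminus\Flag$, equivalently the injectivity of $d\Phi$ there. To handle it, I would compare $d\Phi$ with the local $2\times4$ model of Theorem~\ref{thm:kuranishi}. In that model the small resolution is the standard resolution of rank-$\le1$ $2\times4$ matrices by $\PP^1$, the projectivization of the kernel line. I would identify $\Phi$ locally with this standard resolution.
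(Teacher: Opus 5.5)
\textbf{Main gap: the extension bundle.} You claim that $\cExt^2$ vanishes fibrewise on $J$ as well as off it, so that cohomology and base change makes $\cExt^1_\pi$ locally free of rank five. This is false. Over $b=(p,[s])\in J$ the residual sheaf is $T_{p,S}=i_*I_{p/S}(-H)$. By Lemma~\ref{lem:boundaryext}, $\Ext^1(T_{p,S},\OO_X(-H))\simeq V/\langle s\rangle$ is six-dimensional and $\Ext^2$ is one-dimensional, whereas off $J$ the dimensions are $(5,0)$. Riemann--Roch fixes only the difference. Your boundary description tacitly works inside this six-dimensional space, and nothing in your argument selects the hyperplane $W_p/\langle s\rangle$ whose projectivization is the $\PP^4$ you want.

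The missing idea is the first normal differential of $\mathcal C=R\pi_*\RHom(\cT,\OO_X(-H))$ across $J$. Proposition~\ref{prop:evaluation} shows it is a nonzero multiple of evaluation $V/\langle s\rangle\to\OO_X(H)|_p$. The key step is that the obstruction $a[1]\circ\kappa_n\in\Ext^2(I_{p/S},\OO_X(-H))$ is nonzero. Otherwise the defining sequence would lift, the deformation would stay infinitesimally in the point-on-hyperplane component and map into $T_pX$, contradicting injectivity of $d\pi$ on $N_{J/\Ntilde}$ (Remark~\ref{rem:contactorder}). Only then does the simple-jump form of Lemma~\ref{lem:simplejump} give $\mathcal H^1(\mathcal C)$ locally free of rank five with $\cE_5|_J\simeq(q^*W/\mathcal L_J)\otimes\mathcal B_J$, hence $\Rtilde|_J\simeq\widetilde\Sigma$ (Theorem~\ref{thm:extbundle}). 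For a jumping complex with vanishing normal differential, $\mathcal H^1$ need not be locally free. Even when it is, its base-change image on $J$ need not be the kernel of evaluation. Note also that the boundary extension is not $I_{E_L}[1]\oplus\OO_p$ itself. It is only S-equivalent to it, via the elementary modification of Lemma~\ref{lem:boundaryelementary}, and this S-equivalence is why $\Phi$ forgets $\ell$.

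\textbf{Other steps that need repair.}
\begin{itemize}
\item \emph{Normal bundle.} The identification you ``expect'' needs the sequence $0\to T_X\to j^*T_N\to W\otimes\cL\xrightarrow{\alpha}\bigwedge^2T_X$ from the relative mutation triangle, together with $\ker\alpha=K\otimes\cL$ from \cite[Lemma~4.9]{APR}.
\item \emph{Graph closure.} Saying that limits depend only on the normal direction is exactly what must be proved; arcs of higher contact order do not see the first-order direction. You also never produce a morphism from the graph closure to $\Bl_{j(X)}N$. The paper supplies both. The determinant $\Delta$ of the universal Kronecker complex has coefficient ideal equal to $I_{j(X)}$ (Lemmas~\ref{lem:firstordersupport} and~\ref{lem:graphideal}, via Nakayama), so the support-map graph is the Rees-algebra blow-up. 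A regular support morphism on $\mathcal R_{\mathrm{root}}$ then gives a finite birational map from the graph closure.
\item \emph{Isomorphism off the exceptional locus.} Deducing this from normality of $R$ (Theorem~\ref{thm:mainambient}(i)) is circular. Theorem~\ref{thm:ambientgluing} derives normality of $R$ off the boundary from Theorem~\ref{thm:principalcomponent}(i). That result is proved directly: on Artin deformation functors one reconstructs the hyperplane (using $H^0(I_C(H))\simeq\CC$), the residual root, and the extension line.
\item \emph{Smallness.} The paper first proves smallness for the Stein factorization $\Rtilde\to R^\nu$, using the formal linearization along $F$ and the resulting $2\times4$ Segre model (Theorems~\ref{thm:formallinearization} and~\ref{thm:normalization}). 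It transfers this to $R$ only after the ambient computation gives $R=R^\nu$. This replaces your proposed unramifiedness check.
\item \emph{Dimension count.} $\dim\Flag_X(1,2;K)=3+2+1=6$, not $7$.
\end{itemize}
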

The APR chamber contraction \cite[Propositions~4.11--4.12]{APR} is the input for this construction. The extension bundle and its boundary base-change formula are proved in Theorem~\ref{thm:extbundle}; the actual morphism, formal contraction, and normality of $R$ are proved in Section~\ref{sec:ambient}.

\smallskip\noindent\textbf{The first point-correction threshold.}
For a curve with Hilbert polynomial $dm+1$, write
\begin{equation}\label{eq:vd}
v_d=\left(1,0,-\frac d5,\frac{d-1}{5}\right).
\end{equation}
For $s>0$ consider
\begin{equation}\label{eq:path}
\gamma_{d,s}(b)=\sigma_{\sqrt{b^2-2d/5},-b,s},\qquad b>\sqrt{2d/5},
\end{equation}
and set
\begin{equation}\label{eq:terminal}
\bd=\frac12+\frac d5,\qquad\ad=|\bd-1|,\qquad p_d=(\ad,-\bd).
\end{equation}
The outer and inner segments are $b>\bd$ and $\sqrt{2d/5}<b<\bd$, respectively. Strictly semistable objects are retained throughout.

\begin{mainresult}\label{thm:mainnumerical}
Let $2\le d\le5$ and $s>0$.
\begin{enumerate}
\item No actual tilt wall with finite-slope factors strictly encloses
\[
W_d:\quad\alpha^2+(\beta+\bd)^2=\ad^2
\]
in $\beta<0$. This assertion is independent of the third Chern character.
\item Every object $E$ of class $-v_d$ in the second-tilt heart on the outer segment fits into
\[
0\longrightarrow F[1]\longrightarrow E\longrightarrow Q\longrightarrow0,
\quad\ch(F)=v_d+m\pt,\quad\operatorname{length}(Q)=m,
\]
where $F$ is tilt-semistable of slope zero and
\[
2d^2-10d+15-15m\ge0.
\]
Consequently $m=0$ for $d\le4$ and $m\le1$ for $d=5$; if $m>0$, then $E$ is strictly Bridgeland-semistable.
\item On the inner segment there is no tilt-semistable object of class $v_d+m\pt$ for any $m\ge0$, and no object of class $-v_d$ in the second-tilt heart.
\item In degree five, every outer tilt-semistable object of class $u_5=v_5+\pt$ is tilt-stable and is $I_{E_L}$ for a unique $L\in\Gr(2,V)$. Conversely, every $I_{E_L}[1]$ is stable at every outer parameter, and $I_{E_L}[1]\oplus\OO_p$ is polystable of class $-v_5$. Thus degree five is the first actual positive point-correction threshold.
\end{enumerate}
\end{mainresult}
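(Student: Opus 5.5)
The plan is to work entirely in the $(\alpha,\beta)$ tilt plane for $X=V_5$, using the Bogomolov--Gieseker inequality and the Li-type stronger inequality for $V_5$. I would first treat (i) and (iii), which are purely numerical, then (ii) through the standard structure theorem for the second tilt heart, and finally (iv) through a classification of slope-zero tilt-semistable objects of class $u_5$.

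\textbf{Step (i).} A tilt wall for $v_d$ in $\beta<0$ is a semicircle with centre on the $\beta$-axis. Its radius and centre are determined by $(r,c,q)$ of $v_d$ together with $(r',c',q')$ of a destabilising subobject, so the claim is independent of the third Chern character. I would parametrise walls by the slope of the destabilising factor. The key constraint is that both factors have $0<c'-\beta r'<-\beta$ along the wall and satisfy Bogomolov ($c'^2-2r'q'\ge0$). For $V_5$ I would also use the strengthened bound $q'/r'\le \tfrac12(c'/r')^2-\text{(correction)}$ near integral slopes, namely $\Delta\ge0$ with equality only for line bundles, together with $ch_2$-integrality $q\in\tfrac1{10}\ZZ$ after twisting. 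This yields a finite list of candidate walls for $2\le d\le5$, and I would check each against $W_d$; the walls coming from $\OO(-1)\to I_C$ should be the largest ones. I expect this case check to be the main obstacle. The step I would try first is to show that any wall strictly containing $W_d$ forces a factor of rank at least $2$ whose discriminant violates the $V_5$ stronger BG inequality. The rank-$0$ and rank-$1$ cases can be handled directly.

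\textbf{Steps (ii) and (iii).} For an object $E$ of class $-v_d$ in the second tilt heart along $\gamma_{d,s}$, the points of the path have $\nu_{\alpha,\beta}(v_d)=0$. The standard argument (cohomology with respect to the first tilt) gives $\mathcal H^{-1}(E)=F$ tilt-semistable of slope zero and $\mathcal H^0(E)=Q$ of slope $+\infty$. Because $\ch_{\le2}$ is forced, $Q$ has $\ch_{\le2}(Q)=0$, so $Q$ is $0$-dimensional of some length $m$; this gives $\ch(F)=v_d+m\pt$. No walls for $F$ exist outside $W_d$ by (i). Hence $F$ is tilt-semistable all the way to the terminal point $p_d=(a_d,-b_d)$, where I would apply the BMT generalized inequality, which is known for $V_5$ (Li). Evaluated at $p_d$, this inequality reads exactly $2d^2-10d+15-15m\ge0$. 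The conclusions $m=0$ for $d\le4$ and $m\le1$ for $d=5$ are then arithmetic ($d=5$ gives $15-15m\ge0$). When $m>0$ the point quotient $\OO_p$ has the same Bridgeland phase $1$-limit, which gives strict semistability.

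For (iii), the inner segment lies inside $W_d$ and beyond the point where BMT fails for all $m\ge0$: the same inequality evaluated at inner parameters becomes negative. Thus no tilt-semistable $F$ exists, and so no object of class $-v_d$ exists in the heart.

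\textbf{Step (iv).} For $d=5$ and $m=1$ the BMT inequality is an equality. I would first show that a tilt-semistable $F$ of class $u_5=(1,0,-1,1)$ is a torsion-free sheaf with $\Hom(\OO(-1)^{\oplus2},F)$ large. Here equality in BMT should force the defining morphism $\OO(-1)^2\to\OO$, via the sequence $0\to\OO(-2)\to\OO(-1)^2\to I_{E_L}\to0$ (Koszul, since $E_L$ is a complete intersection of two hyperplanes, an elliptic quintic). Uniqueness of $L=H^0(F(1))\subset V$ and stability would follow by checking that no subobject in the list from (i) destabilises $I_{E_L}$. Alternatively, since $I_{E_L}$ is the cone of a map between stable bundles, I would show it has no walls. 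The conversely statement is then that $I_{E_L}[1]$ and $\OO_p$ have equal phase at $s$-parameters on the outer segment, so their direct sum is polystable. Together with (ii), this identifies degree five as the first threshold.
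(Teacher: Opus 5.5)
Your treatment of (ii) follows the paper: transport $F$ to $p_d$ (no walls on the outer segment by (i)), apply the BMT inequality there, and read off $(2d^2-10d+15-15m)/75\ge0$. The genuine gap is in (iii).

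\textbf{The gap in (iii).} On $\gamma_{d,s}(b)$ the class $v_d+m\pt$ has
\[
c^\beta=b,\qquad e^\beta=\frac{d-1+m}{5}-\frac{bd}{5}+\frac{b^3}{6},\qquad \frac{\alpha^2}{6}c^\beta=\frac{b^3}{6}-\frac{bd}{15}.
\]
So BMT is equivalent to $b\ge\frac{3(d-1+m)}{2d}$.

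For $m\ge1$ this bound is at least $3/2\ge b_d$, so BMT does exclude those classes inside $W_d$. For $m=0$, however, the bound is $\frac34,1,\frac98,\frac65$ for $d=2,3,4,5$. Each of these lies below $\sqrt{2d/5}$, the inner endpoint of the path. Hence BMT holds strictly along the entire inner segment and excludes nothing. Since $m=0$ is exactly the case of genuine curve ideals, your claim that ``the inequality becomes negative at inner parameters'' is false, and (iii) does not follow from it.

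What you need is that $W_d$ is where $\OO_X(-1)$ (or its shift) changes sides of slope zero. The paper argues as follows. For $d=3,4,5$ put $A=\OO_X(-3)[1]$. On the inner segment $\nu(A)<0<\nu(\OO_X(-1))$, so tilt-stability of the line-bundle objects gives $\Hom(F,A)=0$. Serre duality gives $\Ext^2(F,A)\simeq\Hom(\OO_X(-1),F)^\vee=0$. Since $\Ext^i(F,A)=0$ for $i<0$ and $i\ge4$, this forces $\chi(F,A)\le0$. But Riemann--Roch gives $\chi(F,\OO_X(-3)[1])=6-d+m>0$. For $d=2$ the same argument with $A=\OO_X(-1)[1]$ and test object $\OO_X(1)$ gives $\chi(F,A)=1+m>0$.

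\textbf{Smaller issues in (i) and (iv).} In (i), the lattice $q\in\frac1{10}\ZZ$ is too coarse. For $(d,r,k)=(2,2,1)$ the forced window $\frac1{10}<q\le\frac14$ contains $q=\frac15$, with $\Delta=\frac15>0$, so neither Bogomolov nor your lattice removes it. The correct condition is integrality of $c_2$, namely $q\in\frac{k^2}{2}-\frac15\ZZ$. With it, ordinary Bogomolov gives $x>a_dr$ and $y>a_d(r-1)$ with $x+y=b_d$, hence the rank bound $(2r-1)a_d<b_d$. This leaves a finite list with no lattice points. No strengthened Bogomolov--Gieseker inequality is needed, and such inequalities are stated for sheaves rather than tilt-semistable objects anyway.

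In (iv), ``BMT equality forces the defining morphism'' is not a mechanism. Your idea of producing two sections does work, by a route different from the paper's:
\begin{enumerate}
\item Once vertical transport shows $F=I_Z$ is torsion free, transport to $p_5$ gives $H^2(F(1))\simeq\Hom(F,\OO_X(-3)[1])^\vee=0$ and $H^3(F(1))=0$.
\item Hence $h^0(F(1))\ge\chi(F(1))=2$.
\item Two independent hyperplanes through $Z$ give $I_{E_L}\subseteq I_Z$ with equal Chern characters, so $F=I_{E_L}$.
\end{enumerate}
This bypasses the paper's zero-remainder Jordan--H\"older classification at $p_5$, with factors $\OO_X(-1),\OO_X(-1),\OO_X(-2)[1]$ and its appeal to Vassiliev.

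Finally, absence of tilt walls gives only tilt-stability of $I_{E_L}$. Bridgeland stability of $I_{E_L}[1]$ at phase one also requires excluding skyscraper subobjects, i.e.\ $\Ext^1(\OO_p,I_{E_L})=0$ for all $p$. This holds because the linear section is Cohen--Macaulay, so its ideal has depth two. Without it, polystability of $I_{E_L}[1]\oplus\OO_p$ is not established.
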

\smallskip\noindent\textbf{Proof structure.}
Sections~\ref{sec:hearts}--\ref{sec:budget} establish the stability conventions and numerical bounds; Section~\ref{sec:actual} proves the corrected-factor classification. Section~\ref{sec:normal} constructs the inverse APR graph, Section~\ref{sec:det} the determinantal incidence model, and Section~\ref{sec:extensions} the rank-five extension bundle. Section~\ref{sec:ambient} constructs the principal morphism and its formal contraction, then computes the complete ambient deformation ring to prove Theorem~\ref{thm:mainambient} and finish Theorem~\ref{thm:mainconstruction}.

\section{Hearts, central charges, and phase-one objects}\label{sec:hearts}
Riemann--Roch is
\begin{equation}\label{eq:RR}
\chi(F)=r+\frac83c+5q+5e.
\end{equation}
Write
\[
c^\beta=c-\beta r,\qquad
q^\beta=q-\beta c+\frac{\beta^2}{2}r,\qquad
e^\beta=e-\beta q+\frac{\beta^2}{2}c-\frac{\beta^3}{6}r.
\]
We use the conventions for stability conditions, slicings, and locally finite slicings of \cite{Bridgeland}. The first-tilt heart is $\Coh^\beta(X)$ and the tilt slope is
\[
\nu_{\alpha,\beta}(F)=\frac{q^\beta(F)-\alpha^2r(F)/2}{c^\beta(F)}.
\]
Objects with denominator zero have slope $+\infty$. Tilt stability uses the inequality between the slope of a subobject and that of its quotient; this convention accommodates the zero-dimensional null category.

Let $\cF_{\alpha,\beta}$, respectively $\cT_{\alpha,\beta}$, consist of objects with tilt-Harder--Narasimhan slopes at most zero, respectively strictly positive, with the usual convention for the null category. The second-tilt heart is
\[
\cA_{\alpha,\beta}=\langle\cF_{\alpha,\beta}[1],\cT_{\alpha,\beta}\rangle.
\]
The central charge is
\begin{equation}\label{eq:central}
Z_{\alpha,\beta,s}(F)=-e^\beta(F)+\left(s+\frac16\right)\alpha^2c^\beta(F)
+i\left(q^\beta(F)-\frac{\alpha^2}{2}r(F)\right).
\end{equation}
For $X$ these are geometric stability conditions by the BMT construction and Li's inequality \cite{BMT,Li}. We use the standard discriminant inequality $\Delta(F)=c^2-2rq\ge0$ for tilt-semistable objects and the slope-zero consequence
\begin{equation}\label{eq:BMT}
e^\beta(F)\le\frac{\alpha^2}{6}c^\beta(F)
\qquad\text{if }\nu_{\alpha,\beta}(F)=0.
\end{equation}
We also use local finiteness of tilt walls, their nested structure, constancy in their complements, and large-$\alpha$ stability of slope-stable sheaves; see \cite[Proposition 3.2, Theorem 3.3, and Lemma 3.7]{Schmidt}. Line bundles, with the shift placing them in $\Coh^\beta(X)$, are tilt-stable; their discriminant is zero.

\smallskip\noindent\textbf{Moduli input.}\label{par:moduliexistence}
We use good moduli spaces in the sense of \cite[Definition 4.1 and Theorem 6.6]{AlperGMS}. The numerical lattice has finite rank. At rational $\beta,\alpha^2,s$, the charge is algebraic and the heart is noetherian with bounded standard amplitude \cite[Example 7.29]{AHLH}. Li's inequality places these conditions in the BMT component. Generic flatness and fixed-class boundedness follow from \cite[Corollary 4.21 and Proposition 4.12]{PT}; \cite[Theorem 1.5]{PT} gives the open finite-type semistable stack and semistable reduction. Its Harder--Narasimhan $\Theta$-stratification on the ind-completed heart yields a proper good moduli space by \cite[Theorem 7.27 and Example 7.29]{AHLH}.

For \cite[Theorems 4.12 and 4.16, Remark 4.17]{AHR}, the diagonal is affine: if $\mathcal E,\mathcal F$ are perfect families in the fixed heart over a noetherian base $T$, negative fiber Ext groups vanish. The perfect, base-change-compatible complex $R\pi_{T*}R\mathcal Hom(\mathcal E,\mathcal F)$ therefore has locally a finite free representative in nonnegative degrees. Relative Hom is the affine kernel of its first differential. Relative Isom is cut out in the product of the two relative Hom schemes by the inverse-composition equations, so it is affine. At a closed polystable object, the stabilizer is a product of general linear groups, linearly reductive over $\CC$. AHR thus identifies the completed good-moduli ring with the invariants of the equivariant miniversal ring constructed in Section~\ref{sec:ambient}.

\begin{lemma}\label{lem:phaseone}
Suppose $(\alpha,\beta)=(\sqrt{b^2-2d/5},-b)$, $b>\sqrt{2d/5}$, and $E\in\cA_{\alpha,\beta}$ has class $-v_d$. Then there is a short exact sequence in this heart
\begin{equation}\label{eq:decomp}
0\longrightarrow F[1]\longrightarrow E\longrightarrow Q\longrightarrow0,
\end{equation}
where $Q$ is zero-dimensional and $F$ is a nonzero tilt-semistable object of slope zero. If $m=\operatorname{length}(Q)$, then $\ch(F)=v_d+m\pt$. If $m>0$, $E$ is strictly semistable.
\end{lemma}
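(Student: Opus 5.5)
Along this path the central charge of $-v_d$ has imaginary part zero, so $E$ has phase one in $\cA_{\alpha,\beta}$. The plan is to read off the cohomology of $E$ with respect to the torsion pair $(\cT_{\alpha,\beta},\cF_{\alpha,\beta}[1])$ and force every piece to have tilt slope zero.

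\emph{Step 1: the central charge.} Compute $Z_{\alpha,\beta,s}(-v_d)$ at $\beta=-b$ and $\alpha^2=b^2-2d/5$. For $v_d=(1,0,-d/5,(d-1)/5)$,
\[
q^\beta(v_d)-\frac{\alpha^2}{2}=-\frac d5+\frac{b^2}{2}-\frac{b^2}{2}+\frac d5=0 .
\]
So $\Im Z(-v_d)=0$ and $Z(-v_d)\in\mathbb R$. One checks $\Re Z(-v_d)<0$, which will also follow from the decomposition below. Hence $E$ lies in $\mathcal P(1)$ exactly when it is semistable, and in any case $\Im Z$ vanishes on $E$.

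\emph{Step 2: the torsion-pair decomposition.} Write the canonical sequence $0\to H^{-1}(E)[1]\to E\to H^0(E)\to0$, where $H^{-1}(E)\in\cF_{\alpha,\beta}$ and $H^0(E)\in\cT_{\alpha,\beta}$. On $\cA_{\alpha,\beta}$ we have $\Im Z\ge0$, so both terms have $\Im Z=0$.

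For $T=H^0(E)\in\cT_{\alpha,\beta}$, the quantity $\Im Z(T)$ equals $\nu_{\alpha,\beta}\cdot c^\beta$ summed over the tilt-HN factors. Each factor has $\nu>0$ and $c^\beta\ge0$, so each term is $\ge0$ with equality only when $c^\beta=0$. Vanishing therefore forces every factor to have $c^\beta=0$, that is, to lie in the slope-$+\infty$ part of $\Coh^\beta$ with $\Im Z=0$. Such an object has $c^\beta=0$ and $q^\beta-\alpha^2r/2=0$, and the generalized BG inequality then pins it down as zero-dimensional. The precise argument: objects of $\Coh^\beta$ with $c^\beta=0$ are sheaves supported in dimension $\le1$ extended by shifts $G[1]$ with $\mu(G)=\beta$. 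Since $\beta$ is irrational in general, or at least $\ch_1^\beta(G)=0$ combined with $\Im Z=0$ forces $\ch_2=0$, only zero-dimensional $Q$ survive.

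For $F=H^{-1}(E)\in\cF_{\alpha,\beta}$, $\Im Z(F[1])=-\Im Z(F)$, and each factor of $F$ has $\nu\le0$. Vanishing forces all factors to have $\nu=0$. Thus $F$ is tilt-semistable of slope zero, and $F\ne0$ because $r(-v_d)=-1$ while $r(Q)=0$.

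\emph{Step 3: classes and strict semistability.} From $\ch(E)=-\ch(F)+\ch(Q)=-v_d$ we get $\ch(F)=v_d+\ch(Q)=v_d+m\pt$. If $m>0$, then $Q$ is a nonzero phase-one quotient of $E$ with $Z(Q)=-m/5<0$, and $F[1]$ is a proper subobject of the same phase. The only requirement is that $Z(F[1])$ be real negative, which is given by \eqref{eq:BMT}: it yields $-e^\beta(F)+(s+\tfrac16)\alpha^2c^\beta(F)>0$ with $c^\beta(F)=b>0$, so $Z(F)>0$. Hence $E$ is strictly semistable. In the paper's convention, in which $\cA$-semistability is tested on the ray, this is immediate once $Z(E)$, $Z(F[1])$ and $Z(Q)$ all lie on the negative real axis.

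\emph{Main obstacle.} The hardest point is the claim in Step 2 that $\Im Z=0$ objects of $\cT_{\alpha,\beta}$ are exactly the zero-dimensional sheaves. The difficulty is ruling out one-dimensional sheaves and objects $G[1]$ with $c^\beta(G)=0$. I would handle this by noting two facts. First, one-dimensional sheaves have $c^\beta=0$ and $q>0$, so $\Im Z=q>0$. Second, $G[1]$ with $\mu(G)=\beta$ and $\ch^\beta_1=0$ requires $\beta\in\mathbb Q$ together with BG-type inequalities that give $\Im Z(G[1])>0$ unless $\alpha=0$.
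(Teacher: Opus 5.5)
Your proposal is correct and is essentially the paper's proof. Split $E$ by the torsion pair $(\cT_{\alpha,\beta},\cF_{\alpha,\beta}[1])$ and use nonnegativity of $\Im Z$ on both pieces. This forces $Q$ into the zero-dimensional null category and every tilt-HN factor of $F$ to slope zero. Then $F[1]$, $E$ and $Q$ all have phase one, so $F[1]\subset E$ makes $E$ strictly semistable when $m>0$.

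Your Bogomolov argument excluding one-dimensional sheaves and shifts $G[1]$ with $\mu(G)=\beta$ spells out what the paper asserts in one line, and so does your direct check of $\Re Z(F[1])<0$ via \eqref{eq:BMT}. Drop the remark that $\beta$ is irrational in general: the lemma must cover rational $b$, and your Bogomolov argument already handles every $b$.
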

\begin{proof}
Take cohomology with respect to $\Coh^\beta(X)$. By the definition of the second tilt, $F=H^{-1}_{\Coh^\beta}(E)$ belongs to $\cF_{\alpha,\beta}$ and $Q=H^0_{\Coh^\beta}(E)$ belongs to $\cT_{\alpha,\beta}$. The imaginary part of $Z(E)$ is zero, and those of both terms in \eqref{eq:decomp} are nonnegative. They must vanish separately. A positive-slope term has strictly positive imaginary part unless it lies in the null category; that category consists of zero-dimensional sheaves. All nonzero HN factors of $F$ have slope zero. This proves the structural assertions and the Chern-character identity.

Every nonzero heart object with zero imaginary part has negative real central charge. Consequently $F[1]$, $E$, and every nonzero $Q$ have phase one. A heart object of phase one is semistable. If $Q\ne0$, the first term of \eqref{eq:decomp} is a nonzero proper subobject of the same phase, so $E$ is not stable.
\end{proof}

\begin{lemma}\label{lem:stablecriterion}
Let $F$ be tilt-stable of slope zero. If $\Hom(\OO_p,F[1])=0$ for all closed points $p\in X$, then $F[1]$ is Bridgeland-stable for every $s>0$.
\end{lemma}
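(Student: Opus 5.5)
The plan is to show that, at a parameter where $Z(F[1])$ is real and negative, $F[1]$ is a phase-one object whose only possible destabilizers are skyscraper sheaves. The hypothesis $\Hom(\OO_p,F[1])=0$ then rules those out.

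First, $F[1]$ lies in $\cA_{\alpha,\beta}$, since $F\in\cF_{\alpha,\beta}$ has slope zero. Its imaginary part vanishes, because $\nu_{\alpha,\beta}(F)=0$ means $q^\beta-\alpha^2r/2=0$. By the argument of Lemma~\ref{lem:phaseone}, $Z(F[1])$ is real and negative: the inequality \eqref{eq:BMT} together with $s>0$ gives $\operatorname{Re}Z(F)>0$ when $c^\beta(F)>0$. So $F[1]$ has phase one and is semistable.

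For stability, take a short exact sequence $0\to A\to F[1]\to B\to0$ in $\cA_{\alpha,\beta}$ with $A,B\ne0$. Phase one forces $\operatorname{Im}Z(A)=\operatorname{Im}Z(B)=0$, since both are nonnegative and sum to zero. The long exact sequence of $\Coh^\beta$-cohomology reads
\[
0\to H^{-1}(A)\to F\to H^{-1}(B)\to H^0(A)\to 0,\qquad H^0(B)=0,
\]
using $H^0(F[1])=0$. Here $H^0(A)\in\cT_{\alpha,\beta}$ has zero imaginary part, so it lies in the null category; it is a zero-dimensional sheaf $T$. Likewise $H^{-1}(A)$ and $H^{-1}(B)$ have only slope-zero tilt-HN factors, since their imaginary parts vanish and are nonpositive.

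The key step splits into two cases. Suppose first $T\ne0$. Then $T$ contains some $\OO_p$, and $\OO_p\subset T=H^0(A)$ lifts to a subobject of $A$ in $\cA$. Composing with $A\hookrightarrow F[1]$ gives a nonzero map $\OO_p\to F[1]$. More carefully, $\OO_p\to A$ is nonzero because $H^0$ is nonzero on it, and composition with a monomorphism in the heart stays nonzero. This contradicts the hypothesis, so $T=0$. In that case $A=H^{-1}(A)[1]$, $B=H^{-1}(B)[1]$, and $0\to H^{-1}(A)\to F\to H^{-1}(B)\to0$ is exact in $\Coh^\beta(X)$ with all terms of tilt slope zero. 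Tilt-stability of $F$ forces $H^{-1}(A)=0$ or $H^{-1}(B)=0$, and hence $A=0$ or $B=0$. The one point to handle with care is the null-category convention: a slope-zero subobject could a priori have $c^\beta=0$. But then $q^\beta-\alpha^2r/2=0$ with $c^\beta=0$ would make it a $+\infty$-slope object in the null category, which cannot embed into an object of $\cF$. So this case is excluded.

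Finally, the argument never used the value of $s$ beyond $\operatorname{Re}Z<0$ on nonzero phase-one objects. The $s$-independence is therefore automatic. The expected obstacle is the bookkeeping showing that the torsion part $H^0(A)$ really yields a nonzero morphism $\OO_p\to F[1]$; the approach I would take is the lifting-through-monomorphisms argument given in the key step above.
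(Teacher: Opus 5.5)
Your setup is fine: phase one via \eqref{eq:BMT}, the $\Coh^\beta$-cohomology sequence, the identification of $H^0(A)=T$ as zero-dimensional, and the case $T=0$. The case $T\neq0$, however, has a genuine gap.

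You claim that $\OO_p\subset T=H^0(A)$ lifts to a subobject of $A$. But $\OO_p$ is a subobject of a \emph{quotient} of $A$, and such maps need not lift. From the triangle $H^{-1}(A)[1]\to A\to T\to H^{-1}(A)[2]$, the obstruction to lifting $\OO_p\to T$ to $\OO_p\to A$ is its image in $\Hom(\OO_p,H^{-1}(A)[2])=\Ext^2(\OO_p,H^{-1}(A))$. By Serre duality this group is dual to $\Ext^1(H^{-1}(A),\OO_p)$, which is in general nonzero, for instance when $H^{-1}(A)$ is the ideal sheaf of a curve through $p$.

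Your ``lifting through monomorphisms'' remark only shows that the composite $\OO_p\to A\to F[1]$ is nonzero once a map $\OO_p\to A$ exists; it does not produce that map. Pulling back $A\to T$ along $\OO_p\hookrightarrow T$ does not help either. It gives a subobject of $A$ that is an extension of $\OO_p$ by $H^{-1}(A)[1]$, not $\OO_p$ itself. So the step where the hypothesis $\Hom(\OO_p,F[1])=0$ is invoked is unjustified whenever $H^{-1}(A)\neq0$.

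The fix is to apply tilt stability of $F$ before the Hom hypothesis, which is how the paper argues. Put $J=\im(F\to H^{-1}(B))$. It is a subobject of $H^{-1}(B)\in\cF_{\alpha,\beta}$ and has $q^\beta-\alpha^2r/2=0$. So if $J\neq0$, then $c^\beta(J)>0$ and $J$ has slope zero. Tilt stability applied to $0\to H^{-1}(A)\to F\to J\to0$ then rules out $H^{-1}(A)\neq0$ and $J\neq0$ holding simultaneously.

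If $J=0$, then $H^{-1}(B)\simeq T$ lies in $\cF_{\alpha,\beta}\cap\cT_{\alpha,\beta}=0$, so $B=0$. Hence, when $A$ and $B$ are both nonzero, $H^{-1}(A)=0$. Then $A=T$ is itself a zero-dimensional subobject of $F[1]$, and any skyscraper $\OO_p\subset T$ composes to a nonzero map $\OO_p\to F[1]$ with no lifting needed. This single argument also absorbs your case $T=0$. It shows that your problematic mixed case, $T\neq0\neq H^{-1}(A)$ with $B\neq0$, never occurs.
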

\begin{proof}
The object $F[1]$ is in $\cA_{\alpha,\beta}$ and has phase one. Consider a short exact sequence $0\to B\to F[1]\to C\to0$ with both nonzero terms of phase one. The first-tilt cohomology sequence has the form
\[
0\to F_B\to F\to F_C\to Q_B\to0,
\qquad H^0_{\Coh^\beta}(C)=0,
\]
where $F_B,F_C$ have slope zero when nonzero and $Q_B$ is zero-dimensional. Put $J=\im(F\to F_C)$. If $J\ne0$, it cannot be zero-dimensional because it is a subobject of $F_C\in\cF_{\alpha,\beta}$, and its tilt slope is zero. Thus tilt stability excludes $F_B\ne0$ and $J\ne0$ simultaneously. If $J=0$, then $F_C\simeq Q_B$ belongs to both torsion-pair classes, hence vanishes, and $C=0$. If $F_B=0$, then $B=Q_B$ is zero-dimensional. Every nonzero finite-length sheaf contains a skyscraper subsheaf, contradicting the assumed Hom vanishing. No proper subobject of phase one exists.
\end{proof}

\section{The terminal wall and exclusion of larger walls}\label{sec:walls}
This section proves the wall-exclusion statement in Theorem~\ref{thm:mainnumerical}(i) and the transport used for its correction bound.

Equating the tilt slopes of $v_d$ and $\OO_X(-1)$ gives $W_d$ of Theorem~\ref{thm:mainnumerical}. The identity
\[
\bd^2-\frac{2d}{5}=(\bd-1)^2=\ad^2
\]
shows that $p_d$ lies on \eqref{eq:path}. The four vertices are
\[
(1/10,-9/10),\quad(1/10,-11/10),\quad(3/10,-13/10),\quad(1/2,-3/2).
\]
Every nonvertical numerical tilt wall for the first three entries of $v_d$ has the equation
\begin{equation}\label{eq:wallS}
\alpha^2+(\beta+S)^2=S^2-\frac{2d}{5}.
\end{equation}
Its intersection with the negative zero-slope branch has $b=S$.

\begin{proposition}\label{prop:nolarger}
For $2\le d\le5$, no actual tilt wall in $\beta<0$ with finite-slope factors strictly encloses $W_d$, for any prescribed third Chern character.
\end{proposition}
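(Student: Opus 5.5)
The plan is a standard Bayer--Macrì--Toda style wall exclusion. The bound uses only the first three Chern characters, via Bogomolov--Gieseker and the rank and $c$ integrality of the factors; this is why the third Chern character plays no role.

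By the nested structure of walls in \cite[Theorem 3.3]{Schmidt}, every numerical wall for $(1,0,-d/5)$ in $\beta<0$ has the form \eqref{eq:wallS} with center $-S$. A wall strictly enclosing $W_d$ is one with $S>\bd$. Suppose such an actual wall exists. It is given by a destabilizing sequence $0\to A\to E\to B\to0$ in $\Coh^\beta(X)$ with $\ch_{\le2}(E)=(1,0,-d/5)$. Write $\ch_{\le2}(A)=(r,c,q)$ with $c,r\in\ZZ$ and $q\in\frac12\ZZ$ (or the appropriate lattice $q\in \frac{1}{10}\ZZ$ after normalization, using $H^3=5$ and integrality of $\ch_2\cdot H$). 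I would then impose the standard constraints along the whole wall.
\begin{enumerate}
\item Both $A$ and $B$ lie in $\Coh^\beta$ for every $\beta$ on the wall, so $0<c-\beta r<-\beta$ (using $c^\beta(E)=-\beta$ for rank one, $c=0$) for all $\beta$ in the interval $(-S-\sqrt{S^2-2d/5},\,-S+\sqrt{S^2-2d/5})$.
\item We have $\Delta(A)\ge0$, $\Delta(B)\ge0$, and the support property forces $\Delta(A)+\Delta(B)<\Delta(E)=2d/5$.
\item The wall equation itself, namely that $A$ has the same $\nu_{\alpha,\beta}$ as $E$ along \eqref{eq:wallS}, expresses $S$ linearly through $q$ and $c$: for $r\ne 0$ or $c\ne0$ it gives $S=(q+\tfrac{d}{5}r)/c$ or a similar formula.
\end{enumerate}

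Applying (i) at both endpoints gives the first restriction. At the left endpoint $\beta_-=-S-\sqrt{S^2-2d/5}$, the condition reads $c-\beta_- r>0$ and $c-\beta_- r<-\beta_-$. Combined with the right endpoint, this forces the chord length to be bounded in terms of $c$, roughly $2\sqrt{S^2-2d/5}\le$ (something involving $1/|r|$ or $1/r(r-1)$). Since $S>\bd$ gives $\sqrt{S^2-2d/5}>\ad=|\bd-1|$, we get explicit bounds. I would split into the cases $r\le0$, $r=1$, and $r\ge2$, and within each case bound $c$ to a few integer values. For each surviving pair $(r,c)$, (iii) writes $q$ in terms of $S$, and (ii) gives $c^2-2rq\ge0$ together with the companion inequality for $B$. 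Substituting $S>\bd$ should produce a contradiction. The four degrees $d=2,\dots,5$ are handled uniformly where possible, and by checking the finitely many lattice points otherwise.

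I expect the main obstacle to be the rank-zero and rank-one subobjects when $d$ is small. For $d=2$, $\ad=0.1$ is tiny, so the chord of $W_d$ is short and the containment conditions in (i) are weak. For example, $A=\OO_X(-1)$ realizes $W_d$ itself with $S=\bd$, and one must show that nothing with $S>\bd$ survives. For this case I would use the observation that a rank-one torsion-free $A\subset E$ with $c(A)=-n$ gives $S$ increasing in $q(A)$. Bogomolov, together with the constraint on $B$, then caps $q$ so that $S\le\bd$, with equality only for $n=1$. Finiteness-slope factors exclude the vertical wall, so only these nested semicircles need to be handled.
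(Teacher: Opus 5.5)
Your route is correct, but it differs from the paper's at the decisive step.

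\textbf{The paper's route.} The paper uses membership in $\Coh^\beta$ only at the single interior point $\beta=-\bd$ of the larger wall. For the factor $A=(r,-k,q)$ with $r\ge1$, this gives $\bd(r-1)<k<\bd r$. Bogomolov for both factors then yields $x>\ad r$ and $y>\ad(r-1)$. The paper uses these two inequalities only through their sum $(2r-1)\ad<\bd$. It then clears the surviving rows of Table~\ref{tab:larger} with $\Delta(A)\ge0$ and the lattice $q\in\frac{k^2}{2}-\frac{1}{5}\ZZ$, and handles $r=1$ by Bogomolov again.

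\textbf{Your route.} You invoke the standard fact, from the wall structure theorem, that one fixed destabilizing sequence stays in $\Coh^\beta$ along the whole semicircle. Hence $c^\beta(A)$ and $c^\beta(B)$ are nonnegative at both endpoints $\beta_\pm=-S\pm\sqrt{S^2-2d/5}$. You leave the rest as a sketch, but this constraint finishes the proof by itself. Since $-\beta_+$ decreases and $-\beta_-$ increases in $S$, the hypothesis $S>\bd$ gives $-\beta_+<\bd-\ad$ and $-\beta_->\bd+\ad$. Therefore
\[
(\bd+\ad)(r-1)<k<(\bd-\ad)\,r\quad(r\ge2),\qquad 0<k<\bd-\ad\quad(r=1).
\]
For $d=2,3,4,5$ one has $(\bd-\ad,\bd+\ad)=(\tfrac{4}{5},1),(1,\tfrac{6}{5}),(1,\tfrac{8}{5}),(1,2)$. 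So in every case $r-1<k<r$, which no integer satisfies.

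Consequences:
\begin{itemize}
\item Neither $\Delta(A)+\Delta(B)<\Delta(E)$ nor any lattice check is needed.
\item The rank-one case you expected to be the main obstacle is immediate.
\item These endpoint bounds are exactly the paper's $x>\ad r$ and $y>\ad(r-1)$. Once positivity holds at one interior point, Bogomolov for a factor, with $q$ eliminated by the wall equation, is equivalent to nonnegativity of its $c^\beta$ at the relevant endpoint.
\end{itemize}
So the two arguments have the same numerical content. Used individually rather than summed, the paper's two inequalities would already make its table unnecessary. Your formulation makes integrality of $c_1$ alone decisive. The price is that you use persistence of the sequence along the entire wall, not just at one point.

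\textbf{Slips to correct.}
\begin{itemize}
\item The wall equation gives $S=-(q+\frac{d}{5}r)/c$, i.e.\ $q=kS-\frac{dr}{5}$ for $c=-k$.
\item The $\ch_2$-lattice is $q\in\frac{c^2}{2}-\frac{1}{5}\ZZ$, not $\frac{1}{2}\ZZ$.
\item The endpoint inequalities are non-strict.
\item The case $r\le0$ reduces to $r\ge1$ by exchanging the factors, since the ranks sum to one.
\item For $d=2$ the chord of $W_2$ is $[-1,-\frac{4}{5}]$, so $\OO_X(-1)\notin\Coh^\beta$ along it. Numerically $W_2$ is cut out by $\OO_X(-1)[1]$ together with the rank-two class $(2,-1,\frac{1}{10})$.
\end{itemize}
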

\begin{proof}
Suppose a larger wall exists, so $S>\bd$. Its intersection with $\beta=-\bd$ lies at $\alpha>0$. By the structure theorem for tilt walls, we can use a semistable decomposition there. After exchanging its two terms, write their first three Chern characters as
\[
A=(r,-k,q),\qquad B=(1-r,k,-d/5-q),\qquad r\ge1.
\]
Both denominators are positive, hence
\begin{equation}\label{eq:kinterval}
\bd(r-1)<k<\bd r.
\end{equation}
In particular $k>0$. The wall equation gives
\begin{equation}\label{eq:qS}
q=kS-\frac{dr}{5}>k\bd-\frac{dr}{5}.
\end{equation}
If $r\ge2$, put $x=\bd r-k$ and $y=k-\bd(r-1)$. Then $x,y>0$, $x+y=\bd$, and
\begin{align*}
\Delta(A)&=x^2-\ad^2r^2-2rk(S-\bd),\\
\Delta(B)&=y^2-\ad^2(r-1)^2-2(r-1)k(S-\bd).
\end{align*}
The two Bogomolov inequalities imply $x>\ad r$, $y>\ad(r-1)$, and consequently
\begin{equation}\label{eq:rankbound}
(2r-1)\ad<\bd.
\end{equation}
This leaves exactly the rows of Table~\ref{tab:larger}. In each row the necessary inequalities for $q$ are
\[
k\bd-\frac{dr}{5}<q\le\frac{k^2}{2r},
\qquad q\in\frac{k^2}{2}-\frac15\ZZ.
\]
The lattice condition is the integrality of $c_2(A)$, since $H^2$ is five times the integral degree-four generator. The intervals contain no such lattice point: for $d=2$ the lower endpoint belongs to the lattice and the next lattice point is already larger than the upper endpoint; in the other rows the strict lower and weak upper bounds coincide. For $d=5$, \eqref{eq:rankbound} leaves no rank $r\ge2$.

Finally, if $r=1$, \eqref{eq:kinterval} becomes $0<k<\bd$. There is no integer $k$ for $d=2$. For $d=3,4,5$ the only possibility is $k=1$, and $\Delta(A)\ge0$ gives $q\le1/2$. Thus $S=q+d/5\le\bd$, again a contradiction. Rank-zero terms were included as the complementary term in this case. A term in the zero-dimensional null category does not define a finite-slope semicircular wall and does not affect this argument.
\end{proof}

\begin{table}[ht]
\centering
\caption{All higher-rank possibilities for a strictly enclosing wall.}\label{tab:larger}
\begin{tabular}{ccccc}
\toprule
$d$&$r$&$k$&strict lower bound for $q$&weak upper bound for $q$\\
\midrule
2&2&1&$1/10$&$1/4$\\
2&3&2&$3/5$&$2/3$\\
2&4&3&$11/10$&$9/8$\\
3&2&2&$1$&$1$\\
3&3&3&$3/2$&$3/2$\\
3&4&4&$2$&$2$\\
3&5&5&$5/2$&$5/2$\\
4&2&2&$1$&$1$\\
\bottomrule
\end{tabular}
\end{table}

\begin{corollary}\label{cor:transport}
A tilt-semistable object on the outer segment whose first three entries are $(1,0,-d/5)$ remains tilt-semistable when transported to $p_d$. There are no actual tilt walls on the open outer segment.
\end{corollary}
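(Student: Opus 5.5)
The argument combines Proposition~\ref{prop:nolarger} with the nested, locally finite structure of tilt walls \cite{Schmidt}. First locate the walls met by the zero-slope path. Every numerical wall for the first three entries $(1,0,-d/5)$ has the form \eqref{eq:wallS}, with center $-S$. It meets the negative zero-slope branch \eqref{eq:path} exactly at $b=S$. By the identity $\bd^2-2d/5=\ad^2$, the wall with $S=\bd$ is $W_d$, and it meets the path at $p_d$. The walls for this class form a nested family of semicircles, and a larger $S$ means a larger enclosing semicircle. A wall meeting the open outer segment $b>\bd$ therefore has $S>\bd$ and strictly encloses $W_d$. By Proposition~\ref{prop:nolarger} no such wall is actual, whatever the third Chern character. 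This proves the second sentence.

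For the transport statement, let $F$ be tilt-semistable at a point $(\sqrt{b^2-2d/5},-b)$ with $b>\bd$. The region between $W_d$ and arbitrarily large walls, together with the outer segment and the endpoint $p_d$, lies in one connected set containing no actual wall other than possibly $W_d$ itself. By constancy of semistability in the complement of walls \cite[Theorem 3.3]{Schmidt}, $F$ stays tilt-semistable along the whole outer segment. At the endpoint $p_d$ we use that semistability is a closed condition in $(\alpha,\beta)$. Concretely, suppose $F$ were destabilized at $p_d$ by a subobject with strictly larger slope. The set where this inequality holds is an open condition on $(\alpha,\beta)$, so it persists at nearby outer points. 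Along that piece of the path the subobject still lies in $\Coh^\beta(X)$, and this contradicts semistability there. Equivalently, any destabilizing wall through $p_d$ must be $W_d$ itself. Along $W_d$ the slopes are only equal, which preserves semistability.

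The main obstacle is rigor at the boundary point $p_d$. We need the destabilizing subobject to lie in $\Coh^{\beta}(X)$ for all $\beta$ near $-\bd$, and the finite-slope condition must be used. Zero-dimensional subobjects have infinite slope, but they do not exist inside $F$ when $F$ is torsion-free in the relevant sense, and they give no semicircular wall, as noted at the end of Proposition~\ref{prop:nolarger}. To handle this, I would invoke local finiteness of walls near $p_d$, so that only finitely many numerical walls are relevant in a neighborhood. Then I would use the standard fact that tilt-semistable objects remain semistable on the closure of a chamber.
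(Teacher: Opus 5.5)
Your proposal is correct and follows the paper's proof: walls meeting the open outer segment have $S=b>\bd$ and are excluded by Proposition~\ref{prop:nolarger}, and local finiteness, constancy off walls, and closedness of semistability at $p_d$ then give the transport (the paper also notes that the only vertical wall lies at $\beta=0$, off the path, which your proposal omits but which is harmless). Your intermediate ``concrete'' argument, that a destabilizing subobject at $p_d$ remains a subobject in nearby hearts, is not justified as stated: a subobject whose $H^{-1}$ has a $\mu$-semistable factor of slope exactly $-\bd$ leaves $\Coh^\beta(X)$ for $\beta<-\bd$. Your fallback to the standard closedness of semistability is exactly what the paper uses, and it suffices.
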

\begin{proof}
Any wall meeting that segment has $S=b>\bd$ in \eqref{eq:wallS}, and hence is excluded by Proposition~\ref{prop:nolarger}. There is no vertical wall with $\beta<0$. Apply local finiteness, constancy away from walls, and closedness of semistability at the endpoint.
\end{proof}

\section{The correction budget and inner emptiness}\label{sec:budget}
This section proves Theorem~\ref{thm:mainnumerical}(ii)--(iii): the correction budget distinguishes degree five from the lower degrees, and the inner segment is empty.

At $p_d$ let $T=(r,c,q,e)$ have finite tilt slope zero. Then
\begin{equation}\label{eq:terminalrelations}
x=c+\bd r>0,\qquad q=-\bd c-\frac d5r,
\qquad \Delta(T)=x^2-\ad^2r^2.
\end{equation}
Define the additive linear functional
\begin{equation}\label{eq:remainder}
D_d(T)=\frac{\ad^2}{6}(c+\bd r)-e^{-\bd}(T).
\end{equation}
It is nonnegative for tilt-semistable $T$ of slope zero by \eqref{eq:BMT}.

\begin{proposition}\label{prop:budget}
For slope-zero classes at $p_d$,
\begin{equation}\label{eq:RRbudget}
D_d(T)=\frac{(2d^2-10d+15)(c+r)-15\chi(T)}{75}.
\end{equation}
Consequently
\begin{equation}\label{eq:budget}
D_d(v_d+m\pt)=\frac{2d^2-10d+15-15m}{75}.
\end{equation}
If a slope-zero tilt-semistable object admits a finite filtration with slope-zero tilt-semistable factors $T_i$, then $D_d(F)=\sum_iD_d(T_i)$. In particular, $D_d(F)=0$ implies $D_d(T_i)=0$ for all $i$.
\end{proposition}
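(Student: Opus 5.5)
The identity \eqref{eq:RRbudget} is an identity between linear functionals on the hyperplane of slope-zero classes at $p_d$, so I will verify it by direct substitution. Using \eqref{eq:terminalrelations}, I eliminate $q$ via $q=-\bd c-\frac d5 r$. I then expand
\[
e^{-\bd}(T)=e+\bd q+\frac{\bd^2}{2}c+\frac{\bd^3}{6}r
\]
and \eqref{eq:RR} in the same variables $(r,c,e)$. Both sides of \eqref{eq:RRbudget} are affine-linear in $e$ with coefficient $-1$: on the left from $-e^{-\bd}$, on the right from $-15\cdot 5e/75=-e$. Hence only the coefficients of $r$ and $c$ need to be compared. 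Each comparison is a polynomial identity in $d$ after inserting $\bd=\frac12+\frac d5$ and $\ad^2=\bd^2-\frac{2d}{5}$.

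For the coefficient of $c$, the left side is
\[
\frac{\ad^2}{6}-\bd^2+\frac{\bd^2}{2}=\frac{\ad^2}{6}-\frac{\bd^2}{2}.
\]
The right side is $\frac{(2d^2-10d+15)-40}{75}$. I will check equality by expanding both as quadratics in $d$. For the coefficient of $r$, the left side is
\[
\frac{\ad^2\bd}{6}+\frac{d\bd}{5}-\frac{\bd^3}{6}.
\]
The right side is $\frac{(2d^2-10d+15)-15+15d}{75}$, using $\chi$'s $r$-coefficient $1$ and the $q$-contribution $5q\mapsto -d r$. Here I will use $\ad^2-\bd^2=-\frac{2d}{5}$ to simplify the left side to $\frac{d\bd}{5}-\frac{d\bd}{15}$, which is linear times $\bd$ and hence a quadratic in $d$. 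This bookkeeping is the only real obstacle, and it is routine. A sanity check is to evaluate at one or two vertices, say $d=5$ with $\bd=3/2$ and $\ad=1/2$, before trusting the general expansion.

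For \eqref{eq:budget}, I plug in $T=v_d+m\pt$. Here $r=1$, $c=0$, so $c+r=1$. By Hilbert polynomial reasoning, $\chi(I_C)=\chi(\OO_X)-\chi(\OO_C)=1-1=0$, consistent with \eqref{eq:RR}: $1-d+(d-1)=0$. Adding $m\pt$ adds $5\cdot\frac m5=m$ to $\chi$. Thus $\chi(T)=m$, which gives the formula.

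Additivity holds because $D_d$ is linear in $\ch$, and $\ch$ is additive in filtrations. Also, the slope-zero condition at $p_d$ is linear, so every factor $T_i$ lies in the domain where \eqref{eq:remainder} is defined. Each $T_i$ is tilt-semistable of slope zero, so $D_d(T_i)\ge0$ by \eqref{eq:BMT}, applied at $\alpha=\ad$, $\beta=-\bd$, where $c^{-\bd}=c+\bd r$. A sum of nonnegative terms vanishes only if each term vanishes, which gives the final claim.
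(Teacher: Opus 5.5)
Your strategy is the same as the paper's. You substitute the terminal relations into $D_d$ and compare with Riemann--Roch, evaluate at $v_d+m\pt$ (where $c+r=1$ and $\chi=m$), and get additivity from linearity of $\ch$ and nonnegativity from \eqref{eq:BMT} at $(\ad,-\bd)$. Your $e$- and $r$-coefficient comparisons are correct, as are the derivation of \eqref{eq:budget} and the final additivity/nonnegativity argument.

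However, both of your displayed $c$-coefficients are wrong, and they do not agree, so the check fails as written.

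On the left, $-e^{-\bd}=-e-\bd q-\frac{\bd^2}{2}c-\frac{\bd^3}{6}r$ with $q=-\bd c-\frac d5 r$ contributes $+\bd^2c-\frac{\bd^2}{2}c$. The coefficient is therefore $\frac{\ad^2}{6}+\frac{\bd^2}{2}$, not $\frac{\ad^2}{6}-\frac{\bd^2}{2}$. You flipped the sign of the $-e^{-\bd}$ contribution here, although you have it right for $r$.

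On the right, eliminating $q$ also changes the $c$-coefficient of $\chi$, not only its $r$-coefficient. Since $5q=-5\bd c-dr$, one gets $\chi(T)=(1-d)r+(\frac16-d)c+5e$. The right-hand $c$-coefficient is then $\frac{2d^2-10d+15-40+75\bd}{75}$, not $\frac{2d^2-10d+15-40}{75}$.

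With these corrections both sides equal $\frac{4d^2+10d+25}{150}$, which is $\frac76$ at $d=5$. Your displayed expressions instead give $-\frac{13}{12}$ and $-\frac13$ at $d=5$, so the vertex sanity check you proposed would have caught the slip. Once the $c$-coefficients are fixed, your argument proves the proposition exactly as the paper does.
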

\begin{proof}
Insert \eqref{eq:terminalrelations} in \eqref{eq:remainder} and use \eqref{eq:RR}. For $v_d+m\pt$ we have $c+r=1$ and $\chi=m$, proving \eqref{eq:budget}. Additivity follows from the Chern character; nonnegativity of each summand follows from \eqref{eq:BMT}.
\end{proof}
The finite filtrations used below are constructed in the genuine phase-one categories of Section~\ref{sec:kronecker}.

\begin{corollary}\label{cor:budgetouter}
The correction length in Lemma~\ref{lem:phaseone} is zero for $d=2,3,4$ and at most one for $d=5$. For $d=5$, a terminal corrected object of class $v_5+\pt$ has zero total remainder.
\end{corollary}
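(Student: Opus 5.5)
The plan is to combine Lemma~\ref{lem:phaseone}, Corollary~\ref{cor:transport}, and Proposition~\ref{prop:budget}. Let $E$ be an object of class $-v_d$ in the second-tilt heart at an outer parameter. Lemma~\ref{lem:phaseone} gives the decomposition \eqref{eq:decomp}. In it, $F$ is nonzero and tilt-semistable of slope zero, with $\ch(F)=v_d+m\pt$ and $m=\operatorname{length}(Q)$. The first three entries of $\ch(F)$ are $(1,0,-d/5)$. By Corollary~\ref{cor:transport}, $F$ therefore remains tilt-semistable when transported along the path to the terminal point $p_d$, and its slope there is still zero because $p_d$ lies on the same zero-slope branch.

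At $p_d$, inequality \eqref{eq:BMT} holds for $F$, so $D_d(F)\ge0$. By \eqref{eq:budget} this reads $2d^2-10d+15-15m\ge0$. Evaluating the left side gives $3$ for $d=2$, $3$ for $d=3$, $7$ for $d=4$ and $15$ for $d=5$. Since $m$ is a nonnegative integer, we get $m=0$ for $d=2,3,4$ and $m\le1$ for $d=5$. For $d=5$ and $m=1$ the bound is an equality, so $D_5(F)=0$. This is the ``zero total remainder'' claim for a terminal corrected object of class $v_5+\pt$.

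The only delicate point is checking that \eqref{eq:BMT} may be applied at $p_d$ itself, which is a boundary point of the outer segment.
\begin{itemize}
\item The BMT inequality is closed, so it persists at $p_d$ once semistability persists; the latter is the closedness of semistability invoked in Corollary~\ref{cor:transport}.
\item The point $p_d$ has $\alpha=\ad>0$ for every $2\le d\le5$, so it is an honest tilt parameter rather than a degenerate one.
\end{itemize}
If needed, one can instead apply \eqref{eq:BMT} at outer points approaching $p_d$ and pass to the limit, since both sides of the inequality are continuous in $(\alpha,\beta)$.
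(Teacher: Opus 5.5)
Your proposal is correct and follows the paper's own proof: transport $F$ to $p_d$ by Corollary~\ref{cor:transport}, apply \eqref{eq:BMT} there, and read off the bound $2d^2-10d+15-15m\ge0$ from \eqref{eq:budget}. Your evaluation of the polynomial as $3,3,7,15$ and your observation that $m=1$ forces $D_5(F)=0$ spell out the arithmetic that the paper leaves implicit.
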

\begin{proof}
Transport $F$ to $p_d$ using Corollary~\ref{cor:transport}, apply \eqref{eq:BMT}, and use \eqref{eq:budget}.
\end{proof}
\begin{proposition}\label{prop:inner}
For $2\le d\le5$ and $\sqrt{2d/5}<b<\bd$, no tilt-semistable object of class $v_d+m\pt$ exists for any $m\ge0$.
\end{proposition}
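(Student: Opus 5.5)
The plan is to work directly with the BMT inequality, but to use it at more than the single point of the path. Fix $b_0$ with $\sqrt{2d/5}<b_0<\bd$, and suppose $F$ is tilt-semistable of class $v_d+m\pt$ at $(\alpha_0,\beta_0)=(\sqrt{b_0^2-2d/5},-b_0)$.

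First I would record why the slope-zero inequality \eqref{eq:BMT} at that point is not enough. For $v_d+m\pt$ we have $c^\beta=b_0$ and $e^{-b_0}=\frac{d-1+m}{5}-\frac{b_0d}{5}+\frac{b_0^3}{6}$. So \eqref{eq:BMT} only gives $b_0\ge 3(d-1+m)/(2d)$. For $d=5$ this is $b_0\ge 6/5$, which is compatible with the whole inner segment. Hence the statement has to use semistability along a whole wall.

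\textbf{Step 1 (spreading semistability).} The point $(\alpha_0,\beta_0)$ lies on the numerical wall \eqref{eq:wallS} with $S=b_0$. By the nested-wall structure and constancy off actual walls \cite{Schmidt}, I would show that $F$ stays tilt-semistable along the whole semicircle $\alpha^2+(\beta+b_0)^2=b_0^2-2d/5$, with $\beta<0$. If that semicircle is itself an actual wall, the semistable factors have proportional central charges along it, and each factor is semistable on all of it. Either way $F$, or each of its factors, is semistable at every point of the semicircle. This includes points arbitrarily close to the endpoints, where $\alpha\to0$.

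\textbf{Step 2 (generalized BMT on the semicircle).} At a general point, tilt-semistable objects satisfy the BMT/Li inequality
\[
\alpha^2\Delta(F)+4\bigl(q^\beta(F)\bigr)^2-6c^\beta(F)e^\beta(F)\ge0,
\]
of which \eqref{eq:BMT} is the slope-zero case. Along the semicircle I would parametrize by $\beta$, substitute $\alpha^2=b_0^2-2d/5-(\beta+b_0)^2$, and obtain an explicit cubic in $\beta$ with coefficients depending on $d,m,b_0$. Increasing $m$ raises $e$ and hence $e^\beta$, which only worsens the inequality. So it suffices to treat $m=0$, keeping $m$ as a parameter to check monotonicity.

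I would then evaluate the cubic at the inner endpoint $\beta\to-b_0+\sqrt{b_0^2-2d/5}$, or at its minimum on the arc, and show that it is negative exactly when $b_0<\bd$. The sanity check is that at $b_0=\bd$ the extremal configuration is the destabilizing sequence with $\OO_X(-1)$ defining $W_d$. The case analysis over $d=2,\dots,5$ is a finite computation.

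\textbf{Fallback if Step 2 fails.} If the polynomial estimate turns out not to be negative on the whole inner range, I would instead decompose $F$ at the point where the semicircle is tangent to, or crosses, a region where rank-one torsion-free objects must be destabilized. I would then apply Proposition~\ref{prop:nolarger}-style lattice bookkeeping, as in Table~\ref{tab:larger}, to the possible factors.

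\textbf{Heart assertion.} The final claim, that no object of class $-v_d$ lies in the second-tilt heart, then follows from Lemma~\ref{lem:phaseone}. Any such object has $H^{-1}_{\Coh^\beta}$ equal to a nonzero tilt-semistable $F$ of class $v_d+m\pt$ with $m\ge0$, which Steps 1–2 exclude.

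\textbf{Main obstacle.} I expect the hardest step to be Step 1: justifying semistability along the full semicircle, including on actual walls and near its endpoints, from semistability at one point. The second difficulty is choosing the evaluation point in Step 2 so that the inequality fails precisely on $b_0<\bd$ rather than on a smaller range.
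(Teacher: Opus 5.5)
\textbf{There is a genuine gap: Step~2 cannot produce a contradiction.} The reason is structural, not a matter of choosing the evaluation point. For $v=v_d+m\pt$ put $e=(d-1+m)/5$. Substitute $c^\beta=-\beta$, $q^\beta=\beta^2/2-d/5$, $e^\beta=e+\beta d/5-\beta^3/6$ and $\Delta=2d/5$. The quartic terms cancel, and the BMT--Li expression becomes
\[
\alpha^2\Delta+4(q^\beta)^2-6c^\beta e^\beta
=\frac{2d}{5}\Bigl(\alpha^2+(\beta+S^*)^2-\bigl(S^{*2}-\tfrac{2d}{5}\bigr)\Bigr),
\qquad S^*=\frac{3(d-1+m)}{2d}.
\]
So the region where the inequality fails is the interior of the numerical wall \eqref{eq:wallS} with $S=S^*$. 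Since numerical walls for this class are nested, the arc with $S=b_0$ lies entirely on one side of that region. Indeed, on that arc the expression equals $\beta\,(6e-\tfrac{4d}{5}b_0)$, whose sign is constant. Spreading semistability along the arc (Step~1) therefore gives exactly the slope-zero condition $b_0\ge S^*$ you already computed at the top point, and nothing more.

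Worse, in the critical case $m=0$, which you correctly identified as the hardest, one has $S^{*2}<2d/5$ for every $2\le d\le5$; for example $1.44<2$ when $d=5$. So the forbidden region is empty: class $v_d$ satisfies the BMT--Li inequality at every point with $\alpha>0$, $\beta<0$. No evaluation point, endpoint limit, or path can yield a contradiction from this inequality. Your fallback does not replace it. Inside $W_d$ there is no wall to which Table~\ref{tab:larger}-style bookkeeping applies, because the claim is emptiness, not destabilization. Your sanity check about $\OO_X(-1)$ at $b_0=b_d$ is also invisible to this inequality.

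\textbf{The missing idea is Hom-vanishing against line-bundle objects, combined with Serre duality and Riemann--Roch.} Take $A=\OO_X(-1)[1]$ for $d=2$ and $A=\OO_X(-3)[1]$ for $d=3,4,5$. At the inner point $\nu(A)<0$, so $\Hom(F,A)=0$. Serre duality identifies $\Ext^2(F,A)$ with the dual of $\Hom(\OO_X(1),F)$, respectively $\Hom(\OO_X(-1),F)$. These vanish because the line bundle has positive tilt slope there. This is where $b<b_d$ enters: for instance $\nu(\OO_X(-1))=(b_d-b)/(b-1)$, and for $d=2$ the condition enters through $\nu(A)=(b-9/10)/(1-b)<0$. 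The amplitude of $F$ and the common heart kill negative Ext groups and $\Ext^{\ge4}$. Hence
\[
\chi(F,A)=-\dim\Ext^1(F,A)-\dim\Ext^3(F,A)\le0.
\]
Riemann--Roch, however, gives $\chi(F,A)=d-1+m$, respectively $6-d+m$, which is positive. Your derivation of the heart assertion from Lemma~\ref{lem:phaseone} is fine once this emptiness is established.
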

\begin{proof}
Suppose $F$ is such an object. Its tilt slope is zero. For $d=2$ take $A=\OO_X(-1)[1]\in\Coh^{-b}(X)$. Then
\[
\nu(A)=\frac{b-9/10}{1-b}<0,\qquad
\nu(\OO_X(1))=\frac{b+9/10}{b+1}>0.
\]
Tilt-stability of the line-bundle objects gives $\Hom(F,A)=0$ and $\Hom(\OO_X(1),F)=0$. Serre duality gives
\[
\Ext^2(F,A)\simeq\Hom(A,F(-2)[1])^\vee
\simeq\Hom(\OO_X(1),F)^\vee=0.
\]

For $d=3,4,5$ take $A=\OO_X(-3)[1]\in\Coh^{-b}(X)$. Here $1<b<\bd\le3/2$, and
\[
\nu(A)=\frac{3b-9/2-d/5}{3-b}<0,\qquad
\nu(\OO_X(-1))=\frac{\bd-b}{b-1}>0.
\]
The same argument gives $\Hom(F,A)=0$ and
\[
\Ext^2(F,A)\simeq\Hom(\OO_X(-1),F)^\vee=0.
\]
Because $F$ has coherent-sheaf amplitude $[-1,0]$ and $A$ is a line bundle shifted by one, $\Ext^i(F,A)=0$ for $i\ge4$. Negative Ext groups vanish in the common heart. Therefore $\chi(F,A)=-\dim\Ext^1(F,A)-\dim\Ext^3(F,A)\le0$. On the other hand, Riemann--Roch gives
\[
\chi(F,\OO_X(-1)[1])=d-1+m,\qquad
\chi(F,\OO_X(-3)[1])=6-d+m.
\]
The relevant values are $1+m,3+m,2+m,1+m$, all positive, a contradiction.
\end{proof}
Lemma~\ref{lem:phaseone} now implies the second-tilt-heart assertion in Theorem~\ref{thm:mainnumerical}(iii).

\section{The corrected degree-five factors}\label{sec:actual}\label{sec:kronecker}
We construct and exhaust the corrected factors of Theorem~\ref{thm:mainnumerical}(iv).

Put
\[
u_5=v_5+\pt=(1,0,-1,1),\qquad A=\OO_X(-1),\qquad B=\OO_X(-2)[1].
\]
Then $u_5=2\ch(A)+\ch(B)$, and both $A$ and $B$ have slope zero and zero remainder at $p_5$. Moreover
\begin{equation}\label{eq:sevenarrows}
\Ext^1(B,A)=H^0(X,\OO_X(H))=V,\qquad \Ext^1(A,B)=0.
\end{equation}
Proposition~\ref{prop:d5zerofactors} proves that these two objects exhaust the stable zero-remainder factors of $u_5$.

\begin{proposition}\label{prop:CI}
For every $L\in\Gr(2,V)$ the base locus $E_L$ is a local complete intersection curve with fundamental class $H^2$ and arithmetic genus one. Its ideal has resolution
\begin{equation}\label{eq:koszul}
0\longrightarrow\OO_X(-2)\longrightarrow\OO_X(-1)^{\oplus2}
\longrightarrow I_{E_L}\longrightarrow0.
\end{equation}
For every outer parameter on $\gamma_{5,s}$, $I_{E_L}[1]$ is Bridgeland-stable.
\end{proposition}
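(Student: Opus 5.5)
The proof splits into two parts: the geometry of $E_L$, then stability of $I_{E_L}[1]$.

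\textbf{Geometry of $E_L$.} Choose a basis $s_1,s_2$ of $L$. I would first show that the two hyperplane sections $\{s_1=0\}$ and $\{s_2=0\}$ meet in codimension two for every $L$. This holds because $X\subset\PP^6$ is an intersection of quadrics containing no planes and of degree five. If some pencil of hyperplane sections had a common surface component, that component would have degree at most... more concretely, every hyperplane section of $V_5$ is an irreducible reduced surface, since $\operatorname{Pic}X=\ZZ H$ and $H$ is primitive. Two distinct members of $|H|$ therefore share no component, so $E_L=\{s_1=s_2=0\}$ is a complete intersection of two Cartier divisors of codimension two. Consequently $E_L$ is lci, and the Koszul complex is exact, giving \eqref{eq:koszul}. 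The fundamental class is $H\cdot H=H^2$, so the degree is $5$. Adjunction gives $\omega_{E_L}=(K_X+2H)|_{E_L}=\OO_{E_L}$, so $p_a=1$. Equivalently, from \eqref{eq:koszul} one computes $\ch(I_{E_L})=2\ch(\OO(-1))-\ch(\OO(-2))=u_5$ and $\chi(\OO_{E_L})=0$.

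\textbf{Stability of $I_{E_L}[1]$.} I would use Lemma~\ref{lem:stablecriterion}, which requires two inputs: $I_{E_L}$ is tilt-stable of slope zero on the outer segment, and $\Hom(\OO_p,I_{E_L}[1])=\Ext^1(\OO_p,I_{E_L})=0$ for all $p$.

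The Hom vanishing is standard. By Serre duality, $\Ext^1(\OO_p,I)\simeq\Ext^2(I,\OO_p)^\vee$. By the local Koszul resolution, $I_{E_L}$ has homological dimension one, so $\cExt^2(I,\OO_p)=0$. The local-to-global spectral sequence together with zero-dimensional support then gives the vanishing.

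For tilt-stability, $I_{E_L}$ is a rank-one torsion-free sheaf, hence slope-stable. It is therefore tilt-stable for $\alpha\gg0$ (Schmidt). By Corollary~\ref{cor:transport} there are no actual walls on the open outer segment for the first three entries $(1,0,-1)$. It follows that $I_{E_L}$ is tilt-semistable on the whole outer segment; I still need to rule out strict semistability there. A strictly semistable decomposition would produce a wall through that point, and no such wall exists. The one remaining worry is whether the ray along the zero-slope branch to large $b$ meets the large-$\alpha$ chamber without crossing walls. Since the walls \eqref{eq:wallS} meeting the branch have $S=b$, every wall is excluded for $b>b_5$, so the whole branch lies in the Gieseker chamber.

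A strictly semistable $I$ would need a destabilizing subobject $F'$ of slope zero. Because the class $(1,0,-1)$ has rank one and $c=0$, such a subobject lies on a numerical wall through the point. Proposition~\ref{prop:nolarger} excludes this, so $I_{E_L}$ is tilt-stable. Applying Lemma~\ref{lem:stablecriterion} then gives Bridgeland stability for every $s>0$.

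\textbf{Main obstacle.} I expect the hard step to be the first one: the irreducibility of every hyperplane section, i.e.\ that no two sections in $L$ share a component, uniformly for every $L\in\Gr(2,V)$. The fallback would be a degree argument. A common component $S$ would be an effective divisor $aH$ with $a\ge1$ contained in a hyperplane section, which forces the two sections to be equal. Hence $s_1$ and $s_2$ would be proportional, contradicting $\dim L=2$.
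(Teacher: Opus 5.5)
Your proposal is correct and follows essentially the same route as the paper. You use irreducibility of hyperplane sections to get the regular sequence and Koszul resolution, large-$\alpha$ stability plus the nested-wall exclusion of Proposition~\ref{prop:nolarger} for tilt-stability, and Lemma~\ref{lem:stablecriterion} to conclude. The remaining differences are cosmetic: you get $p_a=1$ by adjunction rather than Riemann--Roch; you get $\Ext^1(\OO_p,I_{E_L})=0$ from Serre duality and projective dimension one rather than depth two (equivalent by Auslander--Buchsbaum); and you reach the large-$\alpha$ region through the unbounded chamber rather than the paper's vertical ray at $\beta=-b$.
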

\begin{proof}
Two independent hyperplanes cannot have a common divisorial component on $X$: any effective divisor has positive integral class in $\ZZ H$, so a hyperplane divisor is irreducible and reduced, and a common component would make the divisors equal. Their equations thus form a regular sequence. The Koszul resolution proves \eqref{eq:koszul} and $\ch(I_{E_L})=u_5$. Riemann--Roch gives $\chi(\OO_{E_L}(m))=5m$.

The rank-one torsion-free sheaf $I_{E_L}$ is slope-stable, so it is tilt-stable for sufficiently large $\alpha$ with fixed $\beta<0$. At $\beta=-b$, $b>3/2$, its vertical path down to $\alpha=\sqrt{b^2-2}$ lies outside $W_5$. Nested walls and Proposition~\ref{prop:nolarger} exclude any actual tilt wall on this route. Hence $I_{E_L}$ is tilt-stable there, with slope zero.

At a point of $E_L$, the ideal has depth two over the regular local ring of the smooth threefold; off $E_L$ it is locally free. Thus $\Ext^1_X(\OO_p,I_{E_L})=0$ for every $p$. Apply Lemma~\ref{lem:stablecriterion}.
\end{proof}

\begin{corollary}\label{cor:actual}
For each $b>3/2$ and $s>0$, the universal linear-section family and the universal point family give polystable objects
\begin{equation}\label{eq:polystable}
I_{E_L}[1]\oplus\OO_p
\end{equation}
of class $-v_5$ over $X\times\Gr(2,V)$. Distinct pairs have distinct S-equivalence classes. There are no positive point corrections on the corresponding outer paths for $d=2,3,4$.
\end{corollary}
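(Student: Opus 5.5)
The plan is to assemble Corollary~\ref{cor:actual} from Proposition~\ref{prop:CI}, Lemma~\ref{lem:phaseone}, and Corollary~\ref{cor:budgetouter}, and to check the family and S-equivalence assertions directly.

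\textbf{Polystability.} By Proposition~\ref{prop:CI}, $I_{E_L}[1]$ is Bridgeland-stable of phase one at every outer parameter on $\gamma_{5,s}$. The skyscraper $\OO_p$ lies in the null category, hence in $\cT_{\alpha,\beta}\subset\cA_{\alpha,\beta}$. Its central charge is $-e^\beta(\OO_p)=-1/5<0$, so it has phase one. It is stable because every nonzero subobject in $\cA_{\alpha,\beta}$ of a simple zero-dimensional sheaf of phase one is the sheaf itself; this follows from the cohomology argument in Lemma~\ref{lem:stablecriterion}. A direct sum of stable objects of the same phase is polystable. The class of the sum is $-u_5+\pt=-v_5$.

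\textbf{Families.} Over $\Gr(2,V)$, the tautological rank-two subbundle $\cS\subset V\otimes\OO$ gives a relative Koszul complex
\[
0\to\det\cS\boxtimes\OO_X(-2)\to\cS\boxtimes\OO_X(-1)\to\OO\to\OO_{\mathcal E}\to0 .
\]
Since the equations form a regular sequence fiberwise by Proposition~\ref{prop:CI}, it is exact and flat, giving the universal $I_{\mathcal E}$. Pull it back to $X\times\Gr(2,V)\times X$ and add the shifted-diagonal point family $\OO_\Delta$. Fiberwise these are the objects \eqref{eq:polystable}, so the family lies in the heart and defines a morphism to the semistable stack.

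\textbf{Distinct S-equivalence classes.} The polystable representative determines its Jordan--H\"older factors $I_{E_L}[1]$ and $\OO_p$ up to isomorphism. Then $p$ is recovered as the support of the zero-dimensional factor. The curve $E_L$ is recovered as the support of $\OO_X/I_{E_L}$. From $E_L$ we recover $L=H^0(I_{E_L}(1))$, since $H^0(I_{E_L}(1))=L$ by twisting \eqref{eq:koszul} and using $H^0(\OO_X(-1))=H^1(\OO_X(-1))=0$ (Kodaira vanishing).

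\textbf{Degrees two to four.} By Lemma~\ref{lem:phaseone}, any object of class $-v_d$ decomposes with correction length $m$. Corollary~\ref{cor:budgetouter} gives $m=0$ for $d=2,3,4$. Hence no positive point corrections occur on those outer paths.

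The only nonformal step is the stability of $\OO_p$ in $\cA$ together with the flatness of the universal Koszul family. I expect both to be routine.
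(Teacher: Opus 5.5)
Your proposal is correct and follows essentially the same route as the paper. Both arguments use the stability of the two phase-one summands (Proposition~\ref{prop:CI} for $I_{E_L}[1]$, and the skyscraper $\OO_p$), the flat Koszul universal family, recovery of $p$ and $L=H^0(I_{E_L}(H))$ from the unordered stable factors, and Corollary~\ref{cor:budgetouter} for $d=2,3,4$. Your explicit check that $\OO_p$ is simple in the phase-one category and your relative Koszul complex over $\Gr(2,V)$ only fill in details the paper treats as standard.
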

\begin{proof}
The two summands in \eqref{eq:polystable} are stable of phase one and have different numerical classes. Their direct sum has class $-u_5+\pt=-v_5$. Its unordered stable factors recover $p$ and $I_{E_L}$. The latter recovers $L=H^0(I_{E_L}(H))$ by \eqref{eq:koszul}. The Koszul construction has constant fiber Hilbert polynomial and gives the usual flat universal family of linear sections. The last assertion is Corollary~\ref{cor:budgetouter}.
\end{proof}

\subsection{The finite-length category at the degree-five terminal vertex}
Fix $s>0$ and let $\mathcal P_5(1)$ be the phase-one category of the genuine stability condition $\sigma_{a_5,-b_5,s}$. It is abelian and of finite length by local finiteness of the slicing \cite[Section 5]{Bridgeland}. We work in a Serre subcategory of this category, rather than imposing a Jordan--H\"older theory on the entire weak tilt-slope category.

The proof of Lemma~\ref{lem:phaseone}, which uses only $\Im Z=0$, gives for every $G\in\mathcal P_5(1)$ an exact sequence
\[
0\longrightarrow T[1]\longrightarrow G\longrightarrow Q\longrightarrow0,
\]
where $T=0$ or $T$ is tilt-semistable of slope zero, and $Q$ is zero-dimensional. For the additive functional $\delta(G):=D_5(-\ch(G))$, the BMT inequality gives
\[
\delta(G)=D_5(T)+\frac{\operatorname{length}(Q)}5\ge0.
\]
Consequently $\mathcal P_5(1)_{\delta=0}$ is a Serre subcategory. Its objects are exactly the shifts $T[1]$ with $\nu_{p_5}(T)=0$ and $D_5(T)=0$, and every subquotient has the same form. Shifting a Jordan--H\"older filtration back by one gives an exact filtration in $\Coh^{-b_5}(X)$ without zero-dimensional error terms. Its simple factors correspond to tilt-stable objects of zero slope and zero remainder: any equal-finite-slope destabilizing sequence would, by additivity and nonnegativity of the remainder, give a proper subobject in the same Serre subcategory. Conversely, if $T$ is tilt-stable with zero slope and remainder, then $T[1]$ is simple in this Serre subcategory. Indeed, any phase-one subobject and quotient have zero $\delta$, hence are shifts $T_1[1],T_2[1]$ without point terms; their sequence shifts back to an equal-finite-slope sequence for $T$, contradicting tilt stability unless one term is zero. Since the subcategory is Serre, these objects are also simple in $\mathcal P_5(1)$. This gives the finite stable filtration and its Jordan--H\"older uniqueness.

Every nonzero unshifted factor satisfies
\[
x(T)=c(T)+\frac32r(T)>0,\qquad 2x(T)\in\ZZ_{>0}.
\]
For $u_5$ one has $x(u_5)=3/2$, so such a filtration has at most three factors. This numerical bound supplements the categorical construction.

\subsection{The zero-remainder category in degree five}
The zero-remainder equality imposes the following rigid classification.

\begin{proposition}\label{prop:d5zerofactors}
Let $T$ be a stable finite-slope object at $p_5$ with $\nu_{p_5}(T)=0$ and
$D_5(T)=0$, and suppose that $T$ occurs in a finite stable filtration of $u_5$.
Then
\[
T\simeq A=\OO_X(-1)\qquad\text{or}\qquad
T\simeq B=\OO_X(-2)[1].
\]
Consequently every stable filtration of $u_5$ has factors $A,A,B$.
\end{proposition}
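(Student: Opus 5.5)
The plan is to reduce everything to a finite numerical enumeration at the vertex $p_5=(1/2,-3/2)$, and then to identify the surviving classes using rigidity of discriminant-zero objects together with Euler-characteristic arguments. At $p_5$ we have $\bd=3/2$ and $\ad=1/2$. By \eqref{eq:terminalrelations}, a slope-zero class $T=(r,c,q,e)$ satisfies
\[
x=c+\tfrac32 r>0,\qquad q=-\tfrac32c-r,\qquad \Delta(T)=x^2-\tfrac14r^2\ge0,
\]
and $D_5(T)=0$ fixes $e$ through $e^{-3/2}(T)=x/24$. Since $T$ occurs in a filtration of $u_5$, the discussion preceding the statement gives $2x\in\{1,2,3\}$. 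Bogomolov then gives $|r|\le 2x\le 3$, and $2x=2c+3r$ forces $r\equiv 2x \pmod 2$. This leaves a short list of pairs $(r,c)$. Each determines $q$ and $e$, and I will discard those violating integrality of $c_2$ (that is, $q\in c^2/2-\frac15\ZZ$, as in Table~\ref{tab:larger}) or of $\chi$ via \eqref{eq:RR}.

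I expect the survivors to split into two kinds. The first kind consists of classes with $\Delta=0$, namely $r=\pm2x$. These are the multiples $n\ch(A)$ with $(r,c)=(n,-n)$ and $n\ch(B)$ with $(r,c)=(-n,2n)$. For them I will invoke the standard fact that a tilt-stable object with $\Delta=0$ is, up to shift, a slope-stable vector bundle with $\Delta=0$. On a Picard-rank-one Fano threefold, after twisting, such a bundle has $\ch$ proportional to that of $\OO_X$, and a stable one must be a line bundle. This leaves only $n=1$, giving $T\simeq\OO_X(-1)=A$ or $T\simeq\OO_X(-2)[1]=B$; shift and sign are fixed by membership in $\Coh^{-3/2}(X)$. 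The second kind consists of classes with $\Delta>0$. These occur only for $2x\in\{2,3\}$ with $|r|<2x$; the main case is $T=u_5$ itself, possibly together with a few classes of rank $0$ or $\pm1$ at $x=1$.

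The main obstacle is to exclude the $\Delta>0$ survivors, above all a tilt-stable object of class $u_5$ at the wall $W_5$, since $W_5$ is precisely the wall along which $A$ and $B$ have equal slope. For a class $T$ from this list, I will compute $\chi(A,T)$ or $\chi(T,B)$ by Riemann--Roch and show it is positive. The aim is to obtain $\Hom(A,T)\ne0$ or $\Hom(T,B)\ne0$ by bounding the other Ext groups. First, $\Ext^2(A,T)\simeq\Hom(T,A(-2)[1])^\vee$ vanishes, by comparing slopes at $p_5$ and using stability. Second, $\Ext^{<0}$ and $\Ext^{\ge4}$ vanish by heart and amplitude considerations, exactly as in Proposition~\ref{prop:inner}. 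A nonzero map between stable objects of equal slope zero is then an isomorphism, which contradicts the difference of classes. For $T=u_5$, I expect $\chi(\OO_X(-1),u_5)=\chi(I_{E}(1))=2$ to do this job.

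Once the stable zero-remainder factors are known to be only $A$ and $B$, the last assertion is linear algebra. Writing $u_5=\alpha\ch(A)+\beta\ch(B)$ with $\alpha,\beta\ge0$ integers, and using that $\ch(A)$ and $\ch(B)$ are independent, the identity $u_5=2\ch(A)+\ch(B)$ forces $(\alpha,\beta)=(2,1)$.
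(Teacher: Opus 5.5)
Your proposal is correct and is essentially the paper's argument. The paper writes $\ch(T)=n\ch(A)+m\ch(B)$ with $m=c+r$ and $n=c+2r$, so $\Delta(T)=mn$, and your $\Delta>0$ survivors are exactly the cases $m,n\ge1$. It excludes all of these uniformly via $\chi(A,T)=n>0$, $\Ext^2(A,T)\simeq\Hom(T,\OO_X(-3)[1])^\vee=0$ and the amplitude bound, so your finite enumeration through $2x\le3$ is harmless but unnecessary. It identifies the $mn=0$ classes by citing \cite{VassilievLow} rather than through the $\Delta=0$ rigidity you invoke.

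One correction. In weak tilt stability, a nonzero map between stable objects of equal slope need not be an isomorphism (e.g.\ $I_p\hookrightarrow\OO_X$ for $\alpha\gg0$). It is only a monomorphism in $\Coh^{-3/2}(X)$ with zero-dimensional cokernel. This still gives your contradiction, because the rank and first Chern entry of $T$ differ from those of $A$ once $m\ge1$. The paper avoids the issue by using that $A[1]$ and $T[1]$ are non-isomorphic simple objects of $\mathcal P_5(1)_{\delta=0}$.
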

\begin{proof}
Write $T=(r,c,q,e)$ and set
\[
m=c+r,\qquad n=c+2r.
\]
At $d=5$, equation \eqref{eq:RRbudget} and $D_5(T)=0$ give $m=\chi(T)$. Combining
this with the slope-zero equation in \eqref{eq:terminalrelations} yields
\begin{equation}\label{eq:d5lattice}
(r,c,q,e)=\left(n-m,\,2m-n,\,\frac n2-2m,\,\frac{4m}{3}-\frac n6\right)
=n\ch(A)+m\ch(B).
\end{equation}
Moreover
\[
x(T)=\frac{m+n}{2}>0,\qquad \Delta(T)=mn\ge0.
\]
Since $m,n\in\ZZ$, these two inequalities imply $m,n\ge0$.

Assume $m,n>0$. Riemann--Roch gives
\[
\chi(A,T)=n>0.
\]
Both $A[1]$ and $T[1]$ are simple in $\mathcal P_5(1)_{\delta=0}$ by the preceding argument. For $m,n>0$ their classes differ, so
$\Hom(A,T)=\Hom(A[1],T[1])=0$.
This uses simplicity in the genuine phase-one category, rather than a Hom-vanishing assertion for arbitrary tilt-stable objects of equal weak slope. Also
\[
\Ext^2(A,T)\simeq\Hom(T,\OO_X(-3)[1])^\vee=0,
\]
because $\OO_X(-3)[1]$ has strictly negative tilt slope at $p_5$. The amplitude
bound therefore gives $\chi(A,T)\le0$, a contradiction. Thus $mn=0$.

If $m=0$, then \eqref{eq:d5lattice} gives $\ch(T)=n\ch(A)$; \cite[Proposition~3.1]{VassilievLow} gives $T\simeq A^{\oplus n}$, and stability forces $n=1$. If
$n=0$, \cite[Corollary~3.3]{VassilievLow} gives $T\simeq B^{\oplus m}$ and hence
$m=1$. Finally $u_5=2\ch(A)+\ch(B)$ fixes the multiplicities.
\end{proof}

\subsection{Classification of the corrected factors}
\begin{proposition}\label{prop:d5reconstruction}
Let $F$ be tilt-semistable of class $u_5=v_5+\pt$ at an outer point of $\gamma_{5,s}$. Then $F$ is tilt-stable, and there is a unique two-plane $L\subset V$ such that
\[
F\simeq I_{E_L}.
\]
Equivalently, the outer stable quotient for the corrected factor is $\Gr(2,V)=\Gr(2,7)$.
\end{proposition}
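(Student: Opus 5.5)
Transport $F$ to the terminal vertex $p_5$ using Corollary~\ref{cor:transport}; it remains tilt-semistable of slope zero there. By Corollary~\ref{cor:budgetouter} its total remainder $D_5(F)$ vanishes, so by Proposition~\ref{prop:budget} every stable factor of a finite stable filtration of $F$ (built in the Serre subcategory $\mathcal P_5(1)_{\delta=0}$) has zero remainder. Proposition~\ref{prop:d5zerofactors} then says that the factors are $A,A,B$ with $A=\OO_X(-1)$ and $B=\OO_X(-2)[1]$. Since $\Ext^1(A,B)=0$ by \eqref{eq:sevenarrows}, and $\Hom$ between distinct simple factors vanishes, I will reorder the Jordan--H\"older filtration so that the copies of $A$ form the subobject. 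This gives a triangle in $\Coh^{-b_5}(X)$
\[
0\longrightarrow U\otimes A\longrightarrow F\longrightarrow B\longrightarrow0 ,
\]
with $\dim U=2$ and extension class in $\Ext^1(B,U\otimes A)=U\otimes V$, that is, a linear map $\phi:U^\vee\to V$. Concretely, $F$ is the cone of $\OO_X(-2)\to U\otimes\OO_X(-1)$ given by $\phi$. This realizes the Kronecker-quiver picture: $p_5$-semistable objects with these factors are representations of the $7$-arrow Kronecker quiver with dimension vector $(1,2)$.

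Next I will return to the outer segment and decide which extensions are tilt-semistable there. Because $F$ is slope zero at an outer point and $W_5$ is the wall where $\OO_X(-1)$ and $v_5$ have equal slope, outer semistability should forbid quotients $F\to B$ coming from a split-off summand and forbid subobjects $A\hookrightarrow F$ that lift from a nonzero vector $u\in\ker\phi$. I will check the latter numerically: if $\phi$ has rank $\le1$, there is a subobject $\OO_X(-1)\hookrightarrow F$, or a quotient $F\to\OO_X(-1)$, whose slope strictly exceeds that of $F$ just outside $W_5$. This contradicts semistability, so $\phi$ must be injective. Its image $L=\phi(U^\vee)\in\Gr(2,V)$ then determines $F$ up to isomorphism, since the $\mathrm{GL}(U)$ action is absorbed. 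By Proposition~\ref{prop:CI} the map $\OO_X(-2)\to\OO_X(-1)^{\oplus2}$ is injective with cokernel $I_{E_L}$, so $F\simeq I_{E_L}$. Uniqueness of $L$ follows from $L=H^0(I_{E_L}(H))$. Tilt-stability of $F$ is Proposition~\ref{prop:CI}, or equivalently the standard fact that injective Kronecker representations of dimension $(1,2)$ are stable for the outer King character.

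The main obstacle is the middle step: rigorously identifying outer tilt-semistability with injectivity of $\phi$, including the non-split but rank-one case. There one must exhibit a destabilizing $A$ inside $F$, or show $F$ fails to be a torsion-free sheaf with the right slope, and compare the slopes of $A$ and $F$ on the correct side of $W_5$. I would first try to compute directly that $\nu(\OO_X(-1))>\nu(F)=0$ for $b>3/2$, using the formula in Proposition~\ref{prop:inner} with the inequality reversed. I would also verify that every object with these factors is actually of the stated cone form, because the filtration may a priori put $B$ in the middle. The vanishing $\Ext^1(A,B)=0$ and the vanishing of $\Hom(A,B)$ and $\Hom(B,A)$ should handle this reordering.
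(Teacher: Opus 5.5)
\textbf{There is a gap at the step you flag as the main obstacle, and the mechanism you propose for it runs the wrong way.} Up to the cone presentation your outline matches the paper's proof: transport to $p_5$, zero-remainder factors $A,A,B$, reordering with $\Ext^1(A,B)=0$ (plus $\Ext^1(A,A)=H^1(\OO_X)=0$ to get $A^{\oplus2}$), and $F\simeq\operatorname{cone}(\OO_X(-2)\to\OO_X(-1)^{\oplus2})$. The final steps are also fine: Koszul, $L=H^0(I_{E_L}(H))$, and stability from Proposition~\ref{prop:CI}.

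The problem is the exclusion of degenerate $\phi$. On the outer segment
$\nu(\OO_X(-1))=\frac{3/2-b}{b-1}<0=\nu(F)$ for every $b>3/2$. This is the formula from Proposition~\ref{prop:inner}, read on the other side of $W_5$. So the inequality $\nu(\OO_X(-1))>\nu(F)$ you plan to check is false. Consequently no subobject $\OO_X(-1)\hookrightarrow F$ can destabilize at an outer point; for instance, $s_1:\OO_X(-1)\hookrightarrow I_{E_L}$, with torsion cokernel, is such a subobject of a stable object.

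A kernel vector also does not produce a subobject. A nonzero $\xi\in\ker(\phi:U^\vee\to V)$ kills the image of $\OO_X(-2)$, so it induces a split surjection $F\twoheadrightarrow\OO_X(-1)$. This quotient is the correct destabilizer. If $\rk\phi\le1$, then $F\simeq\OO_X(-1)\oplus\operatorname{cone}(\OO_X(-2)\xrightarrow{s}\OO_X(-1))$. The summand $\OO_X(-1)$ lies in $\Coh^{-b}(X)$ and is a quotient of strictly \emph{smaller} slope than $F$, so $F$ is not tilt-semistable there. In particular there is no ``non-split rank-one case'' to worry about: rank at most one always splits off $\OO_X(-1)$.

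The paper closes this step differently. It first moves $F$ up the vertical ray $\beta=-b$ to large $\alpha$, where there are no walls by Proposition~\ref{prop:nolarger}. There $F$ must be a rank-one torsion-free sheaf, hence slope-stable and already tilt-stable at the outer point. Dependent $(s_1,s_2)$ are then excluded because the cone would have a divisorial torsion summand $\OO_S(-1)$, or nonzero $H^{-1}$ if both sections vanish.

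Your corrected quotient argument is a valid alternative. It stays at the outer point and avoids the large-volume identification. The paper's route instead yields tilt-stability of $F$ before the reconstruction, rather than recovering it afterwards from Proposition~\ref{prop:CI}. As written, though, your proposal does not establish the key step, and the inequality it rests on has the wrong sign.
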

\begin{proof}
The first three Chern-character entries of $u_5$ are the same as those of $v_5$. Proposition~\ref{prop:nolarger} therefore excludes every actual tilt wall on the vertical ray from the chosen outer point to sufficiently large $\alpha$. At large $\alpha$, a tilt-semistable object of rank one and $c_1=0$ must be a torsion-free sheaf: if its ordinary $H^{-1}$ were nonzero, the subobject $H^{-1}[1]$ would have tilt slope tending to $+\infty$, whereas the slope of $F$ tends to $-\infty$; a torsion subsheaf of $H^0$ would likewise have finite or infinite tilt slope and destabilize. Thus $F$ itself is a rank-one torsion-free sheaf. Such a sheaf is slope-stable in rank one, hence tilt-stable for sufficiently large $\alpha$. Since there is no actual wall on the vertical ray, $F$ was already tilt-stable at the chosen outer point.

Transport $F$ to $p_5$. Proposition~\ref{prop:d5zerofactors}, finite length at the terminal slope, and $\Ext^1(A,B)=0$ reorder its terminal filtration into a triangle
\[
A^{\oplus2}\longrightarrow F\longrightarrow B\longrightarrow A^{\oplus2}[1],
\qquad A=\OO_X(-1),\quad B=\OO_X(-2)[1].
\]
After rotating, this is the cone of a morphism
\begin{equation}\label{eq:d5correctedmap}
\OO_X(-2)\xrightarrow{(s_1,s_2)}\OO_X(-1)^{\oplus2},
\qquad (s_1,s_2)\in V^{\oplus2}.
\end{equation}
If $s_1,s_2$ are linearly dependent, the cokernel of \eqref{eq:d5correctedmap} has a divisorial torsion summand (and if both vanish the cone also has nonzero negative cohomology), contradicting that $F$ is a torsion-free sheaf. Hence $s_1,s_2$ are independent. They form a regular sequence by Proposition~\ref{prop:CI}, and the Koszul complex identifies the cone with $I_{E_L}$ for $L=\langle s_1,s_2\rangle$. Finally $L=H^0(I_{E_L}(H))$, so it is unique. Modding out the choice of ordered basis by $\mathrm{GL}_2$ gives $\Gr(2,V)$.
\end{proof}

\begin{remark}\label{rem:lowdegreebackground}
The known degree-two, three, and four curve spaces are $\PP^4$, $\Gr(3,5)$, and $\Gr_{\PP^2}(2,\cF)$, where $\cF$ is the rank-five bundle $\pi_*\cI_{\mathcal L_{\mathrm{univ}}}(1)$ over the line space $\PP^2$. See \cite[Proposition 2.32, Corollary 2.33, and Proposition 2.46 with Remark 2.47]{Sanna}, \cite[Proposition 7.2]{CHL}, and \cite[Theorem 1.1 and Propositions 4.4--4.5]{CKK}. Their terminal reconstruction is not used here; Corollary~\ref{cor:budgetouter} supplies the absence of positive corrections.
\end{remark}

\section{The APR root sixfold and the inverse contraction graph}\label{sec:normal}
We use the APR chamber contraction to prove the normal-bundle and graph statements of Theorem~\ref{thm:mainconstruction}(i).

Set
\[
\tau_5=v_5-\ch(\OO_X(-1))=\left(0,1,-\frac32,\frac{29}{30}\right),
\qquad w=\tau_5(1)=\left(0,1,-\frac12,-\frac1{30}\right).
\]
The full strong exceptional collection originates in \cite{OrlovV5}; in the form recalled in \cite[Section 3]{APR}, it gives the presentation
\[
D^b(X)=\langle\cU,\cQ^\vee,\OO_X,\OO_X(1)\rangle,
\qquad w=2[\cQ^\vee]-3[\cU].
\]
We use the moduli space $N^{\mathrm{Ku}}_5$ and the embedding $j$ established in \cite[Section 5.1 and Lemmas 4.8--4.9]{APR}. Its points on $j(X)$ are right mutations $E_p$ fitting into triangles
\begin{equation}\label{eq:APRpointtriangle}
\OO_X(-1)[1]\longrightarrow E_p\longrightarrow I_p
\longrightarrow\OO_X(-1)[2].
\end{equation}
The scalar choice in the last arrow is harmless at one point but must be tracked in a family. The sixfold is smooth, rational, and Fano of Picard rank one and index three; see also \cite[Proposition 2.1]{Meng}.

\subsection{The APR chamber contraction}\label{subsec:exactwall}
Tensoring by $\OO_X(H)$ sends
\[
(p_5,\tau_5)=\left(\frac12,-\frac32;\tau_5\right)
\quad\text{to}\quad
\left(\frac12,-\frac12;w\right).
\]
The latter point is the unique weak tilt wall for $w$ in \cite[Proposition~4.7]{APR}. At that wall the Gieseker-side objects fit into triangles
\begin{equation}\label{eq:APRwall}
I_p\longrightarrow I_{p/S}\longrightarrow\OO_X(-1)[1],
\end{equation}
and the reverse non-split extension is $E_p$. We use this adjacent-chamber description in the geometric constructions below.

\begin{definition}\label{def:principalwall}
Let $\mathcal R_{\mathrm{root}}$ be the Gieseker root component of $M_H(w)$, and let $\rho$ be the restriction to it of the APR wall-crossing morphism. Throughout the geometric constructions below, $N=N^{\mathrm{Ku}}_5$ denotes the Kuznetsov-component moduli sixfold, identified with the adjacent chamber moduli $M_-(w)$.
\end{definition}

\begin{theorem}\label{thm:exactprincipal}
There is a canonical chamber isomorphism $M_-(w)\simeq N^{\mathrm{Ku}}_5$. The APR morphism restricts to a birational contraction
\[
\rho:\mathcal R_{\mathrm{root}}\longrightarrow N^{\mathrm{Ku}}_5.
\]
It is an isomorphism off the point boundary and sends $I_{p/S}$, for $S$ singular at $p$, to $E_p=j(p)$. The fiber of $\rho$ over $j(p)$ has underlying points parametrized by $\PP(K_p)$.
\end{theorem}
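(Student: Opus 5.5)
We will derive Theorem~\ref{thm:exactprincipal} largely from the APR chamber analysis, transported by the twist $\otimes\OO_X(H)$. That twist sends $(p_5,\tau_5)$ to $(\tfrac12,-\tfrac12;w)$, and \cite[Proposition~4.7]{APR} identifies this as the unique weak tilt wall for $w$.

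\emph{Step one.} We identify the adjacent chamber with the Kuznetsov component. We plan to invoke \cite[Section~5.1 and Lemmas~4.8--4.9]{APR}. These show that objects of class $w$ which are semistable just below the wall are exactly the objects of $\mathrm{Ku}(X)$ of that class that are stable for the induced stability condition. Hence the moduli spaces coincide as fine moduli spaces, giving $M_-(w)\simeq N^{\mathrm{Ku}}_5$. Canonicity comes from the fact that both sides represent the same subfunctor of the stack of objects. The only other input is that the open condition cutting out $M_-(w)$ agrees with Kuznetsov-component membership for class $w$. We will check this by observing that $\Hom(\OO_X(-1)[1],-)$ and the remaining exceptional-object vanishings hold on both sides of the wall except along the destabilizing triangles \eqref{eq:APRwall}.

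\emph{Step two.} We construct $\rho$ and show it is an isomorphism off the point boundary. The APR wall-crossing morphism \cite[Propositions~4.11--4.12]{APR} is defined on $M_H(w)$ (the Gieseker side) and maps to $M_-(w)$. Objects not destabilized at the wall are stable on both sides, so $\rho$ is an isomorphism there. By Proposition~\ref{prop:nolarger} and the uniqueness of the wall, the only destabilized Gieseker objects are the $I_{p/S}$ fitting in \eqref{eq:APRwall}. These make up the point boundary, and the other members of $M_H(w)$ lie in the root component by irreducibility. For each such $I_{p/S}$, the Jordan--H\"older factors at the wall are $I_p$ and $\OO_X(-1)[1]$. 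On the other side the unique stable object with these factors is the reverse non-split extension \eqref{eq:APRpointtriangle}, namely $E_p=j(p)$. Uniqueness requires $\dim\Ext^1(I_p,\OO_X(-1)[1])=1$, which follows from Serre duality and $h^0(I_p(H))=6$ together with Riemann--Roch. Consequently $\rho(I_{p/S})=E_p$.

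\emph{Step three.} We show the fiber over $j(p)$ is $\PP(K_p)$. The main obstacle is that this requires identifying which extensions in \eqref{eq:APRwall} are Gieseker-stable and lie in the root component. We compute $\Ext^1(\OO_X(-1)[1],I_p)=\Ext^2(\OO_X(-1),I_p)$ via the sequence $0\to I_p\to\OO_X\to\OO_p\to0$, which gives $H^1(\OO_p(1))$-type contributions. A cleaner route is to write $I_{p/S}$ as the ideal of $p$ inside a hyperplane section $S\ni p$, twisted appropriately. Then $S$ ranges over $\PP(W_p)$, and $I_{p/S}$ lies in the closure of the root locus exactly when $S$ is singular at $p$, i.e.\ when its equation lies in $K_p$. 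We therefore plan to show that extensions with $S$ smooth at $p$ are either not torsion-free of the right type or fail to be limits of root objects. For this we compare the tangent directions: smoothings of $I_{p/S}$ toward generic elements of $M_H(w)$ exist precisely when the jet condition holds. The outcome is that the underlying points of the fiber are $\PP(K_p)$. Birationality follows because the exceptional fibers $\PP(K_p)$ have dimension $2$ over the threefold $j(X)$, so the exceptional locus has dimension $5<6$.
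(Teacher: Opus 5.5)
Your Steps one and two follow the paper's route in substance. The paper restricts the relative-mutation morphism $s:M_H(w)\to M_-(w)$ of \cite[Proposition~4.11]{APR}, which is an isomorphism off the point-on-hyperplane component and contracts the fiber with fixed $p$ to $E_p$. It identifies the target with $N^{\mathrm{Ku}}_5$ by \cite[Proposition~4.12 and Section~5.1]{APR}. Your extra ``check'' of Kuznetsov-component membership via Hom vanishings is not a real verification and is unnecessary once you cite this.

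The genuine gap is Step three. The last assertion of the theorem amounts to saying that $\mathcal R_{\mathrm{root}}$ meets the point-on-hyperplane component exactly in the sheaves $I_{p/S}$ with $S$ singular at $p$, i.e.\ with equation in $K_p$. The paper takes this from \cite[Proposition~4.1]{APR}. You only announce that ``smoothings of $I_{p/S}$ toward generic elements exist precisely when the jet condition holds,'' and you carry out neither direction.

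The inclusion $\PP(K_p)\subset\rho^{-1}(j(p))$ cannot come from tangent data at all, because a first-order smoothing direction need not integrate. Membership in the closure of the root locus needs either an actual degeneration of root sheaves $\OO_{S_t}(D_t)$ to $I_{p/S}$ or the APR classification.

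The exclusion direction can be done infinitesimally, but you would have to do it. For $S$ smooth at $p$ one has $\Ext^1_S(I_{p/S},I_{p/S})\simeq\CC^2$ and $\Hom_S(I_{p/S},I_{p/S}(H))\simeq H^0(\OO_S(H))\simeq\CC^6$. Hence $\dim\Ext^1_X(I_{p/S},I_{p/S})\le 8$. So $M_H(w)$ is smooth there, and only the eight-dimensional point component passes through.

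Several supporting statements are also wrong:
\begin{itemize}
\item Proposition~\ref{prop:nolarger} concerns walls for the classes $v_d$ enclosing $W_d$ and says nothing about the rank-zero class $w$. The uniqueness of the wall is \cite[Proposition~4.7]{APR}.
\item $M_H(w)$ is not irreducible; it is the union of the root and point-on-hyperplane components. The point boundary of $\mathcal R_{\mathrm{root}}$ is only their intersection, not the whole destabilized locus. So ``by irreducibility'' must be replaced by \cite[Proposition~4.1]{APR}.
\item The one-dimensionality of $\Ext^1(I_p,\OO_X(-1)[1])=\Ext^2(I_p,\OO_X(-1))$ is correct, but it comes from Serre duality, $\Ext^2(I_p,\OO_X(-1))\simeq H^1(I_p(-1))^\vee\simeq\CC$ (compare Lemma~\ref{lem:mutationline}). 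It does not follow from $h^0(I_p(H))=6$.
\item In Step three, $\Ext^1(\OO_X(-1)[1],I_p)=\Hom(\OO_X(-1),I_p)=W_p$, not $\Ext^2(\OO_X(-1),I_p)$, which vanishes.
\end{itemize}
Done correctly, that last computation shows the extensions \eqref{eq:APRwall} fill all of $\PP(W_p)\simeq\PP^5$. This is exactly why singling out $\PP(K_p)$ requires the root-closure classification you have not supplied.
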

\begin{proof}
By \cite[Proposition~4.11]{APR}, relative mutation of the universal Gieseker family gives $s:M_H(w)\to M_-(w)$; it is an isomorphism off the point-on-hyperplane component and contracts its fiber with fixed $p$ to $E_p$. Proposition~4.12 and Section~5.1 of the same reference identify the target with the smooth $3$-Kronecker sixfold $N^{\mathrm{Ku}}_5$. Proposition~4.1 identifies the intersection with the root component as the sheaves $I_{p/S}$ for which $S$ is singular at $p$. Such sections are exactly the nonzero elements of $K_p$ up to scalar. Restricting $s$ gives the assertions. This uses the APR adjacent-chamber moduli functor.
\end{proof}

\subsection{Relative mutation and its line-bundle coefficient}
Let $p_1,p_2:X\times X\to X$ be the projections, with the second factor the parameter space, and put $\pi=p_2$. Let $\cI=I_\Delta$ be the universal point ideal and $A=\OO_X(-H)$. We use relative Ext sheaves
\[
\cExt^i_\pi(\mathcal B,\mathcal D)
=\mathcal H^i\bigl(R\pi_*R\mathcal Hom(\mathcal B,\mathcal D)\bigr).
\]

\begin{lemma}\label{lem:mutationline}
There is a natural isomorphism
\[
R\pi_*R\mathcal Hom(\cI,p_1^*A)\simeq\cL[-2],
\qquad \cL=A\otimes\omega_X^{-1}\simeq\OO_X(H).
\]
The relative right-mutation family $\cE$ has a triangle
\begin{equation}\label{eq:relativeE}
p_1^*A\otimes\pi^*\cL^\vee[1]\longrightarrow\cE\longrightarrow\cI
\longrightarrow p_1^*A\otimes\pi^*\cL^\vee[2].
\end{equation}
\end{lemma}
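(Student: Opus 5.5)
We compute $R\pi_*R\mathcal Hom(\cI,p_1^*A)$ by relative Grothendieck duality along the smooth projective morphism $\pi=p_2$ of relative dimension three, whose relative dualizing sheaf is $p_1^*\omega_X$. Duality gives
\[
R\pi_*R\mathcal Hom(\cI,p_1^*A)\simeq \bigl(R\pi_*(\cI\otimes p_1^*(A^\vee\otimes\omega_X))[3]\bigr)^\vee .
\]
Here $A^\vee\otimes\omega_X=\OO_X(H)\otimes\OO_X(-2H)=\OO_X(-H)$. So it suffices to compute $R\pi_*(\cI\otimes p_1^*\OO_X(-H))$. For this we push forward the universal sequence $0\to\cI\to\OO_{X\times X}\to\OO_\Delta\to0$ twisted by $p_1^*\OO_X(-H)$. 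The middle term pushes forward to $R\Gamma(X,\OO_X(-H))\otimes\OO_X$, which vanishes by Kodaira vanishing: $\OO_X(-H)=\omega_X\otimes\OO_X(H)$ and $H$ is ample. The diagonal term pushes forward to $\pi_*(\OO_\Delta\otimes p_1^*\OO_X(-H))=\OO_X(-H)$ in degree zero. The long exact sequence therefore gives $R\pi_*(\cI\otimes p_1^*\OO_X(-H))\simeq\OO_X(-H)[-1]$, naturally in the base.

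Dualizing and applying the shift by $3$, we get
\[
R\pi_*R\mathcal Hom(\cI,p_1^*A)\simeq(\OO_X(-H)[2])^\vee=\OO_X(H)[-2].
\]
Since $\omega_X^{-1}=\OO_X(2H)$, we have $\OO_X(H)=A\otimes\omega_X^{-1}=\cL$. Alternatively, one can avoid duality and use the local-to-global spectral sequence directly. We have $\mathcal Hom(\cI,p_1^*A)=p_1^*A$, and $\cExt^i(\cI,\OO)=\cExt^{i+1}(\OO_\Delta,\OO)$ is nonzero only for $i=2$, where it equals $\det N_\Delta\otimes\OO_\Delta$. The vanishing $H^*(X,\OO_X(-1))=0$ then kills the first term, leaving $\pi_*(\omega_X^{-1}\otimes A)|_\Delta[-2]=\cL[-2]$. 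This second route makes the identification of $\cL$ as $A\otimes\omega_X^{-1}$ canonical, which is what the statement records. It is the point we need to track, since, as remarked after \eqref{eq:APRpointtriangle}, the scalar in the last arrow must be trivialized in families.

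For the triangle, we define the relative right mutation as the cone of the relative evaluation (coevaluation) morphism. By adjunction, the identity of $\cL$ corresponds to a canonical class in
\[
\Hom\bigl(\cI,\,p_1^*A\otimes\pi^*\cL^\vee[2]\bigr)=H^0\bigl(X,\cL^\vee\otimes R\pi_*R\mathcal Hom(\cI,p_1^*A)[2]\bigr)=H^0(X,\OO_X).
\]
The adjunction used is the projection formula, $R\pi_*R\mathcal Hom(\cI,p_1^*A\otimes\pi^*M)=R\pi_*R\mathcal Hom(\cI,p_1^*A)\otimes M$. We then set $\cE$ to be the shifted cone of this universal morphism, which yields \eqref{eq:relativeE}. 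On each fiber over $p$, the universal class restricts to a generator of the one-dimensional space $\Ext^2(I_p,\OO_X(-1))$, by base change; base change holds because the pushforward is a line bundle in a single degree. So $\cE$ restricts to $E_p$ as in \eqref{eq:APRpointtriangle}, confirming that it is the relative mutation family.

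The main obstacle is naturality. We must check that the base-change and duality isomorphisms are compatible, so that the coevaluation class is a nowhere-vanishing section of $\OO_X$ rather than merely fiberwise nonzero. This is exactly why the twist $\pi^*\cL^\vee$ must appear in \eqref{eq:relativeE}.
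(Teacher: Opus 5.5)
Your proposal is correct and follows essentially the paper's argument: your second route, where acyclicity of $A$ kills the $p_1^*A$ term, the diagonal contributes $A\otimes\det T_X$ in degree two, and $\cE$ is the shifted cone of the universal coevaluation class, is the paper's proof, and your relative-duality route is a harmless equivalent variant. The naturality issue you flag at the end is not a real obstacle: the universal class is $1\in H^0(X,\OO_X)$, and derived base change for the perfect complex $R\pi_*R\mathcal Hom(\cI,p_1^*A)$ automatically makes it restrict to a generator of $\Ext^2(I_p,A)$ on every fiber.
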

\begin{proof}
The bundle $A$ is acyclic. The sequence $0\to I_\Delta\to\OO_{X\times X}\to\OO_\Delta\to0$ reduces the relative Hom computation to the diagonal. Since the diagonal is a regular codimension-three embedding with normal bundle $T_X$,
\[
R\pi_*R\mathcal Hom(\OO_\Delta,p_1^*A)
\simeq (A\otimes\det T_X)[-3].
\]
The shift in the ideal sequence gives the claimed complex. The relative coevaluation map has target the dual of this complex tensored with $p_1^*A$; its fiber is a nonzero generator of $\Ext^2(I_p,A)$. Taking its shifted cone gives \eqref{eq:relativeE}.
\end{proof}

\begin{proposition}\label{prop:normal}
For the embedding $j:X\hookrightarrow N^{\mathrm{Ku}}_5$,
\[
N_{X/N^{\mathrm{Ku}}_5}\simeq K(H).
\]
\end{proposition}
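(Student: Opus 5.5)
We compute the normal bundle through deformation theory of the family $\cE$ from Lemma~\ref{lem:mutationline}. Since $N=N^{\mathrm{Ku}}_5$ is smooth and fine near $j(X)$, we have $T_N|_{X}\simeq\cExt^1_\pi(\cE,\cE)$. The embedding $j$ is induced by $\cE$, so $T_X\to j^*T_N$ is the Kodaira--Spencer map of $\cE$. The plan is to compute the rank-six bundle $\cExt^1_\pi(\cE,\cE)$ by filtering with \eqref{eq:relativeE}, to identify $T_X$ inside it with the deformations of the point ideal, and to recognize the rank-three quotient as $K(H)$.

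\textbf{Step 1: the pieces of $\RHom(E_p,E_p)$.} Apply $\Hom(-,E_p)$ and $\Hom(E_p,-)$ to \eqref{eq:APRpointtriangle}, with $A=\OO_X(-1)$. The inputs are:
\begin{itemize}
\item $A$ is exceptional;
\item $\RHom(I_p,A)=\CC[-2]$ by Lemma~\ref{lem:mutationline}, and it is killed by the mutation;
\item $\Ext^i(I_p,I_p)$ equals $\CC$, $T_pX$, $\wedge^2T_pX$ for $i=0,1,2$;
\item $\RHom(A,I_p)=R\Gamma(I_p(1))$, which is $W_p\simeq\CC^6$ in degree $0$, since $h^0(\OO(1))=7$ and $p$ imposes one condition.
\end{itemize}
Because $E_p$ is the right mutation of $I_p$, we get $\RHom(E_p,A)=0$, hence $\RHom(E_p,E_p)=\RHom(E_p,I_p)$. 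The triangle then gives
\[
\RHom(A[1],I_p)\to\RHom(E_p,I_p)\to\RHom(I_p,I_p),
\]
which yields the exact sequence
\[
0\to T_pX\to\Ext^1(E_p,E_p)\to\Hom(A,I_p)\to\Ext^2(I_p,I_p).
\]
The leftmost term $T_pX$ is $\Ext^1(I_p,I_p)$. The next term is $\Ext^0(A,I_p)$, which sits in degree $1$ after the shift.

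The last map is the Yoneda product with the extension class $\epsilon\in\Ext^2(I_p,A)$. For $s\in W_p=\Hom(A,I_p)$, the composite $s\circ\epsilon\in\Ext^2(I_p,I_p)\simeq\wedge^2T_p\simeq T^*_p\otimes\det T_p$ should be, up to the fixed line $\Ext^2(I_p,A)$, the first jet $ds(p)\in\mathfrak m_p/\mathfrak m_p^2\otimes\OO(1)_p$. Its kernel is therefore $K_p$, and since $\dim\Ext^1(E_p,E_p)=6$, we get $3+\dim K_p=3+3$, which matches. Hence, pointwise,
\[
0\to T_pX\to T_{j(p)}N\to K_p\to0 .
\]

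\textbf{Step 2: globalize with line-bundle twists.} Run the same computation relatively over $X$ using \eqref{eq:relativeE}. Now the $A$-term carries the factor $\pi^*\cL^\vee$, and $\cL=\OO_X(H)$ by Lemma~\ref{lem:mutationline}. The relative analogue of $\Hom(A,I_p)$ is $\pi_*\bigl(\cI\otimes p_1^*A^\vee\bigr)\otimes\cL$. From the shifted term $p_1^*A\otimes\pi^*\cL^\vee[1]$, the twist enters as $\mathcal Hom(p_1^*A\otimes\pi^*\cL^\vee,\cI)=\pi^*\cL\otimes(\ldots)$. Since $\pi_*(\cI\otimes p_1^*\OO(H))=W$ by \eqref{eq:W}, this term is $W\otimes\OO_X(H)$.

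The relative Yoneda map $W(H)\to\cExt^2_\pi(\cI,\cI)\simeq\wedge^2T_X$ is the jet map $W\to\Omega_X(H)$ twisted by $\OO(H)$; note $\wedge^2T_X\simeq\Omega_X\otimes\omega_X^{-1}=\Omega_X(2H)$. Using $K=\ker(V\otimes\OO\to J^1\OO(H))$ from \eqref{eq:K}, together with \eqref{eq:W}, the kernel of this jet map is $K$, so its twisted kernel is $K(H)$. This gives
\[
0\to T_X\to j^*T_N\to K(H)\to0,
\]
with the first map equal to the Kodaira--Spencer map of $j$. Hence $N_{X/N}\simeq K(H)$.

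\textbf{Expected obstacle.} The main difficulty is proving that the Yoneda product with the universal mutation class $\epsilon$ really is the first-jet map, and that it is surjective in the relative setting. Surjectivity is what makes $\cExt^1_\pi(\cE,\cE)$ locally free of rank six and the sequence exact. The natural first attempt is a local Koszul computation at $p$. In that computation $\Ext^2(I_p,A)\simeq\Ext^3(\OO_p,A)$ is generated by the Koszul class. The product $\Hom(A,I_p)\times\Ext^3(\OO_p,A)\to\Ext^3(\OO_p,I_p)$ is then identified, via $\Ext^3(\OO_p,I_p)\simeq\Ext^2(I_p,I_p)\simeq T^*_p\otimes\det T_p$, with $s\mapsto ds(p)$, because a section vanishing to second order acts by zero on the Koszul complex.

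A second point to check is that the twist bookkeeping gives exactly $K\otimes\OO_X(H)$ and not $K\otimes\OO_X(2H)$, since $\det T_X=\OO(2H)$ must cancel correctly. A useful consistency check is $c_1$: the computation should give $c_1(T_N|_X)=3H$, matching index three. We have $c_1(K)=-c_1(J^1\OO(H))=-(4H-2H)=-2H$, hence $c_1(K(H))=H$, and $c_1(T_X)+H=3H$ as required.
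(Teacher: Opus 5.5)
Your proposal is correct and follows the paper's proof. Both arguments use orthogonality of the mutation to $\OO_X(-H)$ to replace the self-Ext of $\cE$ by relative Hom into $\cI$, extract $0\to T_X\to j^*T_N\to W\otimes\cL\xrightarrow{\alpha}\bigwedge^2T_X$ from the mutation triangle, and identify $\ker\alpha$ with $K\otimes\OO_X(H)$. The only differences are minor: you verify the fiberwise kernel by a direct Koszul computation (the composite $s\circ\epsilon$ is the class of $s$ modulo second-order vanishing) where the paper cites \cite[Lemma~4.9]{APR}, and the paper globalizes by noting that $\alpha$ vanishes on $K\otimes\cL$ fiberwise over the reduced base with both kernels of constant rank three, which is exactly what your pointwise calculation supplies.
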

\begin{proof}
The relative mutation is orthogonal to $p_1^*A$, so applying relative Hom to \eqref{eq:relativeE} identifies the self-Ext complex of $\cE$ with the relative Hom complex from $\cE$ to $\cI$. Apply $R\pi_*R\mathcal Hom(-,\cI)$ to the same triangle. Its degree-one cohomology gives an exact sequence
\begin{equation}\label{eq:normalexact}
0\longrightarrow T_X\longrightarrow j^*T_{N^{\mathrm{Ku}}_5}
\longrightarrow W\otimes\cL
\xrightarrow{\alpha}\bigwedge^2T_X.
\end{equation}
Here $\cExt^1_\pi(\cI,\cI)=T_X$ and $\cExt^2_\pi(\cI,\cI)=\bigwedge^2T_X$ are the usual universal-point Ext bundles, obtained from the diagonal Koszul resolution. Also $\pi_*(\cI\otimes p_1^*\OO_X(H))=W$. The first arrow is the Kodaira--Spencer map of the point family. The map $\alpha$ is composition with the universal coevaluation class.

Trivialize $\cL$ near a point. On a fiber, the map $\alpha$ is precisely the Yoneda map whose kernel is identified in \cite[Lemma 4.9]{APR}: a section of $I_p(H)$ is in the kernel if and only if its hyperplane section is singular at $p$. Thus its kernel is $K_p$. Changing the trivialization of $\cL$ restores the tensor factor, so the bundle kernel in \eqref{eq:normalexact} is $K\otimes\cL$. More explicitly, the restriction of $\alpha$ to this subbundle vanishes on every closed fiber, hence vanishes as a bundle morphism; both it and the kernel have constant rank three. Taking the quotient by $T_X$ proves the assertion using Lemma~\ref{lem:mutationline}.
\end{proof}

\begin{remark}\label{rem:cherncheck}
The jet sequence gives $c_1(K)=-2H$, hence $c_1(K(H))=H$ and $c_1(j^*T_N)=3H$, as required by adjunction.
\end{remark}

\begin{corollary}\label{cor:abstractblowup}
Define $\Ntilde=\Bl_{j(X)}N^{\mathrm{Ku}}_5$. Its exceptional divisor is naturally
\[
J\simeq\PP_X(K(H))\simeq\PP_X(K).
\]
Under the last identification, with $q:J\to X$,
\begin{equation}\label{eq:exceptionalnormal}
N_{J/\Ntilde}\simeq\OO_{\PP_X(K)}(-1)\otimes q^*\OO_X(H).
\end{equation}
\end{corollary}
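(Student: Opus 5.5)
This is a direct consequence of Proposition~\ref{prop:normal} together with the standard description of the exceptional divisor of a blow-up along a smooth center. Since $N=N^{\mathrm{Ku}}_5$ is smooth and $j(X)$ is a smooth closed subvariety of codimension three, $\Ntilde=\Bl_{j(X)}N$ is smooth. Its exceptional divisor is the projectivized normal bundle $J=\PP_X(N_{X/N})$, and the normal bundle of $J$ is the tautological line subbundle $\OO_{\PP(N_{X/N})}(-1)$. The convention here is that projective bundles parametrize lines. The plan is to substitute $N_{X/N}\simeq K(H)$ and then untwist.

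\textbf{Step 1: identify $J$.} Proposition~\ref{prop:normal} gives $J\simeq\PP_X(K\otimes\OO_X(H))$. For any vector bundle $E$ and line bundle $M$, lines in $E_p$ and lines in $E_p\otimes M_p$ correspond canonically. This gives an isomorphism $\PP_X(E\otimes M)\simeq\PP_X(E)$ over $X$. Under it the tautological subbundles satisfy
\[
\OO_{\PP(E\otimes M)}(-1)\simeq\OO_{\PP(E)}(-1)\otimes q^*M .
\]
Applying this with $E=K$ and $M=\OO_X(H)$ yields $J\simeq\PP_X(K)$.

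\textbf{Step 2: the normal bundle of $J$.} By the blow-up formula, $N_{J/\Ntilde}\simeq\OO_J(J)\simeq\OO_{\PP(N_{X/N})}(-1)$. Transporting this through the identification in Step 1 gives
\[
N_{J/\Ntilde}\simeq\OO_{\PP_X(K)}(-1)\otimes q^*\OO_X(H),
\]
which is \eqref{eq:exceptionalnormal}.

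The only delicate point is the twist bookkeeping: one must keep the line-parametrizing convention consistent, so that the twist enters as $q^*M$ rather than $q^*M^\vee$. I would confirm this by checking the fiberwise tautological inclusion $\ell\otimes M_p\subset E_p\otimes M_p$. The word \emph{naturally} in the statement is justified because both isomorphisms used are canonical once Proposition~\ref{prop:normal} is fixed. That identification depends on the trivialization of $\cL\simeq\OO_X(H)$ only up to the scalar already tracked in Lemma~\ref{lem:mutationline}, and a scalar does not affect the projectivization.
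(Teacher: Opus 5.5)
Your proposal is correct and matches the paper's proof: both identify the exceptional divisor of the blow-up along the smooth center with $\PP_X(N_{X/N})$, whose normal line is the tautological subbundle. Both then substitute Proposition~\ref{prop:normal} and untwist via $\OO_{\PP(K(H))}(-1)\simeq\OO_{\PP(K)}(-1)\otimes q^*\OO_X(H)$ under the line-parametrizing convention.
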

\begin{proof}
The exceptional divisor of a blow-up along a smooth center is the projectivized normal bundle, and its normal line is the tautological subbundle. For the line convention, tensoring $K$ by $\OO_X(H)$ tensors its tautological line by the same pullback.
\end{proof}
We now identify the normalized wall graph by computing its support base ideal.

\subsection{The normalized inverse root-wall graph}\label{subsec:graph}
Let $N=N^{\mathrm{Ku}}_5$, let $\mathcal I=I_{j(X)}$, and let $\mathcal R_{\mathrm{root}}$ be the reduced closure in $M_H(w)$ of the root-sheaf locus. By \cite[Propositions~4.1 and 4.11]{APR}, the wall contraction restricts to a proper birational morphism
\[
s_{\mathrm{root}}:\mathcal R_{\mathrm{root}}\longrightarrow N
\]
which is an isomorphism over $N\setminus j(X)$. Its boundary points are the sheaves $I_{p/S}$ with $S$ singular at $p$, and $s_{\mathrm{root}}(I_{p/S})=E_p$. Denote its rational inverse by $\rho^{-1}$. We compute regular homogeneous equations for the rational support map before considering its graph.

\medskip
\noindent\textbf{The determinant section.}
The description in \cite[Section~5.1]{APR} realizes $N$ as the moduli space of stable $3$-Kronecker representations with dimension vector $(3,2)$. Choose universal multiplicity bundles $\mathcal A_3$ and $\mathcal B_2$ on $N$, of ranks three and two. Such a choice is possible because $\gcd(3,2)=1$; equivalently one may work on \'etale charts with universal representations and descend the constructions below. With $p_X,p_N$ the projections, the universal object is represented in degrees $-1,0$ by
\begin{equation}\label{eq:universalKronecker}
\boldsymbol\phi:
p_X^*\cU\otimes p_N^*\mathcal A_3
\longrightarrow
p_X^*\cQ^\vee\otimes p_N^*\mathcal B_2.
\end{equation}
Both terms have rank six. Since $\det\cU=\det\cQ^\vee=\OO_X(-H)$, put
\[
\mathcal M=(\det\mathcal B_2)^{\otimes3}
                 \otimes(\det\mathcal A_3)^{\otimes(-2)}.
\]
Taking the determinant in \eqref{eq:universalKronecker} gives
\begin{equation}\label{eq:determinantsupport}
\Delta=\det\boldsymbol\phi
\in H^0\bigl(X\times N,p_X^*\OO_X(H)\otimes p_N^*\mathcal M\bigr)
 =H^0(N,V\otimes\mathcal M).
\end{equation}
A common base-line twist of the multiplicity bundles cancels from $\mathcal M$ and this construction. In the local quotient description the same assertion follows because the determinant character is trivial on the common scalar stabilizer.

At a root object $F=i_*\OO_S(D)$, the map $\phi_F$ is injective and has cokernel $F$. At the generic point of $S$, this cokernel has length one over the discrete valuation ring of $X$. Thus $\det\phi_F$ cuts out $S$ with multiplicity one. In particular $[\Delta(F)]$ is the actual support hyperplane. At $E_p$ the cohomology of \eqref{eq:universalKronecker} instead gives, after choosing the scalar identifications,
\begin{equation}\label{eq:pointKronecker}
0\longrightarrow A\xrightarrow{i_p}\cU^{\oplus3}
\xrightarrow{\phi_p}(\cQ^\vee)^{\oplus2}
\xrightarrow{q_p}I_p\longrightarrow0,
\qquad A=\OO_X(-H).
\end{equation}
Consequently $\Delta(E_p)=0$. Since $j(X)$ is reduced, the restriction of $\Delta$ to $j(X)$ is the zero section. For a local frame of $\mathcal M$ and a basis $s_0,\ldots,s_6$ of $V$, write
\begin{equation}\label{eq:determinantcoefficients}
\Delta=\sum_{i=0}^6 g_i s_i,\qquad
\mathcal J_\Delta=(g_0,\ldots,g_6)\subset\mathcal I.
\end{equation}
These coefficient ideals glue. They are the unit ideal off $j(X)$.

\begin{lemma}\label{lem:firstordersupport}
Fix $p\in X$. With compatible choices of the one-dimensional factors, the differential of \eqref{eq:determinantsupport} at $E_p$ is the map
\begin{equation}\label{eq:determinantnormal}
d\Delta_{E_p}:\Ext^1(E_p,E_p)
\xrightarrow{\ \beta_p\ } K_p\hookrightarrow V,
\qquad K_p=H^0(I_p^2(H)),
\end{equation}
where $\beta_p$ is the Yoneda quotient associated with
$A[1]\xrightarrow{a_p}E_p\xrightarrow{e_p}I_p$:
\[
\beta_p(\xi)=e_p[1]\circ\xi\circ a_p
\in\Hom(A,I_p).
\]
In particular, $d\Delta_{E_p}$ has rank three and kernel $T_pX$. Restoring the line factor gives the normal bundle $K(H)$ of Proposition~\ref{prop:normal}.
\end{lemma}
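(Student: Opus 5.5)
The plan is to compute the first-order variation of $\det\boldsymbol\phi$ along a tangent vector at $E_p$ directly from the Kronecker complex \eqref{eq:universalKronecker}. I would then identify the result with a Yoneda composite through the exact sequence \eqref{eq:pointKronecker}.

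\textbf{Step 1: reduce to the Kronecker tangent space.} Work on an étale chart of $N$ around $E_p$ carrying a universal representation. A tangent vector $\xi\in\Ext^1(E_p,E_p)$ is then represented by a first-order deformation $\phi_p+\varepsilon\dot\phi$, with $\dot\phi\in\Hom(\cU^{\oplus3},(\cQ^\vee)^{\oplus2})$, modulo the infinitesimal $\mathrm{GL}_3\times\mathrm{GL}_2$ action. Because $\Delta(E_p)=0$, the differential is well defined independently of the frame of $\mathcal M$. The determinant character is trivial on scalars, and $\Delta$ vanishes at $E_p$, so the gauge directions contribute nothing and the differential descends to $\Ext^1(E_p,E_p)$.

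\textbf{Step 2: the Jacobi formula.} The map $\phi_p$ is a map of rank-six bundles whose kernel is the line bundle $A$ and whose cokernel is the rank-one sheaf $I_p$. So $\phi_p$ has corank exactly one generically, and $\operatorname{adj}(\phi_p)$ factors as
\[
(\cQ^\vee)^{\oplus2}\xrightarrow{q_p}I_p\subset\OO_X\ \cdots\ \OO_X(H)\otimes A\xrightarrow{i_p}\cU^{\oplus3}\otimes\det(\cdot).
\]
More precisely, $\operatorname{adj}(\phi_p)=i_p\circ q_p$ up to the determinant twists. Jacobi's formula $\tfrac{d}{d\varepsilon}\det(\phi_p+\varepsilon\dot\phi)=\operatorname{tr}(\operatorname{adj}(\phi_p)\dot\phi)$ then becomes
\[
d\Delta(\dot\phi)=q_p\circ\dot\phi\circ i_p\in\Hom(A,I_p)=H^0(I_p(H))\subset V.
\]
I expect this factorization of the adjugate, with the correct scalars, to be the main obstacle. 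My first attempt would be to check it on the open set where $I_p=\OO_X$, that is, away from $p$, where it is the standard statement for a corank-one matrix. It then extends because both sides are maps between torsion-free sheaves.

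\textbf{Step 3: match with the Yoneda map and locate the image.} The two-term complex \eqref{eq:universalKronecker} at $E_p$ has $H^{-1}=A$ and $H^0=I_p$. These identifications realize $a_p$ and $e_p$ as inclusion and projection of cohomology. The composite $q_p\dot\phi i_p$ is the standard description of $e_p[1]\circ\xi\circ a_p$ for a deformation of a two-term complex, so $d\Delta_{E_p}=\beta_p$ with compatible scalar choices.

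Next I would show the image lies in $K_p$ and the kernel is $T_pX$. The map $\xi\mapsto e_p[1]\xi a_p$ is the map $j^*T_N\to W\otimes\cL$ appearing in \eqref{eq:normalexact}. Its kernel is the image of $T_X$, namely the Kodaira--Spencer directions moving $p$, along which $\Delta$ vanishes identically. Its image is the kernel of $\alpha$, which is $K_p$ by Proposition~\ref{prop:normal} and \cite[Lemma 4.9]{APR}. Counting dimensions, $6-3=3=\dim K_p$, so $\beta_p$ surjects onto $K_p$ with rank three.

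\textbf{Step 4: globalize.} Letting $p$ vary, the conormal map $\mathcal I/\mathcal I^2\leftarrow V^\vee\otimes\mathcal M^\vee$ induced by $\Delta$ is the dual of the surjection $N_{X/N}\to K\otimes\mathcal M|_X$. Comparing with Proposition~\ref{prop:normal} forces $\mathcal M|_X\simeq\cL=\OO_X(H)$, up to a line bundle that the first Chern class check of Remark~\ref{rem:cherncheck} pins down. This restores the factor and gives $K(H)$.
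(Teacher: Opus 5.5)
Your proposal is correct and follows the paper's proof essentially step for step. You represent tangent vectors as arrow variations modulo gauge, identify the $\varepsilon$-coefficient of $\det(\phi_p+\varepsilon\dot\phi)$ off $p$ with the kernel-to-cokernel composite $q_p\dot\phi\, i_p$, extend across the codimension-three point, and read off rank three, kernel $T_pX$ and image $K_p$ from the fiber of \eqref{eq:normalexact}, i.e.\ \cite[Lemma~4.9]{APR}; your Jacobi/adjugate factorization is just the frame-free form of the paper's local normalization $\phi_p=\operatorname{diag}(I_5,0)$, and your Step~4 is bookkeeping the paper leaves implicit, most cleanly done by taking determinants in $K\otimes\mathcal M|_X\simeq K(H)$ and using that $\operatorname{Pic}(X)=\ZZ H$ is torsion-free.
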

\begin{proof}
A tangent vector $\xi$ can be represented by a first-order deformation
$\phi_p+\varepsilon\dot\phi$ of the quiver map. Indeed, the stable-representation quotient gives the tangent space as variations of the arrows modulo infinitesimal changes of bases. In the two-term complex, the composite defining $\beta_p$ is exactly
\begin{equation}\label{eq:kernelcokernel}
A\xrightarrow{i_p}\cU^{\oplus3}
\xrightarrow{\dot\phi}(\cQ^\vee)^{\oplus2}
\xrightarrow{q_p}I_p.
\end{equation}
Changing $\dot\phi$ by an infinitesimal change of bases adds terms factoring through $\phi_p$, which vanish after composition with $i_p$ or $q_p$. Thus this computation is independent of the tangent representative.

We compare \eqref{eq:kernelcokernel} with the differential of the actual determinant section. On $X\setminus\{p\}$, sequence~\eqref{eq:pointKronecker} has locally free kernel and cokernel of rank one, and $\phi_p$ has constant rank five. Locally split off its rank-five isomorphism and choose compatible frames so that
\[
\phi_p=\begin{pmatrix}I_5&0\\0&0\end{pmatrix}.
\]
The coefficient of $\varepsilon$ in $\det(\phi_p+\varepsilon\dot\phi)$ is then the bottom-right entry of $\dot\phi$. Under the determinant-line identification supplied by the exact kernel--image--cokernel sequences, this entry is the map from the kernel to the cokernel. It follows that
\begin{equation}\label{eq:determinantderivative}
d(\det)_{\phi_p}(\dot\phi)
=\bigl[A\xrightarrow{\ q_p\dot\phi i_p\ }I_p
                \hookrightarrow\OO_X\bigr]
\quad\text{in }H^0(X,\OO_X(H)),
\end{equation}
up to the fixed nonzero scalar from the line identifications. To justify the equality globally, the determinant-line identification on $X\setminus\{p\}$ extends across $p$: both sides are line bundles on the smooth variety $X$, and the omitted set has codimension three. Any resulting automorphism of $\OO_X(H)$ is a scalar. The two global sections therefore agree once that scalar is fixed; no rank assumption at $p$ itself is needed.

Finally, \cite[Lemma~4.9]{APR}, equivalently the fiber of \eqref{eq:normalexact}, gives
\[
0\longrightarrow\Ext^1(I_p,I_p)\longrightarrow\Ext^1(E_p,E_p)
\xrightarrow{\beta_p}
\ker\!\left(H^0(I_p(H))\xrightarrow{\alpha_p}\Ext^2(I_p,I_p)\right)
\longrightarrow0.
\]
Here the left map is the derivative of $j$, and the kernel on the right consists of the sections whose first jet at $p$ vanishes, namely $K_p$. Combining this exact sequence with \eqref{eq:determinantderivative} proves \eqref{eq:determinantnormal}. This argument applies to every first-order tangent vector and does not presuppose any transverse one-parameter family.
\end{proof}

\begin{lemma}\label{lem:graphideal}
The coefficient ideal of the determinant support section is exactly the reduced point-locus ideal:
\[
\mathcal J_\Delta=I_{j(X)}.
\]
Thus on a sufficiently small trivializing chart the support map has homogeneous coordinates $g_0,\ldots,g_6$ with $(g_0,\ldots,g_6)=I_{j(X)}$.
\end{lemma}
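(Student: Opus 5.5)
We prove the two inclusions $\mathcal J_\Delta\subset I_{j(X)}$ and $I_{j(X)}\subset\mathcal J_\Delta$ separately, working locally at a point $E_p=j(p)$ of the point locus. Off $j(X)$ both ideals are the unit ideal: the coefficient ideal because $\Delta(F)$ is the nonzero equation of the support hyperplane of a root object, and the point-locus ideal trivially. So it suffices to compare them in the local ring $\OO_{N,E_p}$, or in its completion, where $N$ is smooth of dimension six and $j(X)$ is a smooth threefold.

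\emph{First inclusion.} The inclusion $\mathcal J_\Delta\subset\mathcal I$ was already recorded in \eqref{eq:determinantcoefficients}. Since $\Delta(E_p)=0$ for every $p$ and $j(X)$ is reduced, each $g_i$ vanishes on $j(X)$. Hence each $g_i$ lies in $I_{j(X)}$.

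\emph{Reverse inclusion.} Choose local coordinates $(y_1,y_2,y_3,z_1,z_2,z_3)$ at $E_p$ with $I_{j(X)}=(z_1,z_2,z_3)$. Write $g_i=\sum_k a_{ik}z_k$ with $a_{ik}\in\OO_{N,E_p}$; this is possible because $g_i\in I_{j(X)}$. The differentials $dg_i$ at $E_p$ are the components of $d\Delta_{E_p}$ with respect to the basis $s_0,\ldots,s_6$. By Lemma~\ref{lem:firstordersupport}, $d\Delta_{E_p}$ has rank three, with kernel $T_pX=T_{E_p}j(X)$. It follows that the $7\times3$ matrix $(a_{ik}(E_p))$ has rank three. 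By Nakayama's lemma, the images of the $g_i$ then span $I_{j(X)}/\mathfrak m I_{j(X)}$, and therefore the $g_i$ generate $I_{j(X)}$ near $E_p$. Equivalently, a $3\times3$ minor of $(a_{ik})$ is a unit near $E_p$, so three of the $g_i$ are obtained from $z_1,z_2,z_3$ by an invertible matrix. Since $p$ is arbitrary, $\mathcal J_\Delta=I_{j(X)}$ along all of $j(X)$.

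\emph{Gluing and the final assertion.} The coefficient ideal is independent of the local frame of $\mathcal M$ and of the basis of $V$, since changing either multiplies the vector $(g_i)$ by an invertible matrix or a unit. The identity therefore globalizes. The statement about homogeneous coordinates is then a restatement on a trivializing chart: the support map $F\mapsto[\Delta(F)]\in\PP(V)$ is given by $[g_0:\cdots:g_6]$, and its base ideal equals $I_{j(X)}$.

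\emph{Main obstacle.} The only nontrivial input is the rank computation of the differential. It is supplied by Lemma~\ref{lem:firstordersupport}, including the point that the determinant-section derivative equals the Yoneda map $\beta_p$ at every first-order tangent vector, not just along chosen curves. What remains to check is that the scalar and line identifications in that lemma are nonzero at every $p$. This ensures the rank-three statement holds pointwise for the actual section $\Delta$, rather than only up to a possibly vanishing factor. Once that is in hand, the rest is the Nakayama argument above.
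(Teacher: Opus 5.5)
Your proof is correct and follows essentially the same route as the paper: the rank-three (injective normal) differential from Lemma~\ref{lem:firstordersupport} makes the coefficient/conormal map onto $I_{j(X)}$ surjective modulo higher order, and Nakayama upgrades this to equality of ideals. The only difference is cosmetic. You apply Nakayama to $I/\mathfrak m I$ at the single point $E_p$, whereas the paper derives $J_\Delta+I^2=I$ and applies Nakayama to $Q=I/J_\Delta$ with $Q=IQ$.
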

\begin{proof}
The inclusion $\mathcal J_\Delta\subset\mathcal I$ was established in \eqref{eq:determinantcoefficients} using an actual regular determinant section. Since that section vanishes along $j(X)$, its coefficient classes define a conormal map
\[
(V^\vee\otimes\mathcal M^\vee)|_{j(X)}
\longrightarrow\mathcal I/\mathcal I^2.
\]
On every fiber its dual is the injective normal differential in Lemma~\ref{lem:firstordersupport}. Hence this conormal map is surjective. At the local ring $R=\OO_{N,E_p}$, writing $I=\mathcal I_{E_p}$ and $J_\Delta=(\mathcal J_\Delta)_{E_p}$, we obtain
\[
J_\Delta+I^2=I.
\]
For the finite $R$-module $Q=I/J_\Delta$, this says $Q=IQ$. As $I$ is contained in the maximal ideal of $R$, Nakayama's lemma gives $Q=0$. The two ideals therefore agree at every point of $j(X)$; off $j(X)$ both are the unit ideal. This proves equality of ideal sheaves, rather than only equality of their radicals or of their projectivized normal directions.
\end{proof}

\begin{theorem}\label{thm:graphblowup}
Let $\Gamma_{\rho^{-1}}$ be the reduced graph closure of the inverse principal root-wall map. Then
\[
\operatorname{Norm}(\Gamma_{\rho^{-1}})
\simeq\Bl_{j(X)}N^{\mathrm{Ku}}_5=\Ntilde.
\]
Its exceptional divisor is $J\simeq\PP_X(K(H))\simeq\PP_X(K)$, and $(p,[s])\in J$ records the limiting boundary root sheaf $I_{p/S_s}$.
\end{theorem}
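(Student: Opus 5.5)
The plan is to use Lemma~\ref{lem:graphideal} to show that the rational map $\rho^{-1}$ is resolved precisely by blowing up $j(X)$, and then to compare the graph closure with $\Ntilde$. Away from $j(X)$, Theorem~\ref{thm:exactprincipal} says $\rho$ is an isomorphism, so $\rho^{-1}$ is a morphism there. The first step is to factor $\rho^{-1}$ through the support map $N\dashrightarrow\PP(V)$ given by $[g_0:\cdots:g_6]$. By Lemma~\ref{lem:graphideal}, the base ideal of this map is exactly $I_{j(X)}$. Since $j(X)$ is smooth, the base ideal pulls back on $\Ntilde$ to the invertible ideal $\OO(-J)$. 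The support map therefore extends to a morphism $\Ntilde\to\PP(V)$. On $J$ this morphism is the projectivized normal differential, which by Lemma~\ref{lem:firstordersupport} sends $(p,[s])\in\PP_X(K)$ to $[s]\in\PP(K_p)\subset\PP(V)$.

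The second step is to lift this to a morphism $\Ntilde\to\mathcal R_{\mathrm{root}}$ extending $\rho^{-1}$. A point of $\mathcal R_{\mathrm{root}}$ near the boundary is a sheaf $F=i_*\OO_S(D)$ or $I_{p/S}$, and it is determined by the object of $N$ together with its support hyperplane. I would use the map
\[
\mathcal R_{\mathrm{root}}\longrightarrow N\times\PP(V),\qquad F\longmapsto(\rho(F),[\operatorname{supp}F]).
\]
Over $N\setminus j(X)$ this is the graph of the support map. Over $E_p$ it sends $I_{p/S}$ to $(E_p,[S])$ with $[S]\in\PP(K_p)$, and by Theorem~\ref{thm:exactprincipal} it is injective on that fiber. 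So this map is a bijective proper morphism from $\mathcal R_{\mathrm{root}}$ onto the closure of the graph of the support map. That closure is the image of $\Ntilde$, namely the blow-up of the base ideal, which is $\Ntilde$ itself.

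Consequently $\Gamma_{\rho^{-1}}$ and $\Ntilde$ are both birational over $N$ and are related through finite, bijective maps to the support-map graph. Since $\Ntilde$ is smooth, hence normal, the normalization of $\Gamma_{\rho^{-1}}$ is $\Ntilde$ by Zariski's main theorem. Concretely, the induced morphism $\Ntilde\to\Gamma_{\rho^{-1}}$ is finite and birational onto the reduced graph closure. The description of $J$ then follows from Corollary~\ref{cor:abstractblowup}. The point $(p,[s])$ maps to the unique boundary sheaf over $E_p$ with support $S_s$, namely $I_{p/S_s}$.

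I expect the main obstacle to be producing the morphism $\Ntilde\to\mathcal R_{\mathrm{root}}$ itself, rather than only a map to $N\times\PP(V)$. I would first try to argue that $\mathcal R_{\mathrm{root}}\to N\times\PP(V)$ is finite, since it is proper and quasi-finite by the fiber description. The induced map from the normalization of $\Gamma_{\rho^{-1}}$ is then finite and birational onto the normal variety $\Ntilde$. Zariski's main theorem makes it an isomorphism, and this avoids having to construct a universal family on $\Ntilde$ directly.
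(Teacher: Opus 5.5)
Your route is the paper's. You identify the closure $G$ of the graph of $[\Delta]$ with $\Bl_{j(X)}N$ through Lemma~\ref{lem:graphideal} and the Rees algebra. You map the root component to $N\times\PP(V)$ by the pair (contraction, support), and observe that the image lies in $G$ by reducedness and density. The map is proper, an isomorphism over $N\setminus j(X)$, and injective over each $E_p$ by the APR boundary classification, hence finite. You then conclude with Zariski's main theorem because $G$ is normal. One small point: the morphism you actually construct goes $\Gamma_{\rho^{-1}}\to G$. The map $\Ntilde\to\Gamma_{\rho^{-1}}$ in your third paragraph is only the normalization map, available once the identification is proved.

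What is missing is a proof that $F\mapsto[\operatorname{supp}F]$ is a morphism $\mathcal R_{\mathrm{root}}\to\PP(V)$. You simply use it, and it is exactly the step the paper has to construct. It is not automatic, for three reasons.
\begin{enumerate}
\item The Fitting support is not a relative hyperplane at the boundary: locally at $p$ one finds $\operatorname{Fitt}_0(i_*I_{p/S})=s\,\mathfrak m_p$.
\item The annihilator does not commute with base change.
\item A pointwise-defined map that is regular on the dense root locus need not be regular on a reduced variety that may fail to be normal. This is the very reason the theorem is stated only up to normalization.
\end{enumerate}

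The paper builds the support map $h$ at family level. Every fiber $\mathcal G_r$ is rank-one torsion-free on an integral normal hyperplane section $S$. Hence $\mathcal Hom_{\OO_S}(\mathcal G_r,\mathcal G_r)\simeq\OO_S$, and $\Hom_X(\mathcal G_r,\mathcal G_r(H))\simeq H^0(S,\OO_S(H))$ has constant dimension six. On the reduced base, the degree-zero cohomology of $R\pi_{R*}\RHom(\mathcal G,\mathcal G(H))$ is therefore locally free and commutes with base change. Multiplication $V\otimes\OO_{\mathcal R_{\mathrm{root}}}\to\mathcal H^0$ is then a surjection of bundles whose kernel is a line subbundle, and this kernel defines $h$.

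Alternatively, you could work on $\operatorname{Norm}(\mathcal R_{\mathrm{root}})$ from the start. The incidence $\{(r,[t]):t\cdot\mathcal G_r=0\}$ is closed in $\mathcal R_{\mathrm{root}}\times\PP(V)$ and has the single point $[s]$ over $I_{p/S}$. So the graph closure of the support map is bijective over the normal variety $\operatorname{Norm}(\mathcal R_{\mathrm{root}})$, and Zariski's main theorem makes the support map regular there. That is all the statement needs. With either repair, the rest of your argument goes through as written.
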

\begin{proof}
Let $G$ be the reduced closure of the graph of $[\Delta]:N\setminus j(X)\to\PP(V)=|H|$. On a trivializing chart, the closure of a projective graph is the Proj of the Rees algebra of its coefficient ideal. Lemma~\ref{lem:graphideal} therefore gives
\[
G\simeq\Proj_N\bigoplus_{m\ge0}\mathcal I^m
=\Bl_{j(X)}N.
\]
The local isomorphisms glue: changing a frame of $\mathcal M$ multiplies all coordinates by a unit, and each isomorphism restricts to the same graph over $N\setminus j(X)$. Since both $N$ and $j(X)$ are smooth, $G$ is smooth and normal.

We also need a regular support morphism on the entire reduced root component. Here is a family-level construction. Let $\mathcal G$ be the universal Gieseker sheaf pulled back to $X\times\mathcal R_{\mathrm{root}}$, and write $\pi_R$ for the projection. Every fiber is a rank-one torsion-free sheaf on its integral normal hyperplane support $S$, by \cite[Lemma~3.2 and Proposition~4.1]{APR}. Thus
\[
\mathcal{H}om_{\OO_S}(\mathcal G_r,\mathcal G_r)\simeq\OO_S,
\qquad
\Hom_X(\mathcal G_r,\mathcal G_r(H))\simeq H^0(S,\OO_S(H)).
\]
The latter space has dimension six, and multiplication by sections gives a surjection
$V\to H^0(S,\OO_S(H))$ whose kernel is the support-equation line. The perfect relative Hom complex has no negative fiber cohomology. Since its degree-zero fiber dimension is constant and the base $\mathcal R_{\mathrm{root}}$ is reduced, its degree-zero cohomology is locally free and commutes with base change. Explicitly, a local nonnegative complex representing it has a first differential of constant rank; its larger minors vanish on the reduced base, so its kernel is a subbundle. Consequently the multiplication map
\[
V\otimes\OO_{\mathcal R_{\mathrm{root}}}
\longrightarrow
\mathcal H^0\!\left(R\pi_{R*}\RHom(\mathcal G,\mathcal G(H))\right)
\]
is a surjection of vector bundles with a line-bundle kernel. This kernel defines a regular support morphism
$h:\mathcal R_{\mathrm{root}}\to\PP(V)$. The construction is unchanged by a base-line twist of $\mathcal G$.

The pair consisting of the projection to $N$ and $h$ induces a proper morphism
\[
\Gamma_{\rho^{-1}}\longrightarrow G.
\]
Its image factors through $G$ because the source is reduced and the asserted factorization holds on the dense root locus. It is an isomorphism over $N\setminus j(X)$. By the APR boundary classification, over a fixed pair $(E_p,[s])$ there is exactly one possible geometric point, represented by $I_{p/S_s}$. Thus its geometric fibers are finite, so it is quasi-finite, hence finite by properness. Normalization is finite for varieties over $\CC$, and we obtain a finite birational morphism
\[
\operatorname{Norm}(\Gamma_{\rho^{-1}})\longrightarrow G.
\]
It is an isomorphism because $G$ is normal. The exceptional divisor and its support label follow from Lemma~\ref{lem:firstordersupport}, Proposition~\ref{prop:normal}, and Corollary~\ref{cor:abstractblowup}.
\end{proof}

\begin{remark}\label{rem:contactorder}
For an arc with $\gamma^*\mathcal I=(t^e)$, Lemma~\ref{lem:graphideal} writes $\gamma^*\Delta=t^e(s_0+O(t))$ with $0\ne s_0\in K_p$. Its limiting support is $[s_0]$; the first-order normal class detects this line only for $e=1$.

For later use, let $\pi:\Ntilde\to N$ be the blow-up morphism. Its differential does give an injective map on the normal line at each $b=(p,[s])\in J$:
\begin{equation}\label{eq:blowupnormaldifferential}
N_{J/\Ntilde,b}\hookrightarrow N_{X/N,p},
\qquad\im(d\pi)=\ell_b,
\end{equation}
where $\ell_b$ is the tautological line corresponding to $[s]\in\PP(K_p)$, with the twist in \eqref{eq:exceptionalnormal}. To see this directly, choose local normal coordinates $x_1,x_2,x_3$ for $j(X)$. On a blow-up chart,
\[
x_1=t,\qquad x_2=tu_2,\qquad x_3=tu_3,\qquad J=(t=0).
\]
Modulo $T_pX$, the differential sends $\partial_t$ to $(1,u_2,u_3)$, which is nonzero and spans $\ell_b$. The other charts give the same intrinsic tautological inclusion. This nonvanishing follows from the proved blow-up description, not from an assumption about the order of a test family.
\end{remark}

\section{The determinantal incidence variety}\label{sec:det}
This section establishes the geometry of the determinantal scheme $\Sigma$ in Theorem~\ref{thm:mainambient}(iii). Its identification with the actual scheme-theoretic moduli intersection is proved in Section~\ref{sec:ambient}.

The first-jet sequence gives $W/K\simeq\Omega_X^1(H)$, of rank three. On the smooth elevenfold
\[
Y=\Gr_X(2,W)
\]
let $\cS$ be the tautological rank-two bundle and $\pi_Y:Y\to X$ the projection. Define
\begin{equation}\label{eq:Sigma}
\Sigma=D_1\bigl(\cS\to\pi_Y^*(W/K)\bigr),\qquad Z=\Gr_X(2,K),
\end{equation}
using the ideal of all $2\times2$ minors. The further embedding $Y\hookrightarrow X\times\Gr(2,V)$ is the zero scheme of the universal evaluation map $\cS\to\OO_X(H)$.

\begin{proposition}\label{prop:Sigma}
The scheme $\Sigma$ is irreducible, normal, Cohen--Macaulay, and of dimension nine. Its reduced singular locus is $Z$, of dimension five. Its incidence resolution is
\begin{equation}\label{eq:incidence}
\widetilde\Sigma=\{(p,\ell,L):\ell\subset K_p,\ \ell\subset L\subset W_p\}
\longrightarrow\Sigma.
\end{equation}
It is a $\PP^4$-bundle over $\PP_X(K)$, and has fiber $\PP^1$ over $Z$ and a single reduced point elsewhere.
\end{proposition}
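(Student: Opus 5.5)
The plan is to reduce everything to the classical generic determinantal variety of $2\times3$ matrices of rank at most one, by working relatively over $X$. Over $Y=\Gr_X(2,W)$ the map $\cS\to\pi_Y^*(W/K)$ is a map from a rank-two to a rank-three bundle. Locally on $Y$ I will trivialize both bundles. The key input is that this map is locally a smooth (submersive) pullback of the universal $2\times3$ matrix. I would check this fiberwise over $p\in X$: on $\Gr(2,W_p)$ the map $L\mapsto(L\to W_p/K_p)$ is the tautological map $\cS\to W_p/K_p\otimes\OO$. Its differential at $L$ is the natural map $\Hom(L,W_p/L)\to\Hom(L,W_p/K_p)$, which is surjective because $W_p\to W_p/K_p$ is surjective. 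Hence the classifying map from $Y$ to the space $\Mat_{2\times3}$ is smooth. Then $\Sigma$ is, smooth-locally, the product of a smooth factor with the affine cone $D_1\subset\Mat_{2\times3}$.

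From this local model the properties follow from standard facts about $D_1(2\times3)$: it is normal, Cohen--Macaulay (Eagon--Northcott/Hochster--Eagon), of codimension $(2-1)(3-1)=2$, and singular exactly at the zero matrix. So $\Sigma$ is normal and Cohen--Macaulay of dimension $11-2=9$. Its reduced singular locus is the locus where the $2\times3$ map vanishes, i.e.\ $L\subset K_p$, which is $Z=\Gr_X(2,K)$ of dimension $3+\dim\Gr(2,3)=5$. This also gives the transverse $2\times3$ rank-one description needed in Theorem~\ref{thm:mainambient}(iii).

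For irreducibility and the resolution, I would construct $\widetilde\Sigma$ directly. Over $\PP_X(K)$ (dimension $3+2=5$) with tautological line $\ell$, the planes $L$ with $\ell\subset L\subset W_p$ form $\PP(W_p/\ell)\simeq\PP^4$, since $\dim W_p=6$. So $\widetilde\Sigma$ is a smooth irreducible $\PP^4$-bundle of dimension nine, and it maps onto $\Sigma$: $\dim(L\cap K_p)\ge1$ is exactly the rank condition. The fiber over $(p,L)$ is $\PP(L\cap K_p)$. This is a point when $\dim(L\cap K_p)=1$ and $\PP^1$ when $L\subset K_p$. Since $\Sigma$ is the image of an irreducible variety, it is irreducible as a set, and its reducedness follows from Cohen--Macaulayness plus generic reducedness. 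Generic reducedness holds because the local model is reduced.

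The step needing the most care is the scheme-theoretic claim that the fiber off $Z$ is a single \emph{reduced} point, so that $\widetilde\Sigma\to\Sigma$ is an isomorphism there. I would handle it again via the local model. The incidence resolution of $D_1(2\times3)$, namely $\{(\ell,M):M(\ell)=0\}$ inside $\PP^1\times\Mat$, is the standard Tjurina-type resolution and is an isomorphism over the rank-exactly-one locus; this pulls back along the smooth classifying map. One must verify that the incidence in the paper, phrased via $\ell\subset K_p$ inside $L$, matches $\ker(L\to W_p/K_p)$ as schemes. That is immediate, since $\ell\subset K_p\cap L$ is the kernel condition.
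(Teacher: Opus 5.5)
Your proposal is correct and follows essentially the paper's route: the paper likewise uses a Grassmannian chart at $L\subset K_p$, adapted to a local splitting $W=K\oplus Q$, to identify $\Sigma$ transversely to $Z$ with the generic rank-at-most-one $2\times3$ determinantal cone, and then uses the $\PP^4$-bundle $\widetilde\Sigma\to\PP_X(K)$ for irreducibility and the fibers. One precision: your differential $\Hom(L,W_p/L)\to\Hom(L,W_p/K_p)$ is frame-independent only at points with $L\subset K_p$, so it proves smoothness of the classifying map only near $Z$. At rank-one points farther from $Z$, either use a frame adapted to such a splitting, or note that your base-change argument for the incidence map needs no smoothness. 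It therefore already identifies $\Sigma\setminus Z$ with the complement of the preimage of $Z$ in the smooth ninefold $\widetilde\Sigma$.
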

\begin{proof}
Locally split $W=K\oplus Q$, with $Q$ of rank three. Near a point $L\subset K_p$, a Grassmannian chart writes a moving two-plane as a graph into $(K_p/L)\oplus Q_p$. The two coordinates into $K_p/L$ are free, while the six coordinates into $Q_p$ form a generic $3\times2$ matrix. The equations of \eqref{eq:Sigma} are exactly its maximal minors. The base point contributes three more free coordinates. Thus, transversely to $Z$, the model is $D_1(\Mat_{3\times2})$, the affine cone over $\PP^2\times\PP^1$.

At a rank-one point the rank condition is smooth of codimension two. At a rank-zero point the generic determinantal model just described has dimension four and singular locus its vertex. The rank-one generic determinantal ring is a normal Cohen--Macaulay domain (equivalently, the standard Segre ring). These charts prove the local assertions, and show that $\dim\Sigma=11-2=9$ and $\Sing(\Sigma)_{\mathrm{red}}=Z$.

In \eqref{eq:incidence}, after choosing $\ell\subset K_p$, the two-plane $L$ is determined by a line in $W_p/\ell$, a five-dimensional vector space. This makes the incidence variety smooth and irreducible of dimension $3+2+4=9$. Its proper image is the whole underlying locus of \eqref{eq:Sigma}, so $\Sigma$ is irreducible. Away from $Z$ its inverse is the kernel line of the rank-one map; over $L\subset K_p$ any line of $L$ is allowed. This proves the fiber statement and the resolution assertion.
\end{proof}

\begin{remark}\label{lem:artin}
Incidence lifting does not characterize the determinantal functor on Artin bases. Over $D=\CC[\varepsilon]/(\varepsilon^2)$, the matrix with columns $(\varepsilon,0,0)^t$ and $(0,\varepsilon,0)^t$ has zero maximal minors but kernel $\varepsilon D^2$, which contains no unimodular vector. Its $D$-point does not lift to the incidence resolution. We compute the moduli intersection from the component ideals in Section~\ref{sec:ambient}.
\end{remark}

\section{Extension spaces, base change, and the relative family}\label{sec:extensions}
This section globalizes the degree-five extension calculation over the normalized inverse-wall graph. The resulting rank-five bundle and its boundary base-change formula are the inputs used in Section~\ref{sec:ambient} to construct the actual principal-component morphism.

\subsection{Boundary Ext dimensions}
For a hyperplane section $S=(s=0)$ containing $p$, put $T_{p,S}=i_*I_{p/S}(-H)$.
\begin{lemma}\label{lem:boundaryext}
There are identifications of vector spaces
\[
\Ext^1_X(T_{p,S},A)\simeq H^0(S,\OO_S(H))\simeq V/\langle s\rangle,
\qquad \dim\Ext^2_X(T_{p,S},A)=1.
\]
Thus the dimensions are $(6,1)$, while $\Ext^0$ and $\Ext^3$ vanish.
\end{lemma}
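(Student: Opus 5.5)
Throughout, $A=\OO_X(-H)$, $T=T_{p,S}=i_*I_{p/S}(-H)$ with $i:S\hookrightarrow X$, and $S$ is a normal integral hyperplane section. The plan is to compute $\Ext^k_X(T,A)$ by Grothendieck duality for $i$, reducing everything to cohomology on $S$. Since $S$ is a Cartier divisor of class $H$, we have $i^!A=i^*A\otimes\OO_S(H)[-1]=\OO_S[-1]$. Hence
\[
\Ext^k_X(T,A)\simeq\Ext^{k-1}_S(I_{p/S}(-H),\OO_S)\simeq\Ext^{k-1}_S(I_{p/S},\OO_S(-H)).
\]
In particular $\Ext^0_X(T,A)=0$ immediately.

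For the remaining groups, apply $\Hom_S(-,\OO_S(-H))$ to $0\to I_{p/S}\to\OO_S\to\OO_p\to0$. This gives a long exact sequence involving $H^j(S,\OO_S(-H))$ and $\Ext^j_S(\OO_p,\OO_S(-H))$. The needed inputs are as follows.

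First, $S$ is an anticanonical del Pezzo-type surface. Adjunction gives $\omega_S=\OO_S(K_X+H)=\OO_S(-H)$, and $S$ is Gorenstein with rational or at worst mild singularities. We use only Cohen--Macaulay and Gorenstein here, which hold because $S$ is a hypersurface.

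Second, Serre duality on $S$ gives $H^j(S,\OO_S(-H))\simeq H^{2-j}(S,\OO_S)^\vee$. Since $H^1(\OO_S)=H^2(\OO_S)=0$, from $0\to\OO_X(-H)\to\OO_X\to\OO_S\to0$ and Kodaira vanishing on $X$, these groups vanish for $j=0,1$. For $j=2$ the group is one-dimensional.

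Third, because $S$ is Gorenstein of dimension two, we have $\Ext^j_S(\OO_p,\OO_S(-H))=0$ for $j\ne2$, and it is one-dimensional for $j=2$.

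The cleanest route for the middle groups is global Serre duality on $S$ with $\omega_S=\OO_S(-H)$:
\[
\Ext^{k-1}_S(I_{p/S},\omega_S)\simeq H^{3-k}(S,I_{p/S})^\vee.
\]
The cohomology of $I_{p/S}$ follows from the sequence $0\to I_{p/S}\to\OO_S\to\OO_p\to0$ together with $H^0(\OO_S)=\CC$ and $H^{>0}(\OO_S)=0$. The map $H^0(\OO_S)\to\OO_p$ is an isomorphism, so $H^0(I_{p/S})=H^1(I_{p/S})=H^2(I_{p/S})=0$. This would make every Ext group vanish, which contradicts the claimed dimensions $(6,1)$.

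This failure must be resolved before proceeding. The stated answer $H^0(S,\OO_S(H))\simeq V/\langle s\rangle$, which is six-dimensional, indicates that the twist runs the other way. The intended object behaves like $\Ext^1_X(T,A)\simeq\Hom_S(I_{p/S},\OO_S(H))$. Indeed $\Hom_S(I_{p/S},\OO_S(H))=H^0(\OO_S(H))$ by reflexivity and depth at $p$, since $\OO_S(H)$ is $S_2$ and $p$ has codimension two.

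The corrected duality computation is therefore
\[
\Ext^k_X(i_*G,A)\simeq\Ext^{k-1}_S(G,\OO_S(-H)\otimes\OO_S(H)),
\]
taking $\OO_S(H)$ as the normal bundle, $i^!A=A|_S\otimes N_S[-1]$. With $G=I_{p/S}(-H)$ this gives
\[
\Ext^{k-1}_S(I_{p/S},\OO_S(H)).
\]
Thus the plan is to first fix the twist convention carefully, so that $i^!\OO_X(-H)=\OO_S[-1]$ pairs with $G=I_{p/S}(-H)$ to give the target $\OO_S(H)$. With that convention the computation proceeds in two steps.

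In the first step, set $k=1$. Then $\Hom_S(I_{p/S},\OO_S(H))=H^0(\OO_S(H))$, using depth. This equals $V/\langle s\rangle$ by restriction from $X$ and $H^1(\OO_X)=0$, giving dimension $6$.

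In the second step, take $k=2,3$. Serre duality gives
\[
\Ext^{j}_S(I_{p/S},\OO_S(H))\simeq H^{2-j}(S,I_{p/S}(-2H))^\vee.
\]
From the point sequence and the vanishing $H^0(\OO_S(-2H))=H^1(\OO_S(-2H))=0$, we get $H^1(I_{p/S}(-2H))=\CC$, which gives $\dim\Ext^2=1$. We also get $H^0(I_{p/S}(-2H))=0$, which gives $\Ext^3=0$.

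The main obstacle is precisely this bookkeeping of twists and the sign in $i^!$, which I would check against the Euler characteristic $\chi(T,A)=-6+1=-5$ computed by Riemann--Roch on $X$.
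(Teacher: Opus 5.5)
Your final computation is correct, but it takes a somewhat different route from the paper. First, delete the false start. $\Ext^{k-1}_S(I_{p/S}(-H),\OO_S)$ equals $\Ext^{k-1}_S(I_{p/S},\OO_S(H))$, not $\Ext^{k-1}_S(I_{p/S},\OO_S(-H))$. Your formula $i^!A=\OO_S[-1]$ was right from the outset; the apparent contradiction came only from moving the twist to the wrong side.

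With that repaired, your argument runs as follows:
\begin{itemize}
\item Grothendieck duality for the Cartier divisor $i$ gives $\Ext^k_X(T_{p,S},A)\simeq\Ext^{k-1}_S(I_{p/S},\OO_S(H))$, so $\Ext^0_X(T_{p,S},A)=0$.
\item Depth two of $\OO_{S,p}$ gives $\Hom_S(I_{p/S},\OO_S(H))=H^0(\OO_S(H))=V/\langle s\rangle$.
\item Serre duality on the Gorenstein surface $S$, with $\omega_S=\OO_S(-H)$, turns $\Ext^1_S$ and $\Ext^2_S$ into the duals of $H^1$ and $H^0$ of $I_{p/S}(-2H)$. These are $\CC$ and $0$ because $H^0(\OO_S(-2H))=H^1(\OO_S(-2H))=0$. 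You should justify that vanishing explicitly: it follows from $0\to\OO_X(-3H)\to\OO_X(-2H)\to\OO_S(-2H)\to0$ and Kodaira vanishing on $X$.
\end{itemize}

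The paper instead stays on $X$. It applies $\Ext^\bullet_X(-,\OO_X)$ to $0\to i_*I_{p/S}\to i_*\OO_S\to\OO_p\to0$. It then uses three facts: $R\mathcal Hom_X(i_*\OO_S,\OO_X)=i_*\OO_S(H)[-1]$; $\Ext^\bullet_X(\OO_p,\OO_X)$ is concentrated in degree three; and $H^1(\OO_S(H))=H^2(\OO_S(H))=0$.

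That route is shorter and avoids global Serre duality on a possibly singular surface. It also identifies $\Ext^2_X(T_{p,S},A)$ directly with the point-supported group $\Ext^3_X(\OO_p,\OO_X)$. That is the form Proposition~\ref{prop:evaluation} needs when it computes the obstruction map as evaluation at $p$.

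Your route makes the degree-one identification especially transparent, but it produces the degree-two group only as $H^1(S,I_{p/S}(-2H))^\vee$. If you want the point-supported description, apply $\Ext^\bullet_S(-,\OO_S(H))$ to the point sequence on $S$ instead. Since $H^1(\OO_S(H))=H^2(\OO_S(H))=0$, this gives $\Ext^1_S(I_{p/S},\OO_S(H))\simeq\Ext^2_S(\OO_p,\OO_S(H))$ directly, with no global duality.

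Finally, drop the assumption that $S$ is normal. The lemma concerns an arbitrary hyperplane section through $p$, and in the application it is singular there. Your argument only uses that $S$ is a Cartier divisor in a smooth threefold, hence Gorenstein.
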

\begin{proof}
Cancel the common twist and apply $R\Hom_X(-,\OO_X)$ to
$0\to i_*I_{p/S}\to i_*\OO_S\to\OO_p\to0$.
For the Cartier divisor, $R\mathcal Hom_X(i_*\OO_S,\OO_X)=i_*\OO_S(H)[-1]$; for the point the only nonzero Ext into $\OO_X$ is in degree three and is one-dimensional. The vanishings $H^1(S,\OO_S(H))=H^2(S,\OO_S(H))=0$ follow from $0\to\OO_X\to\OO_X(H)\to\OO_S(H)\to0$ and Kodaira vanishing on $X$. The resulting long exact sequence proves the claim, and identifies the degree-two term with $\Ext^3_X(\OO_p,\OO_X)$.
\end{proof}

\begin{lemma}\label{lem:simplejump}
Let $R$ be a regular local ring and let $t$ define a smooth divisor. Suppose a perfect complex $\mathcal C$ of amplitude $[1,2]$ has derived-fiber dimensions $(5,0)$ off the divisor and $(6,1)$ on it. Suppose, at the point under consideration, its first normal differential is a nonzero map from the degree-one fiber to the degree-two fiber. Then, after shrinking,
\[
\mathcal C\simeq[\,R^5\oplus R\xrightarrow{(0,\ldots,0,t)}R\,]
\quad\text{in degrees }1,2.
\]
In particular $\mathcal H^1(\mathcal C)$ is free of rank five and its special-fiber base-change image is the kernel of the first normal differential.
\end{lemma}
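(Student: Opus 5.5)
We prove Lemma~\ref{lem:simplejump} by representing $\mathcal C$ by a minimal complex at the closed point and reading off the differential from the fiber dimensions. Since $R$ is regular local and $\mathcal C$ is perfect of amplitude $[1,2]$, after shrinking we can choose a representative $[R^a\xrightarrow{d}R^b]$ in degrees $1,2$. Its matrix $d$ has entries in the maximal ideal $\mathfrak m$. This is the standard minimal-complex reduction: split off any unit entries as contractible summands. Minimality and the derived-fiber dimensions at the closed point, which lies on the divisor, give $a=6$ and $b=1$. Thus $d$ is a row vector $(f_1,\ldots,f_6)$ with $f_i\in\mathfrak m$.

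Next we use the generic fiber dimensions. At a point off $\{t=0\}$ the fiber has cohomology $(5,0)$, so $d$ has rank one there and some $f_i$ is a unit. At a point of the divisor the fiber dimensions are $(6,1)$, so all $f_i$ vanish there. Hence the ideal $(f_1,\ldots,f_6)$ has zero locus exactly $\{t=0\}$ set-theoretically, and we must upgrade this to $(f_1,\ldots,f_6)=(t)$.

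This is where the normal-differential hypothesis enters, and it is the main step. The first normal differential at the point is the map $R^6\otimes k\to R\otimes k$ given by the classes of the $f_i/t$ modulo $\mathfrak m$. To make sense of this, we first show that each $f_i$ is divisible by $t$. The hypothesis on the derived fibers along the whole divisor says that $d\otimes R/(t)$ vanishes identically, and we read it this way, shrinking if necessary. So $f_i=tg_i$. The first normal differential is then $(\bar g_1,\ldots,\bar g_6)$, and its nonvanishing gives some $g_j$ a unit.

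After a change of basis of $R^6$, namely column operations using $g_j$ to clear the other entries and a permutation, $d$ becomes $(0,\ldots,0,t)$, which is the asserted normal form. The consequences follow at once. We have $\mathcal H^1=\ker d=R^5$, free of rank five. The base-change map $\mathcal H^1\otimes k\to H^1(\mathcal C\otimes k)=k^6$ has image the span of the first five basis vectors, and this span is exactly the kernel of the normal differential $(0,\ldots,0,1)$. The only delicate point is the justification of $t\mid f_i$: it depends on reading the fiber hypothesis as holding at every point of the divisor, and on using that $(t)$ is prime in the regular ring $R$.
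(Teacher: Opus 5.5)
Your proposal is correct and matches the paper's proof. Both take a minimal two-term representative $R^6\to R$ and use the $(6,1)$ fibers along the divisor to write the row as $t(g_1,\ldots,g_6)$; the nonzero normal differential then makes some $g_j$ a unit, and a change of basis gives $(0,\ldots,0,t)$. You also spell out the divisibility by $t$ (via primality of $(t)$) and the base-change image, which the paper leaves implicit.
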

\begin{proof}
Split off invertible blocks from a two-term free representative. At the chosen point the remaining ranks are six and one. Its row vanishes along $t=0$, hence has the form $t(a_1,\ldots,a_6)$. The hypothesis says that some $a_i$ is a unit. A change of basis reduces the row to $(0,\ldots,0,t)$. The kernel and the derived special fiber are then computed directly.
\end{proof}
The nonzero-differential assumption is substantive. The rows $(0,\ldots,0,t)$ and $(0,\ldots,0,t^2)$ have the same ordinary fiber dimensions, but the latter has zero first normal differential. Lemma~\ref{lem:boundaryext} does not compute this differential for a geometric family.

\subsection{The global residual family and the boundary obstruction map}\label{subsec:relativeext}
The normalized graph theorem supplies a morphism
\[
r:\Ntilde\longrightarrow \mathcal R_{\mathrm{root}}\subset M_H(w)
\]
to the Gieseker root component. The space $M_H(w)$ admits a universal family by \cite[Section~4.1]{APR}. Fix one such family $\mathcal G$ and put
\begin{equation}\label{eq:residualfamily}
\cT=(\id_X\times r)^*\mathcal G\otimes p_X^*\OO_X(-H)
\qquad\text{on }X\times\Ntilde.
\end{equation}
Changing the normalization of $\mathcal G$ tensors $\cT$ by the pullback of a line bundle on $\Ntilde$; every projectivized construction below is therefore independent of this choice.

Let
\begin{equation}\label{eq:relativeextcomplex}
\mathcal C=R\pi_*R\mathcal Hom(\cT,p_X^*A),\qquad A=\OO_X(-H).
\end{equation}
This is a perfect complex. Away from $J$, a fiber is a root sheaf $i_*\OO_S(D-H)$. By Grothendieck duality its relevant Ext groups are
\[
\Ext_X^k(i_*\OO_S(D-H),A)\simeq H^{k-1}(S,\OO_S(H-D)).
\]

\begin{lemma}\label{lem:rootcohomology}
Let $S\in|H|$ carry a root $D$ as in the APR root locus. The rank-one reflexive sheaf $\OO_S(H-D)$ satisfies
\[
H^i(S,\OO_S(H-D))=0\quad(i>0),\qquad
h^0(S,\OO_S(H-D))=5.
\]
Consequently
\begin{equation}\label{eq:rootextglobal}
\dim\Ext_X^1(i_*\OO_S(D-H),A)=5,\qquad \Ext_X^2(i_*\OO_S(D-H),A)=0.
\end{equation}
\end{lemma}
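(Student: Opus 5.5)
Recall the APR root locus: $S\in|H|$ is a hyperplane section with (at worst) an isolated singularity, which is a del Pezzo surface of degree five; $D$ is a Weil divisor class on $S$ with $\OO_S(D)$ rank-one reflexive. Its class is fixed by $w$: the pushforward $i_*\OO_S(D)$ has $\ch=w$ up to the conventions of \eqref{eq:residualfamily}. The plan has three steps:
\begin{itemize}
\item pin down $D$ numerically;
\item prove vanishing of the higher cohomology of $\OO_S(H-D)$;
\item get $h^0$ from an Euler characteristic;
\end{itemize}
then deduce \eqref{eq:rootextglobal} from the duality formula already stated.

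\textbf{Step one (numerics).} From $w=(0,1,-\tfrac12,-\tfrac1{30})$ and Grothendieck--Riemann--Roch for $i:S\hookrightarrow X$, compute $D\cdot H_S$ and $D^2$ on $S$, using $K_S=-H_S$ and $H_S^2=5$. I expect $D$ to be a "root-type" class with $D\cdot H_S=1$ (or $D$ a line class twisted appropriately) and $D^2=-1$ or $0$. Then $\chi(\OO_S(H-D))$ follows from Riemann--Roch on $S$ (with the singular correction vanishing for a reflexive sheaf on a rational double point, or handled via the minimal resolution):
\[
\chi=1+\tfrac12(H-D)\cdot(H-D-K_S).
\]
Alternatively, and more robustly, compute $\chi_X(i_*\OO_S(H-D))$ directly from $\ch(w)$ twisted by $\OO_X(H)$ via \eqref{eq:RR}. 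This should give $5$.

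\textbf{Step two (vanishing).} This is the main obstacle, since $S$ may be singular and $\OO_S(H-D)$ is only reflexive. For $H^2$, use Serre duality on the Gorenstein surface $S$: $H^2(\OO_S(H-D))\simeq\Hom(\OO_S(H-D),\OO_S(-H))^\vee=H^0(\OO_S(D-2H))^\vee$. This vanishes because $(D-2H)\cdot H_S<0$ and $H_S$ is ample. For $H^1$, pass to the minimal resolution $\mu:\widetilde S\to S$, a weak del Pezzo surface. Write $\OO_S(H-D)=\mu_*\OO_{\widetilde S}(\widetilde D')$ for a suitable integral lift $\widetilde D'$ with $R^1\mu_*=0$ (rational singularities; choose the lift nef on exceptional curves, or with $-\widetilde D'$ relatively anti-effective appropriately). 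Then show $\widetilde D'-K_{\widetilde S}$ is nef and big and apply Kawamata--Viehweg, or show that $\widetilde D'$ is nef on the weak del Pezzo surface, where nef divisors have vanishing $H^1$. Since $H-D$ is $H$ minus a degree-one curve class, I would verify nefness by intersecting with the finitely many $(-1)$- and $(-2)$-curves. If $S$ is smooth, this is the standard fact that $H_S-\ell$ is base-point-free (a conic-bundle or birational-morphism class).

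\textbf{Step three (conclusion).} With the higher cohomology zero, $h^0=\chi=5$. Finally, the displayed duality $\Ext^k_X(i_*\OO_S(D-H),A)\simeq H^{k-1}(S,\OO_S(H-D))$ gives $\Ext^1$ of dimension $5$ and $\Ext^2=H^1=0$, which is \eqref{eq:rootextglobal}.
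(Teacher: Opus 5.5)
Your outer framework is sound. Serre duality on the Gorenstein surface kills $H^2$. The Euler characteristic can be read off on $X$: $i_*\OO_S(H-D)$ has class $w(1)=(0,1,\tfrac12,-\tfrac1{30})$, so $\chi=5$. The duality $\Ext^k_X(i_*\OO_S(D-H),A)\simeq H^{k-1}(S,\OO_S(H-D))$ then finishes.

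The gap is that the step carrying all the content, $H^1(S,\OO_S(H-D))=0$, is not proved, and the plan you sketch for it targets the wrong class. Doing your own Step one correctly gives $D\cdot H_S=0$ and $D^2=-2$, so $D$ is a root, not a line-type class. Your guess $D\cdot H_S=1$, $D^2=-1$ would even give $\chi=4$. So $H-D$ is not ``$H$ minus a degree-one curve''. It has degree $5$ and $(H-D)^2=3$; for instance it is $3h-2e_1-e_3-e_4$ when $D=e_1-e_2$ on $\Bl_4\PP^2$. Base-point-freeness of $H_S-\ell$ says nothing about it.

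On singular $S$ you also leave unspecified which lift $\widetilde D'$ you take on the minimal resolution, and you do not carry out the nefness check over the possible RDP configurations. You also silently assume rational singularities. A singular hyperplane section may be an elliptic cone, and it must be excluded; the paper does this via APR Lemma~3.2, since the cone carries no root. As written, Step two is a menu of strategies rather than an argument.

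The paper sidesteps all of this. Since $-D$ is again a root, APR Lemma~4.17 gives $H^\bullet(S,\OO_S(-D))=0$, including for singular root surfaces. A general $E\in|\OO_S(H)|$ is a smooth genus-one curve avoiding the singular points. So $0\to\OO_S(-D)\to\OO_S(H-D)\to\OO_S(H-D)|_E\to0$ is exact, and its last term is a degree-five line bundle on $E$. Hence $H^i(S,\OO_S(H-D))\simeq H^i(E,\OO_S(H-D)|_E)$. This yields the vanishing and $h^0=5$ at once, with no resolution, no singular Riemann--Roch correction, and no Cartier assumption on $D$.

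If you prefer to keep your route, the natural repair is to prove that $(H-D)-K_S=2H-D$ is ample and then apply vanishing. On smooth $S$, Hodge index on $K_S^\perp$ gives $(D\cdot\ell)^2\le 2\cdot\tfrac65$, so $|D\cdot\ell|\le1$ and $(2H-D)\cdot\ell\ge1$ for each of the lines generating the cone of curves; Kodaira vanishing then applies. On RDP surfaces you would still need Kawamata--Viehweg vanishing for $\mathbb{Q}$-Cartier Weil divisors on klt surfaces, together with the analogous positivity check on images of $(-1)$-curves.
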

\begin{proof}
By \cite[Lemma~3.2 and its proof]{APR}, a singular $S$ has rational double points or is an elliptic cone. The latter has no roots, so it is excluded by the existence of $D$. Thus $S$ is smooth or has rational double points. The divisor $-D$ is again a root, and \cite[Lemma~4.17]{APR} gives
\[
H^i(S,\OO_S(-D))=0\qquad\text{for all }i.
\]
Choose a general member $E\in|\OO_S(H)|$. Very ampleness and Bertini give a smooth connected curve disjoint from the finite singular locus of $S$. Adjunction, with $K_S=-H|_S$, gives $\omega_E\simeq\OO_E$, so $E$ has genus one. The sheaf $\OO_S(-D)$ is locally free along $E$, and multiplication by its defining section gives
\[
0\longrightarrow\OO_S(-D)\longrightarrow\OO_S(H-D)
\longrightarrow\OO_S(H-D)|_E\longrightarrow0.
\]
The last term is a line bundle on $E$ of degree $(H-D)\cdot H=5$. It has $h^0=5$ and $h^1=0$. The long exact cohomology sequence proves the claimed vanishing and dimension. This argument requires no Cartier assumption on $D$ at the singular points.

For the Ext statement, a rank-one reflexive sheaf on the normal Gorenstein surface $S$ is maximal Cohen--Macaulay. Its higher sheaf Ext groups into $\OO_S$ vanish, and its dual is $\OO_S(-D)$. Grothendieck duality for the Cartier embedding $i:S\hookrightarrow X$ therefore gives
\[
R\mathcal Hom_X(i_*\OO_S(D-H),A)
\simeq i_*\OO_S(H-D)[-1].
\]
Taking cohomology proves \eqref{eq:rootextglobal}.
\end{proof}

Thus \eqref{eq:rootextglobal} holds throughout $\Ntilde\setminus J$. On $J$ the fibers are $T_{p,S}=i_*I_{p/S}(-H)$ and Lemma~\ref{lem:boundaryext} gives dimensions $(6,1)$. Thus $\mathcal C$ has relative amplitude $[1,2]$ near $J$.

\begin{proposition}\label{prop:evaluation}
Let $b=(p,[s])\in J$, with $S=(s=0)$ singular at $p$. Under the natural identifications
\[
\Ext_X^1(T_{p,S},A)\simeq V/\langle s\rangle,
\qquad
\Ext_X^2(T_{p,S},A)\simeq \OO_X(H)|_p\otimes L_b
\]
for a one-dimensional normalization factor $L_b$, the first differential of \eqref{eq:relativeextcomplex} in a nonzero normal direction to $J$ is, up to the choice of a generator of $L_b$,
\begin{equation}\label{eq:evaluation}
V/\langle s\rangle\longrightarrow \OO_X(H)|_p,
\qquad [g]\longmapsto g(p).
\end{equation}
In particular its kernel is $W_p/\langle s\rangle=H^0(I_p(H))/\langle s\rangle$.
\end{proposition}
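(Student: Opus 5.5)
We compute the first normal differential of $\mathcal C$ at $b$ through a first-order deformation of $T_{p,S}$ transverse to $J$, and identify the resulting map with evaluation at $p$.

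\textbf{Step 1: reduce to a first-order family.} By Remark~\ref{rem:contactorder}, a nonzero normal vector at $b$ maps under $d\pi$ onto the tautological line $\ell_b\subset N_{X/N,p}=K_p\otimes\OO_X(H)|_p$. Choose an arc $\gamma:\Spf\CC[[t]]\to\Ntilde$ meeting $J$ transversally at $b$. By Lemma~\ref{lem:simplejump} and base change, the first normal differential is the connecting map of the restriction of $\mathcal C$ to $D=\CC[t]/(t^2)$. That restriction is computed by $R\Hom$ from the first-order family $\cT_D$ into $A$. The extension
\[
0\to T_{p,S}\xrightarrow{t}\cT_D\to T_{p,S}\to0
\]
has a Kodaira--Spencer class $\kappa\in\Ext^1(T_{p,S},T_{p,S})$. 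The differential is the Yoneda pairing
\[
\Ext^1(T_{p,S},A)\to\Ext^2(T_{p,S},A),\qquad \xi\mapsto\xi\circ\kappa .
\]

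\textbf{Step 2: decompose through the point sequence.} Use $0\to I_{p/S}\to\OO_S\to\OO_p\to0$ twisted by $-H$. As in the proof of Lemma~\ref{lem:boundaryext}, the class $\Ext^1(T_{p,S},A)\simeq V/\langle s\rangle$ comes from $\Ext^1(i_*\OO_S(-H),A)=H^0(\OO_S(H))$. The class $\Ext^2(T_{p,S},A)$ is the image of $\Ext^3(\OO_p,A)\simeq\OO_X(H)|_p\otimes\omega_X^{-1}|_p$ under the connecting map. The line $L_b$ collects $\det T_pX$ together with the normalization of $\kappa$.

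The transverse deformation off $J$ is a root sheaf $i_*\OO_{S_t}(D_t-H)$. In this deformation the point $p$ gets absorbed into the reflexive sheaf. Hence the component of $\kappa$ that matters is the composite
\[
T_{p,S}\to\OO_p(-H)\to T_{p,S}[1],
\]
whose middle class comes from a generator of $\Ext^1(\OO_p,I_{p/S})$. That generator is dual to the Serre-dual point class. Modulo terms that vanish after composing with $\xi$, the pairing $\xi\circ\kappa$ therefore factors as follows. First restrict $g\in H^0(\OO_S(H))$ to $p$. Then pair with the fixed class $\Ext^3(\OO_p,A)$. This restriction is evaluation $g\mapsto g(p)$, which is well defined on $V/\langle s\rangle$ because $s(p)=0$.

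\textbf{Step 3: check that the differential is nonzero.} The map \eqref{eq:evaluation} is nonzero only if the coefficient of $\kappa$ along the point-absorbing direction is nonzero. I would identify this coefficient with the component of $d\pi(\partial_t)$ in $\ell_b$, via the APR description \eqref{eq:APRwall}. The normal direction to $J$ corresponds, through $E_p$ and the exact sequence of Lemma~\ref{lem:firstordersupport}, to the extension class linking $I_p$ and $\OO_X(-1)[1]$, and Remark~\ref{rem:contactorder} shows this component is nonzero. The kernel statement is then immediate: $\{g: g(p)=0\}/\langle s\rangle=W_p/\langle s\rangle$.

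\textbf{Main obstacle.} The hard part is Step 2–3's identification of $\kappa$. One must show that the transverse deformation really smooths the point component, so that the $\Ext^1(\OO_p,-)$ part of $\kappa$ is nonzero rather than pure deformation of $(S,D)$. I would try to obtain this from the mutation triangles, comparing $E_p$ with the root family through \eqref{eq:APRwall} at first order.
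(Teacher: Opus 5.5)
\textbf{Step 2 fails as written.} You take the relevant part of $\kappa$ to be a composite $T_{p,S}\to\OO_p(-H)\xrightarrow{\delta}T_{p,S}[1]$, with $\delta$ generating $\Ext^1(\OO_p,I_{p/S})$. For any such composite, $\xi[1]\circ\kappa$ factors through $\xi[1]\circ\delta\in\Ext^2_X(\OO_p(-H),A)\simeq\Ext^2_X(\OO_p,\OO_X)$. That group is zero on a smooth threefold, so this mechanism gives the zero map for every $g$, not evaluation. Two smaller points: $\Hom(I_{p/S},\OO_p)$ is three-dimensional at the singular point, so the first arrow is not canonical; and the connecting isomorphism runs from $\Ext^2(T_{p,S},A)$ to $\Ext^3(\OO_p(-H),A)$, not the reverse.

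The paper factors $\xi$ rather than $\kappa$. After twisting, put $F=I_{p/S}$ and $L=\OO_X(-H)$, and let $a\colon F\to L[1]$ be the class of $0\to L\xrightarrow{s}I_p\to F\to0$. Then every extension class is $e_g=g[1]\circ a$, so $e_g[1]\circ\kappa=g[2]\circ\eta$ with $\eta:=a[1]\circ\kappa\in\Ext^2_X(F,L)$. The point sequence identifies this one-dimensional group with $\Ext^3_X(\OO_p,L)$, naturally in the second argument, and $g\colon L\to\OO_X$ acts on $\Ext^3(\OO_p,-)$ by $g(p)$. Hence the differential is evaluation times the single scalar $\eta$. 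No description of the shape of $\kappa$, and no ``absorption'' of $p$, is needed. In your language, the correct point-supported quantity is $\kappa$ \emph{precomposed} with $\delta$, namely $a[2]\circ\kappa[1]\circ\delta\in\Ext^3(\OO_p,L)$. It is not a factorization of $\kappa$ through $\OO_p$.

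\textbf{Step 3 is the real content, and it is missing.} Matching the coefficient with the $\ell_b$-component of $d\pi(\partial_t)$ ``via mutation triangles'' is a plan, not an argument. The missing idea is that $\eta=a[1]\circ\kappa$ is exactly the obstruction to lifting the extension $a$, with $L$ fixed, over the first-order deformation $F_\varepsilon$. Suppose $\eta=0$. Then the lifted middle term is a flat deformation of $I_p$, hence the ideal of an infinitesimal point because $\Hilb^1(X)=X$. So $F_\varepsilon$ stays in the point-on-hyperplane component, which the APR contraction sends into $j(X)$, and therefore $d\pi(n)\in T_pX$. Since $s_{\mathrm{root}}\circ r=\pi$ (Theorem~\ref{thm:graphblowup}), this contradicts \eqref{eq:blowupnormaldifferential}.

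Finally, do not invoke Lemma~\ref{lem:simplejump} in Step 1. Its hypothesis is precisely the nonzero normal differential that this proposition supplies in Theorem~\ref{thm:extbundle}. The identification of the first normal differential with the Yoneda connecting map over $\CC[t]/(t^2)$ follows from base change alone.
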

\begin{proof}
Twist by $H$ and write $F=I_{p/S}$ and $L=\OO_X(-H)$. The exact sequence
\[
0\longrightarrow L\xrightarrow{s}I_p\longrightarrow F\longrightarrow0
\]
defines $a:F\to L[1]$. For $g\in V=\Hom(L,\OO_X)$ the class
\[
e_g=g[1]\circ a\in\Ext_X^1(F,\OO_X)
\]
depends only on $[g]\in V/\langle s\rangle$, and these classes give the natural identification with that quotient.

Let $n$ be a nonzero normal direction to $J$ and let $\kappa_n\in\Ext^1(F,F)$ be the Kodaira--Spencer class of the family \eqref{eq:residualfamily}. The standard obstruction to deforming the extension $e_g$ is the Yoneda product
\[
\delta_b(e_g)=e_g[1]\circ\kappa_n
=g[2]\circ\eta_n,
\qquad
\eta_n:=a[1]\circ\kappa_n\in\Ext_X^2(F,L).
\]
The target $\Ext_X^2(F,L)$ is one-dimensional. Indeed, Grothendieck duality and $0\to I_{p/S}\to\OO_S\to\OO_p\to0$ identify it with $\Ext_S^2(\OO_p,\OO_S)$.

We claim $\eta_n\ne0$. The obstruction to lifting the extension class $a\in\Ext^1(F,L)$ over the first-order deformation $F_\varepsilon$ is precisely $a[1]\circ\kappa_n$. The line bundle $L$ is kept fixed. If this class vanished, a lift would give a flat extension: its middle term is a deformation of $I_p$, since both endpoints are flat and its closed fiber is the original extension. Thus the defining sequence would lift to
\[
0\longrightarrow L_\varepsilon\longrightarrow I_\varepsilon\longrightarrow F_\varepsilon\longrightarrow0.
\]
Since $\Hilb^1(X)=X$, the deformation $I_\varepsilon$ is the ideal of an infinitesimal point $p_\varepsilon$, so $F_\varepsilon$ remains in the point-on-hyperplane component of $M_H(w)$. The APR contraction sends that component infinitesimally into $T_pX$. On the other hand, let $r:\Ntilde\to\mathcal R_{\mathrm{root}}$ be the morphism from the normalized graph. Theorem~\ref{thm:graphblowup} identifies $s_{\mathrm{root}}\circ r$ with the blow-up morphism $\pi:\Ntilde\to N^{\mathrm{Ku}}_5$. By \eqref{eq:blowupnormaldifferential}, the nonzero normal class $n$ has nonzero image under $d\pi$ in
\[
N_{X/N^{\mathrm{Ku}}_5,p}\simeq K_p\otimes\OO_X(H)|_p,
\]
spanning the tautological line labelled by $[s]$. This contradicts the preceding infinitesimal factorization through $X$. Hence $\eta_n$ generates the one-dimensional group.

Finally, under local duality the morphism $g:L\to\OO_X$ acts on the point-supported generator by multiplication by the fiber value $g(p)$. Therefore $g[2](\eta_n)=g(p)\eta'_n$ for a nonzero generator $\eta'_n$ of the target obstruction line, which proves \eqref{eq:evaluation}.
\end{proof}

\begin{theorem}\label{thm:extbundle}
Set
\begin{equation}\label{eq:E5}
\cE_5:=\mathcal H^1(\mathcal C).
\end{equation}
Then $\cE_5$ is locally free of rank five on $\Ntilde$. Replacing $\mathcal G$ by $\mathcal G\otimes\pi^*M$ replaces $\cE_5$ by $\cE_5\otimes M^\vee$, so its projectivization is canonical. On $J=\PP_X(K)$, with $q:J\to X$ and tautological line $\mathcal L_J\subset q^*K$, there is a line bundle $\mathcal B_J$ specified below such that
\begin{equation}\label{eq:boundarybundle}
\cE_5|_J\simeq(q^*W/\mathcal L_J)\otimes\mathcal B_J.
\end{equation}
In particular $\PP_J(\cE_5|_J)\simeq\PP_J(q^*W/\mathcal L_J)$.
\end{theorem}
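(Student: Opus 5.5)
The argument splits into a local freeness statement for $\mathcal H^1(\mathcal C)$, a twisting statement, and an identification of the boundary restriction; the first two are formal consequences of what has been set up, and the third is where the geometry enters.

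\emph{Local freeness.} I would work separately off $J$ and near $J$. On $\Ntilde\setminus J$, Lemma~\ref{lem:rootcohomology} gives derived-fiber Ext dimensions $(5,0)$ concentrated in degree one, since $\Ext^0=\Ext^3=0$ for root sheaves by the duality formula there. Hence $\mathcal C$ is locally isomorphic to a rank-five bundle in degree one, and $\mathcal H^1(\mathcal C)$ is locally free of rank five, commuting with base change.

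Near a point $b\in J$, the Cartier divisor $J$ in the smooth tenfold-base $\Ntilde$ is cut out locally by $t$. Lemma~\ref{lem:boundaryext} gives fiber dimensions $(6,1)$ on $J$, and $\mathcal C$ has amplitude $[1,2]$ there. Proposition~\ref{prop:evaluation} says the first normal differential is the evaluation $V/\langle s\rangle\to\OO_X(H)|_p$, which is nonzero because $V$ generates $\OO_X(H)$. So the hypotheses of Lemma~\ref{lem:simplejump} hold. That lemma gives $\mathcal H^1(\mathcal C)$ free of rank five near $b$, with special-fiber image equal to the kernel of the evaluation, namely $W_p/\langle s\rangle$.

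\emph{Twisting.} Replacing $\mathcal G$ by $\mathcal G\otimes\pi^*M$ replaces $\cT$ by $\cT\otimes\pi^*M$. By the projection formula, $\mathcal C$ becomes $\mathcal C\otimes M^\vee$, and so does $\mathcal H^1$. The projectivization is therefore canonical.

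\emph{Boundary identification.} This is the step I expect to be the main obstacle. Fiberwise, the base-change map $\cE_5|_J\to\mathcal H^1(L\iota^*\mathcal C)$ is injective with image the kernel of the normal differential. I would globalize as follows.
\begin{enumerate}
\item On $J$, construct the relative version of the classes $e_g$ from the proof of Proposition~\ref{prop:evaluation}. The universal sequence $0\to q^*\mathcal L_J\otimes L\to q^*I_\Delta\to\cT|_J(H)\to0$ is tensored by line bundles on $J$: the tautological inclusion $\mathcal L_J\subset q^*K\subset q^*V$ gives the section $s$, and the residual family restricts to $I_{p/S}$ up to a twist $\mathcal N_J$ coming from the normalization of $\mathcal G$.
\item The connecting map then gives an isomorphism of bundles $(q^*V/\mathcal L_J)\otimes\mathcal B'_J\simeq\mathcal H^1(L\iota^*\mathcal C)$. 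It is an isomorphism because it is a fiberwise isomorphism between bundles, using Lemma~\ref{lem:boundaryext} and constancy of rank six.
\item The normal differential is a bundle map $(q^*V/\mathcal L_J)\otimes\mathcal B'_J\to q^*\OO_X(H)\otimes\mathcal B''_J\otimes N_{J/\Ntilde}$. Here $\mathcal B''_J$ is the obstruction line, computed as $\Ext^2$ of the point contribution, which is a line bundle by constancy of dimension one. By Proposition~\ref{prop:evaluation} this map is fiberwise evaluation times a nonzero scalar.
\item A nowhere-vanishing scalar between line bundles is an isomorphism of line bundles. The kernel is therefore exactly $(q^*W/\mathcal L_J)\otimes\mathcal B'_J$.
\end{enumerate}
Setting $\mathcal B_J=\mathcal B'_J$ then gives \eqref{eq:boundarybundle}.

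The delicate point is that the evaluation identification in Proposition~\ref{prop:evaluation} is only stated pointwise, up to a scalar depending on a generator of $L_b$. I would handle this by noting that the composite kernel subsheaf is determined independently of the scalars: a fiberwise-nonzero multiple of evaluation has the same kernel subbundle. The kernel is then $q^*W/\mathcal L_J$ twisted by the line bundle $\mathcal B'_J$, with no further control on the scalars needed. Finally, the projectivizations agree because twisting by a line bundle does not change $\PP_J$.
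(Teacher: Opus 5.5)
Your proposal is correct and follows the paper's proof essentially step for step. It uses the simple-jump normal form of Lemma~\ref{lem:simplejump} for local freeness, the universal boundary sequence on $X\times J$ to identify $\mathcal H^1(Li^*\mathcal C)\simeq(q^*V/\mathcal L_J)\otimes\mathcal B_J$ (the paper takes $\mathcal B_J=\mathcal L_J^\vee\otimes M_J^\vee$), and then the kernel of the intrinsic first-normal morphism, with reducedness of $J$ converting fiberwise statements into bundle statements; the only slips are harmless ones: $\Ntilde$ is a sixfold rather than a tenfold, and dividing the differential by a local equation of $J$ gives a map into $\mathcal H^2(Li^*\mathcal C)\otimes N_{J/\Ntilde}^\vee$ rather than $\otimes N_{J/\Ntilde}$, which does not affect the kernel.
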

\begin{proof}
Off $J$, \eqref{eq:rootextglobal} and cohomology and base change give rank five. On $J$, Lemma~\ref{lem:boundaryext} gives derived-fiber dimensions $(6,1)$, and Proposition~\ref{prop:evaluation} gives a nonzero normal differential. Lemma~\ref{lem:simplejump} puts $\mathcal C$ locally in the form $[R^5\oplus R\xrightarrow{(0,\ldots,0,t)}R]$ in degrees $1,2$. Its degree-one cohomology is therefore locally free of rank five.

We globalize the boundary identification. Write $\pi_J:X\times J\to J$ and let $\Gamma_q\subset X\times J$ be the graph of $q$. The tautological equation gives the flat boundary family
\begin{equation}\label{eq:universalboundarysequence}
0\to p_X^*\OO_X(-H)\otimes\pi_J^*\mathcal L_J
\to I_{\Gamma_q}\to\mathcal G_J^0\to0.
\end{equation}
The restriction of the chosen universal family is $\mathcal G|_{X\times J}\simeq\mathcal G_J^0\otimes\pi_J^*M_J$ for a line bundle $M_J$ on $J$. Indeed, both families classify the same stable sheaves; their relative Isom torsor is the scalar $\mathbb G_m$-torsor of such a line bundle. Put
$\mathcal B_J=\mathcal L_J^\vee\otimes M_J^\vee$.

For $i:J\hookrightarrow\Ntilde$, let $B_k=\mathcal H^k(Li^*\mathcal C)$. Relative Hom applied to \eqref{eq:universalboundarysequence}, with the common $-H$ twist in $\mathcal C$ cancelled, gives
\[
B_1\simeq(q^*V/\mathcal L_J)\otimes\mathcal B_J,
\qquad B_2\simeq q^*\det T_X\otimes M_J^\vee.
\]
Here $q^*V$ denotes $V\otimes\OO_J$. The first formula is the cokernel of $\OO_J\to q^*V\otimes\mathcal L_J^\vee$, tensored by $M_J^\vee$; the second follows from point duality $\cExt^3_{\pi_J}(\OO_{\Gamma_q},\OO)=q^*\det T_X$.

Dividing a local differential of $\mathcal C$ by a local equation of $J$ defines the intrinsic first-normal morphism
\[
\partial_J:B_1\longrightarrow B_2\otimes N_{J/\Ntilde}^\vee.
\]
Changing the equation changes this division by the transition function of $N_{J/\Ntilde}^\vee$, so the morphisms glue. By \eqref{eq:exceptionalnormal} and $\det T_X=\OO_X(2H)$ its target is $q^*\OO_X(H)\otimes\mathcal B_J$. Proposition~\ref{prop:evaluation} says that on each closed fiber $\partial_J$ is a nonzero scalar multiple of evaluation. Its restriction to $(q^*W/\mathcal L_J)\otimes\mathcal B_J$ therefore vanishes: this is a morphism of bundles on the reduced scheme $J$. The induced map on the quotient line bundle is everywhere nonzero, hence an isomorphism. Equivalently, after this target-line identification, the global sequence is
\begin{equation}\label{eq:relativeboundarykernel}
0\to(q^*W/\mathcal L_J)\otimes\mathcal B_J
\to B_1\xrightarrow{\partial_J}q^*\OO_X(H)\otimes\mathcal B_J\to0.
\end{equation}
The simple-jump form identifies the natural base-change map $\cE_5|_J\to B_1$ with the inclusion $\ker\partial_J$. These maps are intrinsic and glue, proving \eqref{eq:boundarybundle} as a bundle identity, rather than only fiberwise. Under $\mathcal G\mapsto\mathcal G\otimes\pi^*M$, one has $M_J\mapsto M_J\otimes M|_J$, so $\mathcal B_J\mapsto\mathcal B_J\otimes M^\vee|_J$, as required.
\end{proof}

\begin{corollary}\label{cor:principalextension}
The projective bundle
\begin{equation}\label{eq:Rtilde}
\Rtilde:=\PP_{\Ntilde}(\cE_5)
\end{equation}
is canonical, smooth, irreducible, and of dimension ten. Over the open root locus it is the $\PP^4$-family of nonzero extensions by $\OO_X(-H)$, while on the exceptional divisor
\[
\Rtilde|_J\simeq\PP_J(q^*W/\mathcal L_J)
\simeq\widetilde\Sigma
=\{(p,\ell,L):\ell\subset K_p,\ \ell\subset L\subset W_p\}.
\]
\end{corollary}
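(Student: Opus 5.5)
The plan is to deduce each assertion from Theorem~\ref{thm:extbundle} and Proposition~\ref{prop:Sigma}, using the general properties of projective bundles.

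\textbf{Canonicity, smoothness, irreducibility, dimension.} Theorem~\ref{thm:extbundle} shows that $\cE_5$ is locally free of rank five on $\Ntilde$. A change of normalization $\mathcal G\mapsto\mathcal G\otimes\pi^*M$ replaces $\cE_5$ by $\cE_5\otimes M^\vee$, and twisting by a line bundle leaves $\PP_{\Ntilde}(\cE_5)$ unchanged. So $\Rtilde$ does not depend on the choice of universal family. By Theorem~\ref{thm:graphblowup}, $\Ntilde=\Bl_{j(X)}N$ is the blow-up of a smooth irreducible sixfold along a smooth center, hence smooth and irreducible of dimension six. A $\PP^4$-bundle over it is therefore smooth and irreducible of dimension $6+4=10$.

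\textbf{The open root locus.} Off $J$ the fiber of $\cE_5$ at a root sheaf is $\Ext^1_X(i_*\OO_S(D-H),A)$. By Lemma~\ref{lem:rootcohomology} and cohomology and base change, this group is five-dimensional. Its projectivization parametrizes nonzero extension classes of the residual fiber by $A=\OO_X(-H)$ up to scalar, which is the stated $\PP^4$-family.

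\textbf{The boundary.} Restrict to $J$ and apply \eqref{eq:boundarybundle}. Projectivization ignores the twist by $\mathcal B_J$, so $\Rtilde|_J\simeq\PP_J(q^*W/\mathcal L_J)$. Under the line convention, a point of this bundle over $(p,\ell)\in\PP_X(K)$ is a line in $W_p/\ell$. This is the same as a two-plane $L$ with $\ell\subset L\subset W_p$, and $\ell\subset K_p$ holds automatically. The assignment gives a morphism over $J$ from $\PP_J(q^*W/\mathcal L_J)$ to $\widetilde\Sigma$, defined by taking $L$ to be the preimage in $q^*W$ of the tautological line, and it is bijective on points. Both spaces are $\PP^4$-bundles over $\PP_X(K)$. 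Indeed, Proposition~\ref{prop:Sigma} realizes $\widetilde\Sigma$ as $\PP_{\PP_X(K)}(q^*W_p/\ell)$ by the same description, so the map is a fiberwise linear identification and hence an isomorphism of projective bundles.

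\textbf{Main point of care.} The only step needing attention is that the boundary identification must hold as schemes, not only on points. This is exactly what the bundle-level statement \eqref{eq:boundarybundle} provides, as opposed to a fiberwise dimension count.
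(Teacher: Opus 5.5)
Your proposal is correct and follows essentially the same route as the paper. Smoothness, irreducibility and dimension come from $\Ntilde$ being the blow-up of a smooth sixfold along a smooth center together with $\cE_5$ having rank five. Canonicity comes from the twist behaviour in Theorem~\ref{thm:extbundle}. The boundary identification comes from projectivizing \eqref{eq:boundarybundle} and matching lines in $W_p/\ell$ with two-planes $\ell\subset L\subset W_p$. You add somewhat more detail on the open-locus base change than the paper does.
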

\begin{proof}
Smoothness and the dimension statement follow because $\Ntilde$ is the blow-up of a smooth sixfold along a smooth center and $\cE_5$ has rank five. The open-locus interpretation is the usual extension construction. Projectivizing \eqref{eq:boundarybundle} removes $\mathcal B_J$: a line in $(q^*W/\mathcal L_J)_{(p,[s])}=W_p/\langle s\rangle$ determines the two-plane $L=\langle s,g\rangle$ with $\langle s\rangle\subset L\subset W_p$, and conversely.
\end{proof}

\section{The principal component and the local structure of the moduli space}\label{sec:ambient}
Using the APR chamber comparison, the normalized inverse-wall graph, and the extension bundle of Theorems~\ref{thm:exactprincipal}, \ref{thm:graphblowup}, and \ref{thm:extbundle}, we construct the principal moduli morphism and compute its formal contraction to the normalization of its image. We then determine the complete equivariant local deformation ring at every corrected polystable point. This calculation proves that the principal component is normal and identifies its scheme-theoretic intersection with the corrected component, establishing Theorem~\ref{thm:mainambient} and completing Theorem~\ref{thm:mainconstruction}(ii).

Fix from now on a rational outer parameter
\[
\sigma=\sigma_{\sqrt{b^2-2},-b,s},\qquad b>\frac32,\quad s>0,
\]
with $b,s\in\mathbb Q$. Since only $\alpha^2=b^2-2$ enters the central charge, the existence input in Section~\ref{sec:hearts} applies and gives the proper good moduli space
\[
M^{ss}=M_\sigma(-v_5).
\]

\subsection{The principal morphism and its fibers}
\begin{lemma}\label{lem:offboundarystable}
Let $x\in\Rtilde\setminus\widetilde\Sigma$ lie over a root sheaf
\[
T_x=i_*\OO_S(D-H)
\]
on an integral normal hyperplane section $S$. Every nonzero extension represented by $x$ has middle term $I_C$ for a Cohen--Macaulay curve $C\subset X$ with Hilbert polynomial $5m+1$. Moreover $I_C[1]$ is $\sigma$-stable.
\end{lemma}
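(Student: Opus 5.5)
A point $x\in\Rtilde\setminus\widetilde\Sigma$ is a nonzero class $\xi\in\Ext^1_X(T_x,A)$, with $A=\OO_X(-H)$. By Lemma~\ref{lem:rootcohomology} this space is five-dimensional. The plan is to identify the middle term $G$ of
\[
0\to A\to G\to T_x\to0
\]
as a curve ideal, and then obtain stability from Lemma~\ref{lem:stablecriterion}.

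\textbf{Numerics.} We have $\ch(G)=\ch(A)+\ch(T_x)=\ch(\OO_X(-1))+\tau_5=v_5$. So $G$ has rank one, $c_1=0$, and once $G$ is shown to be an ideal sheaf, its curve has Hilbert polynomial $5m+1$.

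\textbf{Torsion-freeness.} Suppose $G$ had a torsion subsheaf $G_t\neq0$. Since $A$ is torsion-free, $G_t\cap A=0$, so $G_t$ maps injectively into $T_x$, supported on $S$. The sheaf $\OO_S(D-H)$ is rank-one reflexive on the integral surface $S$, so every nonzero subsheaf has full support $S$. The quotient $T_x/G_t$ then has strictly smaller support. Pulling back the extension along $G_t\hookrightarrow T_x$ gives a splitting there, so $\xi$ restricts to zero in $\Ext^1(G_t,A)$. I would rule this out by duality. From Lemma~\ref{lem:rootcohomology},
\[
\Ext^1(i_*\OO_S(D-H),A)\simeq H^0(S,\OO_S(H-D)).
\]
Restricting to a subsheaf $G_t=i_*\mathcal F$ of the same rank corresponds to the map $H^0(\OO_S(H-D))\to H^0(\mathcal F^\vee(H))$, and this map is injective because $\mathcal F^\vee\supseteq\OO_S(-D)$ and $S$ is integral. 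Hence $\xi\neq0$ forces $G_t=0$.

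\textbf{Ideal-sheaf structure.} Now $G$ is torsion-free of rank one with $c_1=0$, so $G\simeq I_C\otimes\mathcal N$ with $\mathcal N$ trivial since $\operatorname{Pic}X=\ZZ H$. Thus $G=I_C$ with $\ch(\OO_C)=(0,0,1,-4/5)$, a one-dimensional subscheme of degree five.

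\textbf{Cohen--Macaulay property.} This is equivalent to $\Ext^1(\OO_p,I_C)=0$ for all $p$, that is, to $G$ having no embedded or isolated zero-dimensional part of $\OO_C$. I expect this to be the main obstacle. The plan is to apply $\Hom(\OO_p,-)$ to the extension, using:
\begin{itemize}
\item $\Ext^1(\OO_p,A)=\Ext^2(\OO_p,A)=0$, since $A$ is a line bundle on a threefold;
\item $\Hom(\OO_p,T_x)=0$, because $T_x$ is pure of dimension two.
\end{itemize}
These give $\Ext^1(\OO_p,G)\simeq\Ext^1(\OO_p,T_x)$ restricted to the kernel of the connecting map into $\Ext^2(\OO_p,A)=0$. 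So it suffices that $\Ext^1_X(\OO_p,i_*\OO_S(D-H))=0$. Via $\Ext^1_X(\OO_p,i_*\mathcal F)\simeq\Hom_S(\OO_p,\mathcal F)\oplus\Ext^1_S(\OO_p,\mathcal F)$ up to a filtration, this follows from $\mathcal F=\OO_S(D-H)$ having depth two, which holds because it is maximal Cohen--Macaulay on the normal surface $S$ (as in the proof of Lemma~\ref{lem:rootcohomology}).

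\textbf{Stability.} First, $I_C$ is tilt-stable at the outer parameter. It is slope-stable of rank one, hence tilt-stable for large $\alpha$, and Proposition~\ref{prop:nolarger} together with Corollary~\ref{cor:transport} excludes walls on the vertical ray, exactly as in Proposition~\ref{prop:CI}. Its slope there is zero. Second, the Cohen--Macaulay step gives $\Hom(\OO_p,I_C[1])=\Ext^1(\OO_p,I_C)=0$ for every $p$. Lemma~\ref{lem:stablecriterion} then shows that $I_C[1]$ is $\sigma$-stable.
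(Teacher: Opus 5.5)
Your proof is correct. The stability step (no walls on the vertical ray by Proposition~\ref{prop:nolarger}, then Lemma~\ref{lem:stablecriterion}) is exactly the paper's. The difference lies in how you identify the middle term.

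The paper uses the duality $\Ext^1_X(T_x,A)\simeq H^0(S,\OO_S(H-D))$ to represent $\xi$ by a nonzero, hence injective, map $u:\OO_S(D-H)\to\OO_S$. It then pulls back the ideal $I_{C/S}=\operatorname{im}u$ along $\OO_X\to i_*\OO_S$, which exhibits the middle term directly as $I_C$ with $C\subset S$. The Cohen--Macaulay property follows from the depth lemma on $0\to I_{C/S}\to\OO_S\to\OO_C\to0$.

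You proceed more abstractly. You prove torsion-freeness by the splitting-and-restriction argument and recover $G\simeq I_C$ from the reflexive hull and $\operatorname{Pic}X=\ZZ H$. You then compute $\Ext^1(\OO_p,I_C)\simeq\Ext^1(\OO_p,T_x)=0$ straight from the extension.

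The paper's route gives extra geometry for free: $C$ is the divisor of $u$ in $|H-D|$ on $S$, and the density argument in Theorem~\ref{thm:principalcomponent} uses this. Your route produces precisely the vanishing that Lemma~\ref{lem:stablecriterion} consumes, without passing through ``Cohen--Macaulay implies depth two.''

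There are a few cosmetic points:
\begin{itemize}
\item For $\mathcal F\subset\OO_S(D-H)$, the restriction target is $\Ext^1_X(i_*\mathcal F,A)\simeq H^0(S,\mathcal F^\vee)$ with $\mathcal F^\vee\supseteq\OO_S(H-D)$. Your formula corresponds to the $H$-twisted subsheaf, which is harmless.
\item The needed injectivity follows faster from $\Ext^1_X(T_x/G_t,A)=0$, since $T_x/G_t$ has support of dimension at most one.
\item The ``up to a filtration'' step is cleanest stated as $\operatorname{depth}_{\OO_{X,p}}(i_*\mathcal F)_p=\operatorname{depth}_{\OO_{S,p}}\mathcal F_p=2$, so $\Ext^j_X(\OO_p,i_*\mathcal F)=0$ for $j<2$.
\item Your bullet $\Hom(\OO_p,T_x)=0$ is not needed there, because $\Ext^1(\OO_p,A)=0$ already.
\end{itemize}
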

\begin{proof}
By Grothendieck duality, the extension space is naturally
\[
\Ext_X^1(i_*\OO_S(D-H),\OO_X(-H))
 \simeq H^0(S,\OO_S(H-D)).
\]
Thus a nonzero extension class is represented by a nonzero map
\[
u:\OO_S(D-H)\longrightarrow\OO_S.
\]
The surface $S$ is integral and $\OO_S(D-H)$ is rank-one reflexive, hence $u$ is injective. Write its image as an ideal $I_{C/S}\subset\OO_S$. Pulling this ideal back through
\[
0\longrightarrow\OO_X(-H)\longrightarrow\OO_X\longrightarrow i_*\OO_S\longrightarrow0
\]
identifies the extension middle term with $I_C$ and gives
\[
0\longrightarrow\OO_X(-H)\longrightarrow I_C
 \longrightarrow i_*I_{C/S}\longrightarrow0.
\]
The rank-one reflexive sheaf $I_{C/S}\simeq\OO_S(D-H)$ has depth two on the normal Cohen--Macaulay surface $S$. The depth lemma applied to
\[
0\longrightarrow I_{C/S}\longrightarrow\OO_S\longrightarrow\OO_C\longrightarrow0
\]
shows that $C$ is Cohen--Macaulay of pure dimension one. Its numerical class is fixed by the extension, so $\ch(I_C)=v_5$, equivalently $P_C(m)=5m+1$.

The rank-one torsion-free sheaf $I_C$ is slope-stable. Starting at large $\alpha$ with $\beta=-b$ and moving vertically to $\alpha=\sqrt{b^2-2}$, any actual tilt wall for the class $v_5$ would have to strictly enclose the standard terminal wall $W_5$. Proposition~\ref{prop:nolarger} excludes such a wall. Hence $I_C$ is tilt-stable of slope zero at the chosen outer parameter.

Since $C$ is Cohen--Macaulay, $I_C$ has depth at least two at every closed point of the smooth threefold $X$. Therefore
\[
\Ext_X^1(\OO_p,I_C)=0
\qquad\text{for all }p\in X.
\]
Lemma~\ref{lem:stablecriterion} now gives the $\sigma$-stability of $I_C[1]$.
\end{proof}

\begin{lemma}\label{lem:boundaryelementary}
Let $(p,\ell,L)\in\widetilde\Sigma$, with $\ell=\langle s\rangle\subset K_p$ and $L=\langle s,g\rangle\subset W_p$. Let $F_{p,\ell,L}$ be the middle term of the corresponding boundary extension
\[
0\longrightarrow\OO_X(-H)\longrightarrow F_{p,\ell,L}
 \longrightarrow i_*I_{p/S_s}(-H)\longrightarrow0.
\]
Then there is a canonical elementary-modification sequence
\begin{equation}\label{eq:boundaryelementary}
0\longrightarrow F_{p,\ell,L}
 \longrightarrow I_{E_L}\longrightarrow\OO_p\longrightarrow0.
\end{equation}
Consequently, in the second-tilt heart,
\begin{equation}\label{eq:boundaryheart}
0\longrightarrow\OO_p\longrightarrow F_{p,\ell,L}[1]
 \longrightarrow I_{E_L}[1]\longrightarrow0.
\end{equation}
In particular $F_{p,\ell,L}[1]$ is $\sigma$-semistable and
\[
\operatorname{gr}\bigl(F_{p,\ell,L}[1]\bigr)
 \simeq \OO_p\oplus I_{E_L}[1].
\]
Its S-equivalence class depends only on $(p,L)$ and not on $\ell$.
\end{lemma}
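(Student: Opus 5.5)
The plan is to build the elementary modification directly from the Koszul picture of $I_{E_L}$ and then read off the heart sequence and its Jordan--H\"older factors.

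\emph{Constructing the modification.} Untwist the extension: after tensoring by $\OO_X(H)$ it is $0\to\OO_X\to F(H)\to i_*I_{p/S_s}\to0$. By the identification in Proposition~\ref{prop:evaluation}, the class $[g]\in V/\langle s\rangle$ corresponds to $e_g=g[1]\circ a$, where $a$ is the class of $0\to\OO_X(-H)\xrightarrow{s}I_p\to I_{p/S}\to0$. This means $F_{p,\ell,L}$ is the pushout of that sequence along $g:\OO_X(-H)\to\OO_X$, twisted by $-H$. Concretely, $F_{p,\ell,L}(H)$ is the cokernel of
\[
\OO_X(-H)\xrightarrow{(s,-g)}I_p\oplus\OO_X .
\]
Compare this with the Koszul presentation of $I_{E_L}(H)$ from \eqref{eq:koszul}, namely the cokernel of $\OO_X(-H)\xrightarrow{(s,-g)}\OO_X\oplus\OO_X$. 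The inclusion $I_p\oplus\OO_X\hookrightarrow\OO_X\oplus\OO_X$ commutes with the two presentations, so it induces a map $F(H)\to I_{E_L}(H)$. The snake lemma shows this map is injective with cokernel $\OO_X/I_p=\OO_p$. Twisting back by $-H$ gives \eqref{eq:boundaryelementary}, and $\OO_p(-H)\simeq\OO_p$.

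\emph{Canonicity.} One has to check that the map is independent of the representative $g$ of $[g]$ and of the scalars chosen for $s$ and $g$. Changing $g$ by $g+\lambda s$ is absorbed by the automorphism $(x,y)\mapsto(x+\lambda y,\,y)$ of both middle terms; this is compatible with the inclusion because $s\in I_p$. Rescaling acts diagonally. Hence the sequence depends only on $(p,\ell,L)$, up to the usual scalar.

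\emph{Passing to the heart.} Rotate \eqref{eq:boundaryelementary} to the triangle $\OO_p\to F[1]\to I_{E_L}[1]$. By Proposition~\ref{prop:CI}, $I_{E_L}$ is tilt-stable of slope zero at every outer parameter, so $I_{E_L}[1]\in\cA_{\alpha,\beta}$. The skyscraper $\OO_p$ lies in $\cT_{\alpha,\beta}$, since it is in the null category. Thus both outer terms are in the heart, the triangle is a short exact sequence there, and $F[1]$ lies in the heart because hearts are extension-closed. This gives \eqref{eq:boundaryheart}.

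Both $\OO_p$ and $I_{E_L}[1]$ have phase one and are $\sigma$-stable: the first is a skyscraper of phase one, the second is stable by Proposition~\ref{prop:CI}. Therefore $F[1]$ is an extension of stable objects of phase one, hence semistable. The sequence \eqref{eq:boundaryheart} is itself its Jordan--H\"older filtration, so $\operatorname{gr}\simeq\OO_p\oplus I_{E_L}[1]$. This depends only on $p$ and $L=\langle s,g\rangle$, not on $\ell$.

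\emph{Main obstacle.} The step I expect to need the most care is the first one: identifying $F_{p,\ell,L}$ with the pushout along $g$. Under Grothendieck duality, the extension parametrized by $\Rtilde|_J$ must be exactly this pushout. This uses the explicit description of $\Ext^1(T_{p,S},A)\simeq V/\langle s\rangle$ from Lemma~\ref{lem:boundaryext} and Proposition~\ref{prop:evaluation}. The point to verify is that $e_g$ is the Yoneda composite described there, which is precisely the pushout class.
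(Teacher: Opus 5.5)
Your proposal is correct and follows the paper's proof essentially step for step: the pushout presentation of $F_{p,\ell,L}(H)$ via Proposition~\ref{prop:evaluation}, comparison with the Koszul presentation and the snake lemma, then the rotated triangle in $\cA_{\alpha,\beta}$ with two stable phase-one factors; the only variation is that you obtain $F_{p,\ell,L}[1]\in\cA_{\alpha,\beta}$ from extension-closure of the heart, whereas the paper notes directly that $F_{p,\ell,L}$ lies in $\cF_{\alpha,\beta}$, and both arguments work. One small slip in your extra canonicity check: the automorphism absorbing $g\mapsto g+\lambda s$ is $(x,y)\mapsto(x,\,y-\lambda x)$, whereas your $(x,y)\mapsto(x+\lambda y,\,y)$ does not preserve $I_p\oplus\OO_X$ and sends $(s,-g)$ to $(s-\lambda g,-g)$, so it moves $s$ rather than $g$.
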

\begin{proof}
Twist by $H$. The residual boundary sheaf fits into
\[
0\longrightarrow\OO_X(-H)\xrightarrow{s} I_p
 \longrightarrow i_*I_{p/S_s}\longrightarrow0.
\]
Proposition~\ref{prop:evaluation} identifies the allowed boundary extension class with the class of $g$ modulo $\langle s\rangle$, where $g(p)=0$. The pushout presentation of the twisted middle term is therefore
\begin{equation}\label{eq:pushoutpresentation}
0\longrightarrow\OO_X(-H)
 \xrightarrow{(s,-g)} I_p\oplus\OO_X
 \longrightarrow F_{p,\ell,L}(H)\longrightarrow0.
\end{equation}
On the other hand, the two sections $s,g$ cut out the codimension-two linear section $E_L$, and the Koszul presentation is
\[
0\longrightarrow\OO_X(-H)
 \xrightarrow{(s,-g)}\OO_X^{\oplus2}
 \longrightarrow I_{E_L}(H)\longrightarrow0.
\]
The inclusion $I_p\oplus\OO_X\hookrightarrow\OO_X^{\oplus2}$ and the snake lemma yield
\[
0\longrightarrow F_{p,\ell,L}(H)
 \longrightarrow I_{E_L}(H)\longrightarrow\OO_p\longrightarrow0,
\]
which is \eqref{eq:boundaryelementary} after twisting back.

Both $F_{p,\ell,L}$ and $I_{E_L}$ have first three Chern-character entries $(1,0,-1)$ and lie in the nonpositive part of the first-tilt torsion pair at the outer point. Thus the shifted triangle associated with \eqref{eq:boundaryelementary} is the short exact sequence \eqref{eq:boundaryheart} in $\cA_{\alpha,\beta}$. Proposition~\ref{prop:CI} says that $I_{E_L}[1]$ is stable, and $\OO_p$ is a stable phase-one skyscraper object. Hence their extension is semistable of phase one and has the displayed graded object. The graded object records only $p$ and $L$, proving the last assertion.
\end{proof}

\begin{theorem}\label{thm:principalcomponent}
The tautological relative extensions on $\Rtilde=\PP_{\Ntilde}(\cE_5)$ define a morphism
\[
\Phi:\Rtilde\longrightarrow M^{ss}.
\]
Its image is the reduced closure $R$ of the smooth rational-quintic locus, and
\[
\Phi:\Rtilde\longrightarrow R
\]
is proper and birational. Moreover:
\begin{enumerate}
\item $\Phi$ is an isomorphism from $\Rtilde\setminus\widetilde\Sigma$ onto its image;
\item on the boundary one has
\[
\Phi|_{\widetilde\Sigma}=\iota_\sigma\circ\eta,
\qquad
\eta:\widetilde\Sigma\to\Sigma,
\quad (p,\ell,L)\mapsto(p,L),
\]
where $\iota_\sigma(p,L)$ is the S-equivalence class of $I_{E_L}[1]\oplus\OO_p$;
\item distinct pairs $(p,L)$ define distinct S-equivalence classes, and the only positive-dimensional geometric fibers of $\Phi$ are the fibers
\[
\eta^{-1}(p,L)\simeq\PP^1
\qquad ((p,L)\in Z=\Gr_X(2,K)).
\]
\end{enumerate}
\end{theorem}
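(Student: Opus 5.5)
The plan is to build $\Phi$ from a universal extension family on $X\times\Rtilde$, then read off its image and fibers from the earlier pointwise results. Let $\varpi:\Rtilde=\PP_{\Ntilde}(\cE_5)\to\Ntilde$ be the projection, with tautological line $\OO(-1)\subset\varpi^*\cE_5$. Theorem~\ref{thm:extbundle} says $\cE_5=\mathcal H^1(\mathcal C)$ commutes with base change off $J$. On $J$, the simple-jump form of Lemma~\ref{lem:simplejump} identifies its fibers with $\ker\partial_J\subset\Ext^1$. Hence the tautological section of $\varpi^*\cE_5\otimes\OO(1)$ yields a global class in $\Ext^1_{X\times\Rtilde}(\varpi^*\cT,p_X^*A\otimes\OO(1))$. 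Its restriction to each point is the extension labelled by that point. Let $\mathcal F$ be the middle term of the resulting universal extension. It is flat over $\Rtilde$, since both ends are flat.

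The fibers of $\mathcal F$ are the objects already studied. Off $\widetilde\Sigma$ the fiber is $I_C$ with $I_C[1]$ $\sigma$-stable, by Lemma~\ref{lem:offboundarystable}. Over $\widetilde\Sigma$ the fiber is $F_{p,\ell,L}$, and $F_{p,\ell,L}[1]$ is $\sigma$-semistable with graded object $\OO_p\oplus I_{E_L}[1]$, by Lemma~\ref{lem:boundaryelementary}. So $\mathcal F[1]$ is a family of semistable objects of class $-v_5$. It defines a map to the stack, and composing with the good moduli map gives $\Phi:\Rtilde\to M^{ss}$.

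Assertion (ii) is immediate from the computed graded objects: $\Phi|_{\widetilde\Sigma}$ factors as $\iota_\sigma\circ\eta$. Properness of $\Phi$ holds because $\Rtilde$ is proper; it is smooth projective over the blow-up of a projective sixfold, and $M^{ss}$ is separated. The image is therefore closed and irreducible, of dimension ten, and contains the smooth rational-quintic locus. That locus is dense in $\Rtilde$, since over general root sheaves the general extension is a smooth rational quintic. Hence the image is $R$.

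For (i), I would show that $\Phi$ is injective on geometric points of $\Rtilde\setminus\widetilde\Sigma$ and unramified there. \emph{Injectivity:} a stable $I_C[1]$ recovers $C$. The hyperplane $S$ containing $C$ is unique, because $H^0(I_C(H))$ is one-dimensional by the extension sequence and $H^0(\OO_S(D-H))=0$. Then $T=I_C/\OO_X(-H)$ recovers the point of $\Ntilde$, via the inverse of $r$ from Theorem~\ref{thm:graphblowup} off $J$, and the extension class is determined up to scalar. \emph{Differential:} I would compare the tangent space of $\Rtilde$, namely deformations of the triple (root sheaf, line of extensions), with $\Ext^1(I_C,I_C)$. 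This uses the long exact sequences of $\Hom(-,-)$ applied to $0\to A\to I_C\to T\to0$, together with $\Hom(A,T)=H^0(\OO_S(D))=0$ and $\Ext^1(T,T)$ smooth of dimension six. Injectivity of the differential then follows; I would check it via $\Ext^2(T,A)=0$. Since the stable locus is open in $M^{ss}$, a proper injective unramified birational map onto a normal-on-stable locus is an isomorphism there. Alternatively, if normality is not yet known, I would show that the image is an open immersion into the smooth stable locus of dimension ten. That requires $\dim\Ext^1(I_C,I_C)=10$ and vanishing obstructions, and is the delicate part.

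For (iii), distinct pairs $(p,L)$ give distinct S-equivalence classes by Corollary~\ref{cor:actual}. Points off $\widetilde\Sigma$ are stable and so never collide with boundary points. The fibers of $\Phi$ over $\Sigma$ are therefore the fibers of $\eta$, which by Proposition~\ref{prop:Sigma} are $\PP^1$ exactly over $Z$ and single points elsewhere.

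I expect the main obstacle to be the family-level construction of the universal extension across $J$. The issue is that $\mathcal H^1(\mathcal C)$ does not commute with base change there. One must justify that the tautological section lifts to an honest relative $\Ext^1$ class, using the local form $[R^5\oplus R\to R]$ to split $\mathcal C$ so that $\mathcal H^1$ maps into $\tau_{\le1}\mathcal C$. A second obstacle is the isomorphism in (i), which needs the tangent-space comparison.
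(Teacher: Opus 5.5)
Your construction of $\Phi$ from the tautological class, the factorization $\Phi|_{\widetilde\Sigma}=\iota_\sigma\circ\eta$ via graded objects, and the fiber analysis in (iii) agree with the paper. The universal-extension issue you raise is settled as you suggest. Fiberwise $\Hom(T,A)=0$ puts $\mathcal C$ in amplitude $[1,2]$, so $\tau_{\le1}\mathcal C\simeq\cE_5[-1]$ maps naturally to $\mathcal C$. The simple-jump form makes the fiber of $\cE_5$ at a point of $J$ inject into $\Ext^1(T_{p,S},A)$, so the class is nonzero on every fiber. Two steps, however, are not actually proved.

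\emph{Assertion (i).} Your main route ends with ``proper injective unramified birational onto a normal-on-stable locus''. Normality of $R$ is not available at this stage: the paper deduces it \emph{from} (i) in Theorem~\ref{thm:ambientgluing}, so this is circular. Your alternative is left as ``the delicate part''.

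The missing idea is to run your pointwise reconstruction over Artin local bases $\Lambda$. The sequence $0\to\OO_X\to I_C(H)\to i_*\OO_S(D)\to0$ together with $H^\bullet(S,\OO_S(D))=0$ gives $H^0(I_C(H))=\CC$ and $H^{>0}(I_C(H))=0$. Note that $H^0(\OO_S(D))=0$ is what is needed, not $H^0(\OO_S(D-H))=0$. By cohomology and base change, any deformation $I_\Lambda$ has $H^0(I_\Lambda(H))$ free of rank one. This yields a flat deformation of $\OO_X(-H)\hookrightarrow I_C$. Its cokernel is a flat deformation of the root sheaf lying in the smooth open root locus, and the extension line then gives the point of $\Rtilde$. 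This inverts $\Phi$ on deformation functors, so $M^{ss}$ is smooth along the image and $\Phi$ is a local isomorphism there. This stronger form is what Theorem~\ref{thm:ambientgluing} later uses for reducedness.

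Your own route can also be completed without normality. Proper, injective and unramified already give a monomorphism, hence a closed immersion over the open stable part. The vanishings $\Ext^2(A,I_C)=H^2(I_C(H))=0$, $\Ext^2(T,A)=0$, and $\Ext^2(T,T)=0$ (from $\chi(T,T)=-5$ and $\dim\Ext^1(T,T)=6$) give $\Ext^2(I_C,I_C)=0$. Hence $\dim\Ext^1(I_C,I_C)=10$, the injective differential is an isomorphism, and the closed immersion of the smooth tenfold is a local isomorphism.

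\emph{The image is $R$.} You assert that the general extension over a general root sheaf is a smooth rational quintic, but this is the substantive point. Lemma~\ref{lem:offboundarystable} only yields a Cohen--Macaulay curve $C\in|H-D|$ on $S$ with Hilbert polynomial $5m+1$, which could a priori be singular or reducible. The paper proves the claim on a smooth $S\simeq\Bl_4\PP^2$. It lists the roots $e_i-e_j$ and $\pm(h-e_i-e_j-e_k)$ and checks that $|H-D|$ is basepoint-free with $(H-D)^2=3>0$. It then applies Bertini, and adjunction with $H\cdot C=5$ gives genus zero. Likewise, your claim that the image contains the entire smooth rational-quintic locus needs an argument, for instance the hyperplane-and-residual argument of Lemma~\ref{lem:pointsexhaust}.
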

\begin{proof}
The tautological line
\[
\OO_{\Rtilde}(-1)\hookrightarrow q^*\cE_5
\]
and the definition $\cE_5=\mathcal H^1(R\pi_*R\mathcal Hom(\cT,p_X^*A))$ give, by the standard relative-Ext construction, a universal extension on $X\times\Rtilde$, unique up to tensoring by a line bundle pulled back from $\Rtilde$. The simple-jump normal form of Theorem~\ref{thm:extbundle} ensures that on $J$ its fiberwise extension classes are exactly the base-change kernel $W_p/\langle s\rangle$. Lemmas~\ref{lem:offboundarystable} and \ref{lem:boundaryelementary} show that every geometric fiber, after shifting by one, is $\sigma$-semistable of class $-v_5$. The universal extension is a flat family (equivalently, its cone has relative Tor-amplitude zero, since all fibers are sheaves with the fixed Hilbert polynomial). It therefore induces a morphism to the semistable stack and hence, by the universal property of the good moduli space, a morphism
\[
\Phi:\Rtilde\to M^{ss}.
\]

We first identify the morphism away from the boundary. For an off-boundary point write the resulting object as $I_C[1]$ and the residual root as $i_*\OO_S(D-H)$. Twisting the defining extension by $H$ gives
\begin{equation}\label{eq:recoverhyperplane}
0\longrightarrow\OO_X\longrightarrow I_C(H)
 \longrightarrow i_*\OO_S(D)\longrightarrow0.
\end{equation}

For a root $D$, the vanishing $H^\bullet(S,\OO_S(D))=0$ is \cite[Lemma~4.17]{APR}; this includes the singular root surfaces and does not require treating a Weil divisor as Cartier. Thus \eqref{eq:recoverhyperplane} yields
\[
H^0(X,I_C(H))\simeq\CC,
\qquad H^i(X,I_C(H))=0\quad(i>0).
\]

Therefore the unique section of $I_C(H)$ recovers the hyperplane $S$. Its quotient in \eqref{eq:recoverhyperplane} recovers the residual root sheaf, hence the point of $\Ntilde\setminus J$, and the extension line recovers the point of the projective bundle. Cohomology and base change make this reconstruction functorial in families; it is unchanged by the scalar line twists inherent in a universal stable family. This also applies to infinitesimal deformations over an Artin local base. Such a deformation, shifted back, is a flat deformation of the ideal sheaf; its determinant stays trivial since $H^1(\OO_X)=H^2(\OO_X)=0$. Cohomology and base change recover the rank-one module of hyperplane sections, and the quotient is a flat Gieseker-stable deformation of the residual root sheaf. Its closed point lies off the point component, so it is represented by the open root locus of the APR moduli space. The extension line then gives the inverse on these deformation functors as well. Thus the stable ambient moduli is smooth along this image, and the reconstruction proves (i).

On the boundary, Lemma~\ref{lem:boundaryelementary} gives
\[
\operatorname{gr}(F_{p,\ell,L}[1])
 \simeq I_{E_L}[1]\oplus\OO_p.
\]
Thus the good-moduli map forgets precisely the line $\ell$ and factors through $\eta$. Corollary~\ref{cor:actual} shows that the stable factors recover both $p$ and $L$, so distinct pairs $(p,L)$ are not S-equivalent. Proposition~\ref{prop:Sigma} computes the incidence fibers: they are reduced points away from $Z$ and $\PP^1$ over $Z$. This proves (ii) and (iii).

Finally, $\Rtilde$ is projective and irreducible. Hence its image is closed and
irreducible. We verify that the smooth rational-quintic locus is dense. On the
dense open locus where $S$ is smooth, write $S=\Bl_{p_1,\ldots,p_4}\PP^2$,
where the four points are in general position. Let $h$ be the pullback of a
line and $e_i$ the exceptional classes, so $H|_S=-K_S=3h-\sum_i e_i$.
The roots are precisely
\[
e_i-e_j\quad(i\ne j),\qquad
\pm(h-e_i-e_j-e_k)\quad(i,j,k\text{ distinct}).
\]
Indeed, writing $D=ah-\sum_i b_i e_i$, the equations $D\cdot H=0$ and
$D^2=-2$ give $\sum_i b_i=3a$ and $\sum_i b_i^2=a^2+2$.
Cauchy's inequality implies $5a^2\le8$, so $a=0,\pm1$; the displayed list
then follows immediately. Up to permuting the points, the corresponding
classes $H-D$ can therefore be written as
\[
\begin{gathered}
(h-e_1)+(2h-e_1-e_3-e_4),\qquad 2h-e_4,\\
(2h-e_1-e_2-e_3)+(2h-e_1-e_2-e_3-e_4).
\end{gathered}
\]
The line pencil $|h-e_1|$ is basepoint-free on $S$, as are the systems of
conics through the indicated subsets of the four points: their only plane
base points are the prescribed points, and blowing up resolves them.
Tensor products of globally generated line bundles are globally generated,
so $|H-D|$ is basepoint-free in every case. Put $C\in|H-D|$. Since
$C^2=3>0$, this linear system is not composed with a pencil. Bertini's
smoothness and irreducibility theorems therefore give a smooth integral
general member. Its degree is $H\cdot C=5$, and adjunction gives
\[
2g(C)-2=C\cdot(C+K_S)=3-5=-2.
\]
Thus the general member is a smooth rational quintic. Openness of smoothness
in this family now gives a nonempty dense open subset of $\Rtilde$ consisting
of smooth rational quintics. It follows that the image of $\Phi$ is exactly
its reduced closure $R$. By (i), $\Phi$ is an isomorphism on a dense open set,
hence birational; projectivity of the source makes it proper.
\end{proof}

\subsection{The formal contraction of the principal branch}
We compute the normal bundle and formal neighborhoods of the exceptional flag locus, determining $R^\nu$ before the ambient deformation calculation.

Let
\[
a:Z=\Gr_X(2,K)\longrightarrow X,
\qquad
q_F:F=\PP_Z(\cS)\longrightarrow Z,
\]
where $\cS\subset a^*K$ is the tautological rank-two bundle. Write
\[
\lambda:=\OO_F(-1)\subset q_F^*\cS,
\qquad
\cQ_F:=q_F^*\cS/\lambda,
\]
and set $B:=\widetilde\Sigma=\Rtilde|_J$.

\begin{lemma}\label{lem:formalnormal}
There is a canonical identification
\[
N_{F/B}\simeq \cQ_F^\vee\otimes(aq_F)^*(W/K)
 \simeq \OO_F(-1)\otimes q_F^*G_0,
\]
where
\[
G_0:=(\det\cS)^\vee\otimes a^*(W/K),
\qquad \rk G_0=3.
\]
Moreover, if $L_H:=a^*\OO_X(H)$, then
\[
N_{B/\Rtilde}|_F\simeq \OO_F(-1)\otimes q_F^*L_H.
\]
Consequently, Zariski locally on $Z$ the normal sequence splits and
\begin{equation}\label{eq:globalformalnormal}
N_{F/\Rtilde}\simeq \OO_F(-1)\otimes q_F^*G,
\qquad G:=G_0\oplus L_H,
\end{equation}
with $G_0\subset G$ the displayed direct summand.
\end{lemma}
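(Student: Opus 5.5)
We compute the two normal bundles separately and then assemble them. For $N_{F/B}$, we use the description of $B=\widetilde\Sigma$ from Proposition~\ref{prop:Sigma} as a $\PP^4$-bundle over $J=\PP_X(K)$. The fiber over $(p,\ell)$ is $\PP(W_p/\ell)$. The flag locus $F=\Flag_X(1,2;K)$ sits inside $B$ as the subbundle $\PP(K_p/\ell)\subset\PP(W_p/\ell)$. Both $B$ and $F$ live over the same base $J$, so $N_{F/B}$ is the normal bundle of a linear subbundle in a projective bundle. At a point $(p,\ell,L)$ write $\bar L=L/\ell$. Then $N=\Hom(\bar L,(W_p/\ell)/(K_p/\ell))=\bar L^\vee\otimes (W/K)_p$. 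Since $\bar L=L/\ell$ is the fiber of $\cQ_F$, this gives the first identification $\cQ_F^\vee\otimes(aq_F)^*(W/K)$. The second follows from $\cQ_F\simeq q_F^*\det\cS\otimes\lambda^\vee$, that is $\cQ_F^\vee\simeq\lambda\otimes q_F^*(\det\cS)^\vee=\OO_F(-1)\otimes q_F^*(\det\cS)^\vee$. To make this canonical I will check the relative Euler-sequence identification globally over $J$, which is routine.

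For $N_{B/\Rtilde}|_F$, note that $B=\Rtilde|_J$ is the preimage of the divisor $J$ under the projective bundle map $\Rtilde\to\Ntilde$. Hence $N_{B/\Rtilde}$ is the pullback of $N_{J/\Ntilde}$. By \eqref{eq:exceptionalnormal} this is $\OO_{\PP_X(K)}(-1)\otimes q^*\OO_X(H)$. We then restrict to $F$. The tautological line of $\PP_X(K)$ at $(p,\ell)$ is $\ell$, which is exactly $\lambda=\OO_F(-1)$ pulled back along $F\to J$, $(p,\ell,L)\mapsto(p,\ell)$. The factor $q^*\OO_X(H)$ pulls back to $q_F^*L_H$. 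This gives $N_{B/\Rtilde}|_F\simeq\OO_F(-1)\otimes q_F^*L_H$. The one point needing care is compatibility of the line conventions: $\PP$ parametrizes lines, so the tautological subbundle is $\OO(-1)$. I will match these conventions with Corollary~\ref{cor:abstractblowup}.

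Finally, the normal sequence
\[
0\to N_{F/B}\to N_{F/\Rtilde}\to N_{B/\Rtilde}|_F\to0
\]
becomes, after tensoring with $\OO_F(1)$,
\[
0\to q_F^*G_0\to N_{F/\Rtilde}(1)\to q_F^*L_H\to0.
\]
Its extension class lies in
\[
H^1\bigl(F,q_F^*(G_0\otimes L_H^\vee)\bigr)=H^1\bigl(Z,G_0\otimes L_H^\vee\otimes R q_{F*}\OO_F\bigr)=H^1\bigl(Z,G_0\otimes L_H^\vee\bigr),
\]
since $q_F$ is a $\PP^1$-bundle and $Rq_{F*}\OO_F=\OO_Z$. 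This group vanishes on affine opens of $Z$, so the sequence splits Zariski locally on $Z$. That yields \eqref{eq:globalformalnormal}, with $G_0$ a canonical subbundle.

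The main obstacle is making the first identification canonical rather than only fiberwise, i.e.\ writing $\Rtilde|_J\simeq\PP_J(q^*W/\mathcal L_J)$ from Corollary~\ref{cor:principalextension} with its twist by $\mathcal B_J$ removed. The twist does not affect $N_{F/B}$, because normal bundles of linear subbundles are unchanged when the ambient vector bundle is twisted by a line bundle. I would first verify this invariance, and then compute everything in the untwisted model.
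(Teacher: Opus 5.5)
Your proposal is correct and follows the paper's proof essentially step by step. You compute $N_{F/B}=\Hom(L/\ell,W_p/K_p)$ fiberwise for the linear subbundle $\PP_J(q^*K/\mathcal L_J)\subset\PP_J(q^*W/\mathcal L_J)$, pull back $N_{J/\Ntilde}$ along the projective-bundle map and restrict it to $F$, and kill the extension class over affine opens of $Z$ using $Rq_{F*}\OO_F=\OO_Z$, exactly as the paper does; your remark that the twist by $\mathcal B_J$ does not change the normal bundle of a linear subbundle makes explicit a point the paper leaves implicit.
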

\begin{proof}
A point of $F$ is a triple $(p,\ell,L)$ with $\ell\subset L\subset K_p$. Keeping $(p,\ell)$ fixed, the incidence space $B$ moves the quotient line $L/\ell$ inside $W_p/\ell$, whereas $F$ is the locus where that quotient line stays inside $K_p/\ell$. Hence
\[
N_{F/B}|_{(p,\ell,L)}
 \simeq \Hom(L/\ell,W_p/K_p),
\]
which globalizes to the first displayed formula. Since
\[
\det(q_F^*\cS)=\lambda\otimes\cQ_F,
\]
one has
\[
\cQ_F^\vee\simeq
\lambda\otimes q_F^*(\det\cS)^\vee,
\]
giving the formula for $G_0$.

Because $B=\Rtilde\times_{\Ntilde}J$, its normal line in $\Rtilde$ is the pullback of $N_{J/\Ntilde}$. Equation~\eqref{eq:exceptionalnormal} gives
\[
N_{J/\Ntilde}\simeq \OO_J(-1)\otimes q_J^*\OO_X(H).
\]
On $F\to J$ the tautological line $\OO_J(-1)$ pulls back to $\lambda$, so the second displayed formula follows. Thus
\[
0\to \OO_F(-1)\otimes q_F^*G_0
 \to N_{F/\Rtilde}
 \to \OO_F(-1)\otimes q_F^*L_H\to0.
\]
After shrinking to an affine open $U\subset Z$, its extension class lies in
\[
H^1\!\left(F_U,q_F^*(G_0\otimes L_H^{-1})\right)=0,
\]
because $R^1q_{F*}\OO_F=0$. Hence the sequence splits Zariski locally on $Z$, proving \eqref{eq:globalformalnormal}.
\end{proof}

\begin{theorem}\label{thm:formallinearization}
Zariski locally on $Z$, the formal pair $(\Rtilde,B)$ along $F$ is isomorphic to the formal pair
\begin{equation}\label{eq:formalpair}
\left(
\Tot_F(\OO_F(-1)\otimes q_F^*G),
\Tot_F(\OO_F(-1)\otimes q_F^*G_0)
\right)
\end{equation}
along the common zero section, with $G_0\subset G$ as in Lemma~\ref{lem:formalnormal}.
\end{theorem}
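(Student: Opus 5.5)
We aim for a formal linearization of a smooth pair along a smooth center. The obstructions to such a linearization live in cohomology of twists of the conormal bundle. We compute these relative to $q_F:F\to Z$, where the relevant normal bundle has the form $\OO_F(-1)\otimes q_F^*G$ with $\OO_F(1)$ relatively ample on the $\PP^1$-bundle $F\to Z$. The strategy is the standard Grauert-type argument in a relative form. Shrink to an affine open $U\subset Z$ on which $\cS$, $a^*K$, $a^*(W/K)$ and $L_H$ are trivial, so $F_U\simeq U\times\PP^1$ and $G_0,G$ are trivial of ranks three and four. Lemma~\ref{lem:formalnormal} identifies the normal bundles of $F$ in $B$ and in $\Rtilde$, together with the split sequence $N_{F/B}\subset N_{F/\Rtilde}$. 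These are the first-order data of the model pair \eqref{eq:formalpair}.

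First we linearize $B$ along $F$. On $U$, $B$ is smooth and $F\subset B$ is smooth of codimension three with normal bundle $\OO(-1)^{3}$ on $U\times\PP^1$. Equivalently, the incidence description in Proposition~\ref{prop:Sigma} shows that $B\to\Sigma$ is locally the standard small resolution of the $3\times2$ determinantal cone: explicitly, $\widetilde\Sigma$ is locally the total space of $\OO_{\PP^1}(-1)^{\oplus3}$ over $U\times\PP^1$ times the free coordinates. This gives an actual (not merely formal) isomorphism of $B$ near $F$ with $\Tot_F(\OO_F(-1)\otimes q_F^*G_0)$. We obtain it by writing the moving two-plane $L$ in Grassmannian coordinates and reading off the line $\ell$ via the chart of Proposition~\ref{prop:Sigma}.

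Next we extend the isomorphism to $\Rtilde$ order by order. Let $\mathcal I_F$ be the ideal of $F$ in $\Rtilde$. Inductively we construct compatible isomorphisms of the $n$-th infinitesimal neighbourhoods of $F$ in $\Rtilde$ and in the model, each carrying the neighbourhood of $F$ in $B$ to the model's corresponding neighbourhood, which is already fixed by the previous step. The obstruction to lifting from order $n$ to order $n+1$, and the ambiguity in the choice, lie in groups of the form
\[
H^1\bigl(F_U,\,T\otimes\Sym^{n}N^\vee_{F/\Rtilde}\bigr)\qquad(n\ge1),
\]
where $T$ is $T_{\Rtilde}|_F$ or $N_{F/\Rtilde}$, with variants respecting the submanifold $B$. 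Since $N^\vee=\OO_F(1)\otimes q_F^*G^\vee$, these sheaves are $\OO_{\PP^1}(n)$ tensored with, respectively, $T_F$, with $\OO(-1)$, or with trivial bundles. Along $\PP^1$ the tangent bundle $T_F$ has summands $\OO(2)$ and $\OO$. All these twists are therefore $\ge n-1\ge0$ on each $\PP^1$ fiber, so $R^1q_{F*}$ vanishes. Because $U$ is affine, $H^1$ vanishes as well. The same vanishing, applied to the relative tangent sheaf $T_{\Rtilde}(-\log B)|_F$, lets us choose lifts compatible with $B$. Passing to the inverse limit gives the formal isomorphism of pairs \eqref{eq:formalpair}.

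The main obstacle is this compatibility with $B$ in the inductive step. We need the obstruction space for deforming the pair to vanish, not just the space for the ambient. We handle it by an ordering. First, fix $B$'s linearization from the second step. Then extend in the transverse direction $L_H$ using the exact sequence $0\to T_B\to T_{\Rtilde}|_B\to N_{B/\Rtilde}\to0$ restricted to formal neighbourhoods of $F$. Here $N_{B/\Rtilde}|_F=\OO_F(-1)\otimes L_H$ by Lemma~\ref{lem:formalnormal}, and its twists by $\Sym^nN^\vee$ again have degree $\ge n-1\ge0$ on the fibers. If a direct degree count fails for some term, the fallback is the following. Use that $\Rtilde\to\Ntilde$ is a projective bundle and $B$ is the preimage of the divisor $J$. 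The transverse coordinate can then be taken to be a pullback of a local equation of $J$, which reduces the remaining extension to the already linear structure of $B$.
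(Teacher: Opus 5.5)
Your argument is correct, but it is organized differently from the paper's.

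The paper runs a single Grauert-type induction on the pair. Over an affine $U\subset Z$ with $N=N_0\oplus N_1$ split, it straightens coordinates order by order using two kinds of sheaves:
\begin{itemize}
\item the horizontal sheaves $T_{F_U}\otimes\Sym^mN^\vee$, of fibre degrees $m$ and $m+2$;
\item the pair-preserving vertical kernels $\mathcal V_m=\ker(N\otimes\Sym^mN^\vee\to N_1\otimes\Sym^mN_0^\vee)$, of fibre degree $m-1$.
\end{itemize}
In this way $B$ and $\Rtilde$ are linearized simultaneously.

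You split the problem in two. First you linearize $B$ exactly from the Grassmannian chart of Proposition~\ref{prop:Sigma}. Fix a reference plane $L_0$, write a nearby plane as the graph of $(\phi_K,\phi_Q)$, and let $[v]\in\PP(L_0)$ record $\ell$. Over the chart, $B$ is then literally $U\times\{([v],\phi_Q):\phi_Q\in\Hom(L_0/\langle v\rangle,Q)\}\simeq\Tot_{F_U}(N_0)$. This is a Zariski-local algebraic statement, stronger than the formal one required.

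Second, you extend across the transverse direction with the identification on $B$ held fixed. Two local extensions then differ by a derivation with values in the part of $\Sym^{n+1}N^\vee$ divisible by the transverse coordinate. Such a derivation is a section of $T_{\Rtilde}|_F\otimes N_1^\vee\otimes\Sym^nN^\vee$. Its graded pieces have degree at least $n$ on the fibres of $F\to Z$. So $H^1(\PP^1,\OO(k))=0$ for $k\ge-1$, together with affineness of $U$, finishes the argument exactly as in the paper.

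Your route gives an explicit, non-formal model for $B$, and needs only the divisible-by-the-equation obstruction sheaves in the second step. The paper's route needs no chart and treats the pair with one uniform bookkeeping.

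Two small corrections:
\begin{itemize}
\item The ambiguity at each order is governed by $H^0$ of these sheaves, not $H^1$; only the obstruction lives in $H^1$.
\item Your fallback is unnecessary, since the degree count never fails, and it would not work as stated. The bundle $N_{B/\Rtilde}|_F\simeq\OO_F(-1)\otimes q_F^*L_H$ has degree $-1$ on the fibres of $F\to Z$. Hence $J$ has no local equation on any neighbourhood of the image of such a fibre. The transverse coordinate is a section of a line bundle, not a function.
\end{itemize}
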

\begin{proof}
Work over an affine open $U\subset Z$ on which the normal sequence of Lemma~\ref{lem:formalnormal} is split. Put
\[
N:=\OO_F(-1)\otimes q_F^*G,
\qquad
N_0:=\OO_F(-1)\otimes q_F^*G_0.
\]

After shrinking $U$ further, trivialize $\cS$, $G_0$, and $L:=G/G_0$ on $U$, so $F_U\simeq\PP^1\times U$ and $N=N_0\oplus N_1$, where $N_1=\OO_F(-1)\otimes q_F^*L$. The formal-neighborhood induction uses coordinate changes homogeneous in the normal variables. Horizontal terms of degree $m$ form
\[
T_{F_U}\otimes\Sym^mN^\vee.
\]
Vertical terms preserving the subbundle $N_0$ form the actual kernel sheaf
\[
\mathcal V_m=
\ker\!\left(N\otimes\Sym^mN^\vee
\longrightarrow N_1\otimes\Sym^mN_0^\vee\right).
\]
With the chosen splitting it is the direct sum of
\[
N_0\otimes\Sym^mN^\vee
\quad\text{and}\quad
N_1\otimes
\bigoplus_{j=1}^m\Sym^{m-j}N_0^\vee\otimes\Sym^jN_1^\vee.
\]
The second summand expresses that the component normal to the divisor is divisible by its defining normal coordinate. Each vertical summand has fiber degree $m-1$; the horizontal summands have fiber degrees $m$ and $m+2$, using $T_{\PP^1}=\OO_{\PP^1}(2)$.

For every $m\ge1$ these sheaves have vanishing $H^1$ on $F_U$, since $U$ is affine and $H^1(\PP^1,\OO(k))=0$ for $k\ge-1$. The first-order splitting and the differences between successive lifts are governed by these horizontal and pair-preserving vertical sheaves. Their difference cocycles can therefore be killed at each order. This verifies the required vanishing for the kernel sheaves themselves; no vanishing is inferred merely from their being subquotients of other sheaves.

Thus the formal coordinates can be straightened successively to every order, compatibly with $B$. Passing to the inverse limit gives the isomorphism of completed pairs in \eqref{eq:formalpair}.
\end{proof}

\begin{theorem}\label{thm:normalization}
Let
\[
\Rtilde\xrightarrow{\psi}R^\nu\xrightarrow{\nu}R
\]
be the Stein factorization of $\Phi$. Then $R^\nu$ is the normalization of $R$, and $\psi$ is a small resolution. Its exceptional locus is $F$, of dimension six, and its only nontrivial fibers are the $\PP^1$-fibers of $F\to Z$.

At a point $z\in Z$, after local trivialization,
\begin{equation}\label{eq:formalring}
\widehat{\OO}_{R^\nu,z}\simeq
\widehat{\OO}_{Z,z}[[x_{11},x_{12},x_{13},x_{14},x_{21},x_{22},x_{23},x_{24}]]/I_2(X),
\end{equation}
where $X=(x_{ij})$ is a $2\times4$ matrix. The contraction of the boundary has the analogous $2\times3$ determinantal formal model. These are formal local statements, not a global identification of $R^\nu$ with an affine bundle-cone.
\end{theorem}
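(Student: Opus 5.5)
The plan is to read off everything from the formal linearization of Theorem~\ref{thm:formallinearization}, combined with the fiber description of Theorem~\ref{thm:principalcomponent}.

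\emph{Stein factorization and smallness.} Since $\Phi:\Rtilde\to R$ is proper and birational and $\Rtilde$ is smooth, hence normal, the Stein factorization $\Rtilde\xrightarrow{\psi}R^\nu\xrightarrow{\nu}R$ has $\psi_*\OO_{\Rtilde}=\OO_{R^\nu}$. Therefore $R^\nu$ is normal, and $\nu$ is finite and birational. This identifies $R^\nu$ with the normalization of $R$.

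By Theorem~\ref{thm:principalcomponent}(iii), the positive-dimensional fibers of $\Phi$, and hence of $\psi$, are exactly the $\PP^1$-fibers of $F=\PP_Z(\cS)\to Z$. All other fibers of $\psi$ are connected and finite, hence points, so $\psi$ is an isomorphism off $F$. The exceptional locus $F$ has dimension $\dim Z+1=6$. This is codimension four in the tenfold $\Rtilde$, so $\psi$ is small.

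\emph{Formal model at $z\in Z$.} By the theorem on formal functions, $\widehat{\OO}_{R^\nu,z}=\varprojlim H^0$ of the structure sheaf of the infinitesimal neighborhoods of the fiber $\psi^{-1}(z)\simeq\PP^1$. Theorem~\ref{thm:formallinearization} replaces the formal neighborhood of $F$ in $\Rtilde$, Zariski locally over $Z$, by the formal neighborhood of the zero section in $\Tot(\OO_F(-1)\otimes q_F^*G)\simeq\Tot_{\PP^1\times U}(\OO(-1)^{\oplus4})$. So it suffices to compute the ring of global functions on the completion of $\Tot_{\PP^1}(\OO(-1)^{\oplus4})$ along its zero section, completed over $\widehat{\OO}_{Z,z}$.

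That ring is $\prod_m H^0(\PP^1,\Sym^m(\OO(1)^{\oplus4}))$, completed. Concretely, it is generated by the $8$ products $x_{ij}=u_iy_j$ of the homogeneous coordinates $u_1,u_2$ on $\PP^1$ and the fiber coordinates $y_1,\dots,y_4$. Its relations are exactly the $2\times2$ minors of the $2\times4$ matrix $(x_{ij})$; this is the Segre cone over $\PP^1\times\PP^3$. This gives \eqref{eq:formalring}.

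The same computation applies to the pair. Restricting to $B$ uses $G_0$ of rank three, so it yields the cone over $\PP^1\times\PP^2$, namely $I_2$ of a $2\times3$ matrix. The contraction of $B$ therefore has the $2\times3$ determinantal formal model, compatible with Proposition~\ref{prop:Sigma}.

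\emph{Main obstacle.} The delicate step is justifying the identification of $\widehat{\OO}_{R^\nu,z}$ with the completed algebra of the linearized model.

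\begin{itemize}
\item One must check that the formal isomorphism of Theorem~\ref{thm:formallinearization}, which holds only along $F$ over an affine $U\ni z$ and after trivializations, induces an isomorphism of the inverse systems $H^0(\Rtilde_n,\OO)$.
\item One must also check that the completion at $z$ is taken along $\psi^{-1}(z)$ rather than along all of $F_U$. For this I would complete the $U$-relative Segre algebra at the ideal of $z$, using that the fibered model is flat over $U$.
\item Finally, one needs $H^1(\PP^1,\OO(m))=0$ for $m\ge0$, so that the graded pieces of the inverse system surject and the completion is the stated power series quotient. This vanishing also gives $R^1\psi_*\OO=0$ formally, which is consistent with the rational-singularity claims used later.
\end{itemize}
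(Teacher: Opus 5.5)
Your proposal is correct and follows essentially the paper's argument: the Stein factorization identifies $R^\nu$ with the normalization, Theorem~\ref{thm:principalcomponent} supplies the $\PP^1$ fibers over $Z$ and hence smallness, and Theorem~\ref{thm:formallinearization} together with the theorem on formal functions yields the completed Segre ring $\bigoplus_m\Sym^m\cS^\vee\otimes\Sym^mG^\vee$, with $G_0$ giving the boundary $2\times3$ model. The steps you flag as delicate (matching the inverse systems, and completing at $z$ rather than along $F_U$) are treated no more explicitly in the paper, which simply completes the relative Segre algebra along the vertex.
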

\begin{proof}
The source $\Rtilde$ is smooth and $R$ is integral. A proper birational morphism from a normal variety factors through the normalization; the induced proper birational morphism to the normal target has direct image of the structure sheaf equal to the structure sheaf. Hence the finite morphism in the Stein factorization is precisely the normalization $\nu:R^\nu\to R$.

Theorem~\ref{thm:principalcomponent} gives connected geometric fibers for $\Phi$: the only nontrivial ones are the $\PP^1$ fibers over $Z$. By the defining property of Stein factorization, the fibers of $\psi$ are the connected components of the fibers of $\Phi$, so the same $\PP^1$'s are exactly the positive-dimensional fibers of $\psi$. Since $\dim F=6$ and $\dim\Rtilde=10$, no divisor is contracted. Away from $F$ the map is proper, quasi-finite and birational onto a normal scheme, hence an isomorphism. Thus $\psi$ is a small resolution.

By Theorem~\ref{thm:formallinearization} and the theorem on formal functions, the completed contraction algebra along $Z$ is the completion of
\begin{equation}\label{eq:conering}
\bigoplus_{m\ge0}\Sym^m\cS^\vee\otimes\Sym^mG^\vee.
\end{equation}
Indeed
\[
q_{F*}\OO_F(m)=\Sym^m\cS^\vee,
\qquad R^iq_{F*}\OO_F(m)=0\quad(i>0,m\ge0).
\]
After trivializing the rank-two bundle $\cS$ and the rank-four bundle $G$, \eqref{eq:conering} is the Segre ring generated by the products $u_iv_j$; its defining ideal is the ideal of $2\times2$ minors of a generic $2\times4$ matrix. Completion along the vertex gives \eqref{eq:formalring}. Replacing $G$ by the rank-three subbundle $G_0$ gives the boundary $2\times3$ model.
\end{proof}

\begin{corollary}\label{cor:singularities}
The normalization $R^\nu$ is Cohen--Macaulay, its reduced singular locus is $Z$ of codimension five, and its transverse singularities are rational rank-one determinantal singularities. In characteristic zero they are rational singularities. None of these statements by itself proves $R=R^\nu$.
\end{corollary}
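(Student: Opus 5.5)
The corollary is local along $Z$, so the plan is to work entirely with the formal model in Theorem~\ref{thm:normalization}. Off $Z$, $\psi$ is an isomorphism from the smooth tenfold $\Rtilde\setminus F$ onto $R^\nu\setminus Z$, so that open set is smooth. At $z\in Z$, equation \eqref{eq:formalring} identifies $\widehat{\OO}_{R^\nu,z}$ with a power series ring over the regular complete ring $\widehat{\OO}_{Z,z}$ (dimension five), tensored with the completed Segre ring $D=\CC[[x_{ij}]]/I_2(\mathbf x)$ for a generic $2\times4$ matrix.

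First, Cohen--Macaulayness. The ring $D$ is the completion at the vertex of the homogeneous coordinate ring of $\PP^1\times\PP^3$ in its Segre embedding. This ring is a Cohen--Macaulay normal domain of dimension five, by Hochster--Eagon or the standard theory of determinantal rings. Adjoining formal variables and completing preserves the Cohen--Macaulay property. Since the property can be tested after completion, $R^\nu$ is Cohen--Macaulay at every point of $Z$, and hence everywhere. The dimension count $5+5=10$ matches $\dim R^\nu$.

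Second, the singular locus. A complete local ring of this form is regular exactly when $D$ is regular. But $D$ has embedding dimension $8>5=\dim D$, so every $z\in Z$ is singular. Since $Z$ is closed and all other points are smooth, $\Sing(R^\nu)_{\mathrm{red}}=Z$. Its codimension is $10-5=5$, as $\dim Z=3+\dim\Gr(2,3)=5$.

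Third, rationality. Here I would use the small resolution $\psi$ directly. The formal-functions computation in the proof of Theorem~\ref{thm:normalization} already gives $R^iq_{F*}\OO_F(m)=0$ for $i>0$ and $m\ge0$. From this, the graded pieces of $R^i\psi_*\OO_{\Rtilde}$ completed along $Z$ vanish, so $R^i\psi_*\OO_{\Rtilde}=0$ for $i>0$. Normality gives $\psi_*\OO_{\Rtilde}=\OO_{R^\nu}$. Since $\Rtilde$ is smooth, these two facts say $R^\nu$ has rational singularities in characteristic zero. The same conclusion also follows from the transverse model, since the affine cone over $\PP^1\times\PP^3$ is a quotient (toric) singularity and is known to be rational.

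The main obstacle is bookkeeping rather than substance: the vanishing of $R^i\psi_*$ has to be passed from the formal neighbourhood to the actual sheaf. This uses the theorem on formal functions applied to the coherent sheaf $R^i\psi_*\OO$, which is supported on $Z$ if nonzero, together with the fact that a coherent sheaf is zero if its completion along its support is zero. The closing sentence of the corollary, that none of this by itself proves $R=R^\nu$, needs no proof. It is a caveat: $\nu$ could still fail to be an isomorphism, and that question is settled later by the ambient deformation computation.
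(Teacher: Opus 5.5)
Your argument is correct. For Cohen--Macaulayness and the singular locus it is the paper's argument: both are read off the formal model \eqref{eq:formalring}, together with the fact that $\psi$ is an isomorphism off $F$. Your embedding-dimension count $8>5$ just makes explicit why the vertex is singular.

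The real difference is in the rationality step. The paper proves rationality only for the transverse model, using its standard resolution $\Tot_{\PP^1}(\OO(-1)^{\oplus4})$ and $H^i(\PP^1,\OO(m))=0$ for $i>0$, $m\ge0$. It then transports rationality to $R^\nu$ through the identification of completed local rings. This is short, but it tacitly uses that rationality is detected on completions (e.g.\ via Artin approximation and \'etale locality).

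You instead prove $R\psi_*\OO_{\Rtilde}=\OO_{R^\nu}$ for the actual resolution $\psi$ by formal functions along $F$, which removes that transfer step. It also needs less input than you suggest. Only the conormal filtration $I_F^m/I_F^{m+1}\simeq\Sym^mN_{F/\Rtilde}^\vee$ enters. By Lemma~\ref{lem:formalnormal}, over affine opens of $Z$ its pieces are $\OO_F(m)\otimes q_F^*\Sym^mG^\vee$. The projection formula then reduces everything to $R^iq_{F*}\OO_F(m)=0$, without invoking the full formal linearization. When you write it out, say explicitly that the $\mathcal I_Z\OO_{\Rtilde}$-adic and $I_F$-adic systems are cofinal, since formal functions is stated for the former.

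One correction to your side remark: the cone over $\PP^1\times\PP^3$ is not a quotient singularity in the usual finite-group sense. It has a small resolution, so it is not $\mathbb Q$-factorial. It is, however, toric, and also a good $\mathbb G_m$-quotient of $\CC^6$; either fact (Kempf, or Boutot) gives rationality.
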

\begin{proof}
The affine cone over $\PP^1\times\PP^3$ is a normal Cohen--Macaulay determinantal variety, smooth away from its vertex. Its standard resolution by $\Tot_{\PP^1}(\OO(-1)^{\oplus4})$ has vanishing higher direct images of the structure sheaf because $H^i(\PP^1,\OO(m))=0$ for $i>0$ and $m\ge0$. Thus the transverse singularity is rational. Theorem~\ref{thm:normalization} identifies the completed local rings along $Z$ with this model over the smooth base $Z$, while $\psi$ is an isomorphism away from $F$; hence the asserted Cohen--Macaulay, singular-locus, codimension and rational-singularity statements follow.
\end{proof}

\subsection{Classification of geometric points}
The local deformation calculation below needs the global fact that there are no further geometric branches. We first establish that statement independently of scheme structure.

\begin{lemma}\label{lem:pointsexhaust}
Every geometric point of $M^{ss}$ belongs to $R\cup C$, where
\[
C:=X\times\Gr(2,V),\qquad (p,L)\longmapsto[I_{E_L}[1]\oplus\OO_p].
\]
\end{lemma}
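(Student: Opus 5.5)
We classify polystable points. A geometric point of $M^{ss}$ is the S-equivalence class of a $\sigma$-semistable object $E$ of class $-v_5$, represented by its polystable object. By Lemma~\ref{lem:phaseone} and Corollary~\ref{cor:budgetouter}, $E$ sits in $0\to F[1]\to E\to Q\to0$ with $\operatorname{length}(Q)=m\le1$ and $F$ tilt-semistable of slope zero and class $v_5+m\pt$. We split into the cases $m=1$ and $m=0$.

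If $m=1$, Proposition~\ref{prop:d5reconstruction} gives $F\simeq I_{E_L}$ for a unique $L$. Then $Q=\OO_p$, and by Proposition~\ref{prop:CI} the graded object is $I_{E_L}[1]\oplus\OO_p$. Hence the point lies in $C$.

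If $m=0$, then $E=F[1]$ with $F$ tilt-semistable of class $v_5$ on the outer segment. We first show that $F$ is a rank-one torsion-free sheaf. Corollary~\ref{cor:transport} and Proposition~\ref{prop:nolarger} exclude walls on the vertical ray, so the large-$\alpha$ argument in Proposition~\ref{prop:d5reconstruction} applies verbatim. Since $c_1=0$ and $H^1(\OO_X)=0$, we get $F\simeq I_Y$, where $Y$ is a one-dimensional subscheme with Hilbert polynomial $5m+1$. The phase-one object $E$ may still be strictly semistable. If $Y$ has embedded or isolated points, then the pure part $C_0$ satisfies $0\to I_Y\to I_{C_0}\to Q'\to0$, and $\OO_p\subset I_Y[1]$ destabilizes. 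Since $I_{C_0}$ has class $v_5+k\pt$ and is tilt-semistable (being a subobject-free quotient in the same phase category), the budget forces $k=1$. By Proposition~\ref{prop:d5reconstruction}, $C_0=E_L$, and the graded object again lies in $C$. Thus we reduce to $Y=C$ a Cohen--Macaulay curve of degree five and genus zero, with $E=I_C[1]$ stable by Lemma~\ref{lem:stablecriterion}.

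The main step is to show $[I_C[1]]\in R$, reversing Lemma~\ref{lem:offboundarystable}. Riemann--Roch gives $\chi(I_C(H))=7-6=1$, so we need $h^0(I_C(H))\ge1$, i.e.\ $C$ lies in a hyperplane section $S$. We would derive this from vanishing: $H^2(I_C(H))=H^1(\OO_C(H))$, which is zero because the generic such curve is a rational quintic and, in general, Serre duality on the CM curve $C$ bounds $h^1(\OO_C(1))$. Failing this, we would argue via the tilt/Bridgeland wall at $W_5$: $\OO_X(-1)$ destabilizes $I_C$ there, so $\Hom(\OO_X(-1),I_C)\ne0$, which is exactly what is needed. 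We expect this wall argument to be the cleanest route. Given a section $s$, the quotient $I_C(H)/\OO_X=i_*I_{C/S}(H)$ has class $w$ and is Gieseker-stable; for this, $S$ must be integral, because $\operatorname{Pic}X=\ZZ H$ makes hyperplane sections irreducible and reduced. Therefore it is a point of $M_H(w)$. It lies either in the root component or in the point-on-hyperplane component. In the latter case, $I_C(H)$ would contain an $I_p$ structure, forcing a non-Cohen--Macaulay point, which is excluded. So the quotient lies in $\mathcal R_{\mathrm{root}}$. By Theorem~\ref{thm:graphblowup}, $r$ is surjective onto $\mathcal R_{\mathrm{root}}$, so it lifts to a point of $\Ntilde$, and the extension class gives a point of $\PP(\cE_5)=\Rtilde$. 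Hence $E\in\Phi(\Rtilde)=R$ by Theorem~\ref{thm:principalcomponent}. The main obstacle is justifying the existence of the hyperplane section and the exclusion of the point component for arbitrary, possibly nonreduced, CM curves $C$.
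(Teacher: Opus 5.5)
Your treatment of $m=1$ and of the saturation defect matches the paper, with one repair. $I_{C_0}$ is tilt-semistable at the outer point not because it is a ``subobject-free quotient''. The reason is that it is rank-one torsion-free, hence slope-stable and tilt-stable for large $\alpha$, and Proposition~\ref{prop:nolarger} excludes walls on the vertical ray whatever $\ch_3$ is.

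The genuine gap is the step you flag yourself: producing a nonzero map $\OO_X(-1)\to I_C$ for an \emph{arbitrary} Cohen--Macaulay curve $C$ with $P_C(m)=5m+1$. Neither of your routes gives $\Hom(\OO_X(-1),I_C)\ne0$.
\begin{itemize}
\item Your first route needs $H^1(\OO_C(H))=0$. The behaviour of a general rational quintic says nothing about special or nonreduced $C$, and ``Serre duality on $C$ bounds $h^1$'' is not an argument.
\item Your second route asserts that $\OO_X(-1)$ is the destabilizer of $I_C$ at $W_5$. Proposition~\ref{prop:inner} does force $I_C$ to be destabilized there. But $W_5$ is only a numerical wall, shared by other classes; for instance $\OO_X(-2)[1]$ also has slope zero at $p_5$. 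Moreover $D_5(v_5)=1/5>0$, so Proposition~\ref{prop:d5zerofactors} gives no control over the terminal factors of $I_C$.
\end{itemize}

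The missing idea is to get the vanishing of $H^2(I_C(H))$ from stability at the terminal point rather than from curve geometry. Transport $I_C$ to $p_5$ by Corollary~\ref{cor:transport}. There it is tilt-semistable of slope zero, while $\OO_X(-3)[1]$ is tilt-stable of strictly negative slope. Serre duality then gives
\[
H^2(I_C(H))=\Ext^2(\OO_X(-1),I_C)\simeq\Hom(I_C,\OO_X(-3)[1])^\vee=0,
\qquad
\Ext^3(\OO_X(-1),I_C)\simeq\Hom(I_C,\OO_X(-3))^\vee=0.
\]
Together with your $\chi(I_C(H))=1$, this forces $\dim\Hom(\OO_X(-1),I_C)\ge1$, with no reducedness assumption.

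Your exclusion of the point component should also be made precise. If the residual is $i_*I_{p/S}(-1)$ with extension class $g$ modulo $s$, the pushout of Lemma~\ref{lem:boundaryelementary} gives $0\to I_C\to I_{E_{\langle s,g\rangle}}\to\OO_p\to0$, contradicting purity of $C$. APR place the intersection of the two components inside the point component, so the residual then lies over $\Ntilde\setminus J$. There the fiber of $\cE_5$ is the full five-dimensional extension space, and that is what puts your extension in $\Rtilde$.
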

\begin{proof}
Let $E$ be a polystable representative. Lemma~\ref{lem:phaseone} and Corollary~\ref{cor:budgetouter} give
\[
0\longrightarrow F[1]\longrightarrow E\longrightarrow Q\longrightarrow0,
\qquad \ch(F)=v_5+m\pt,\quad m=\operatorname{length}Q\le1.
\]
If $m=1$, then $Q=\OO_p$ and Proposition~\ref{prop:d5reconstruction} gives $F\simeq I_{E_L}$ for a unique $L$. Hence the S-equivalence class of $E$ is the point $(p,L)$ of $C$.

Assume $m=0$. As in the first paragraph of the proof of Proposition~\ref{prop:d5reconstruction}, vertical transport to large $\alpha$ shows that $F$ is a rank-one torsion-free sheaf. Since $c_1(F)=0$ and $\operatorname{Pic}(X)=\ZZ H$, its reflexive hull is $F^{\vee\vee}\simeq\OO_X$, so $F=I_Z$ for a subscheme of codimension at least two. Let
\begin{equation}\label{eq:saturation}
0\longrightarrow F\longrightarrow I_C\longrightarrow Q_0\longrightarrow0
\end{equation}
be the saturation with respect to the one-dimensional part. Then $C$ is pure one-dimensional, hence Cohen--Macaulay on the smooth threefold, and $Q_0$ is zero-dimensional of length $r$. The ideal $I_C$ is slope-stable, and Proposition~\ref{prop:nolarger} transports it to the outer point as a tilt-stable object of class $v_5+r\pt$. Corollary~\ref{cor:budgetouter} therefore gives $r\le1$.

If $r=1$, Proposition~\ref{prop:d5reconstruction} gives $I_C\simeq I_{E_L}$, and shifting \eqref{eq:saturation} shows that $F[1]$ is S-equivalent to $I_{E_L}[1]\oplus\OO_p$. Thus the point again lies in $C$.

It remains to treat $r=0$, so $F=I_C$ for a Cohen--Macaulay quintic with Hilbert polynomial $5m+1$. At the terminal point,
\[
\chi(\OO_X(-1),I_C)=\chi(I_C(1))=1.
\]
Moreover
\[
\Ext^2(\OO_X(-1),I_C)
 \simeq\Hom(I_C,\OO_X(-3)[1])^\vee=0
\]
by the terminal tilt slopes, and
\[
\Ext^3(\OO_X(-1),I_C)
 \simeq\Hom(I_C,\OO_X(-3))^\vee=0.
\]
Hence $\Hom(\OO_X(-1),I_C)\ne0$. A nonzero map is injective and determines a hyperplane section $S=(s=0)$ containing $C$:
\begin{equation}\label{eq:residualfromsaturated}
0\longrightarrow\OO_X(-1)\xrightarrow{s} I_C
\longrightarrow i_*I_{C/S}\longrightarrow0.
\end{equation}
After twisting by $\OO_X(1)$, the quotient has the APR class $w$. It is rank-one torsion-free on the integral surface $S$, hence Gieseker-stable. By \cite[Proposition~4.1]{APR}, the Gieseker moduli of class $w$ is the union of the root component and the point-on-hyperplane component, with their intersection contained in the latter. Suppose first that the quotient in \eqref{eq:residualfromsaturated} lies in the point component. Before twisting it is then $i_*I_{p/S}(-1)$. Writing an extension class as $g$ modulo $s$, the same pushout calculation as in Lemma~\ref{lem:boundaryelementary} gives
\[
0\longrightarrow I_C\longrightarrow I_{E_{\langle s,g\rangle}}\longrightarrow\OO_p\longrightarrow0.
\]
Thus $I_C$ has a nontrivial zero-dimensional saturation defect, contradicting $r=0$. Hence the residual point lies in the root component but outside its intersection with the point component. There the full five-dimensional extension space is the principal root-extension space, so \eqref{eq:residualfromsaturated} gives a point of $R$. This completes the proof.
\end{proof}

\subsection{Cross deformations and the complete equivariant local model}
Fix a corrected closed point
\[
c=(p,L)\in C,\qquad E_c=I_{E_L}[1]\oplus\OO_p,
\]
and put $S_c=\widehat{\OO}_{C,c}$. Choose a local frame $L=\langle s_1,s_2\rangle$ and a local trivialization of $\OO_X(H)$ along the moving point. The first jets of the two sections give a $4\times2$ matrix
\begin{equation}\label{eq:firstjetmatrix}
\begin{pmatrix}
e_1&e_2\\
m_{11}&m_{21}\\
m_{12}&m_{22}\\
m_{13}&m_{23}
\end{pmatrix}.
\end{equation}
Here $e_i=s_i(p)$; on the locus $e_1=e_2=0$ the lower $3\times2$ block represents the map $L\to(W/K)_p$. Near $Z$, the five parameters along $Z$, the two $e_i$, and the six $m_{ij}$ form a regular system of parameters, exactly as in the Grassmannian chart of Proposition~\ref{prop:Sigma}.

\begin{lemma}\label{lem:crossext}
At $c=(p,L)$, the cross-Ext spaces of $E_c=I_{E_L}[1]\oplus\OO_p$ have dimensions
\[
\begin{array}{c|cc}
 &p\in E_L&p\notin E_L\\ \hline
\Ext^1(I_{E_L}[1],\OO_p)&2&1\\
\Ext^1(\OO_p,I_{E_L}[1])&1&0.
\end{array}
\]
The total tangent dimension is respectively $16$ or $14$. The two cross directions have opposite weights for the effective stabilizer $T\simeq\mathbb G_m$.
\end{lemma}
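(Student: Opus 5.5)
The plan is to compute each cross-Ext group directly from the Koszul resolution \eqref{eq:koszul}, then assemble the tangent space from the block decomposition of $\Ext^1(E_c,E_c)$, and finally read off the torus weights from the block structure of $\operatorname{Aut}(E_c)$.

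\emph{First cross direction.} Shifting gives
\[
\Ext^1(I_{E_L}[1],\OO_p)=\Hom(I_{E_L},\OO_p)=\Hom(I_{E_L}\otimes k(p),k(p)).
\]
Apply $\Hom(-,\OO_p)$ to \eqref{eq:koszul}. This yields the two-term complex
\[
k(p)^{\oplus2}\xrightarrow{(s_1(p),s_2(p))}k(p)
\]
in degrees $0,1$, since $\OO_X(-1)^{\oplus2}$ sits in degree $0$ and $\OO_X(-2)$ in degree $1$. The arrow is the transpose of the Koszul map evaluated at $p$. Hence $\Hom(I_{E_L},\OO_p)$ is the kernel of $(e_1,e_2)$. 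It is two-dimensional if $p\in E_L$, i.e.\ $e_1=e_2=0$, and one-dimensional otherwise.

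\emph{Second cross direction.} Here
\[
\Ext^1(\OO_p,I_{E_L}[1])=\Ext^2(\OO_p,I_{E_L}).
\]
By Serre duality, and since $\omega_X|_p$ is trivial on the skyscraper, this is $\Ext^1(I_{E_L},\OO_p)^\vee$. The same two-term complex computes $\Ext^1(I_{E_L},\OO_p)$ as the cokernel of $(e_1,e_2)$. That cokernel has dimension $1$ if $p\in E_L$ and $0$ otherwise, which completes the table.

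\emph{Tangent dimension.} Decompose
\[
\Ext^1(E_c,E_c)=\Ext^1(I_{E_L},I_{E_L})\oplus\Ext^1(\OO_p,\OO_p)\oplus(\text{two cross terms}).
\]
\begin{itemize}
\item $\Ext^1(\OO_p,\OO_p)=T_pX$ has dimension $3$.
\item For $\Ext^1(I_{E_L},I_{E_L})$, apply $\Hom(-,I_{E_L})$ to \eqref{eq:koszul}. This reduces the computation to the cohomology groups $H^i(I_{E_L}(1))$ and $H^i(I_{E_L}(2))$.
\item These are computed by the Koszul sequence twisted by $\OO_X(1)$ and $\OO_X(2)$, together with Kodaira vanishing. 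One finds $h^0(I_{E_L}(1))=2$, $h^0(I_{E_L}(2))=2\cdot 7-1=13$, and no higher cohomology.
\item This gives $\chi(I_{E_L},I_{E_L})=2\cdot 2\cdot 1\ldots$; it is cleaner to argue directly that $\Hom=\CC$, that $\Ext^1=\Hom(L,V/L)$ has dimension $10$, and that $\Ext^2=\Ext^3=0$. This matches the smoothness of $\Gr(2,V)$ as the corrected-factor moduli in Proposition~\ref{prop:d5reconstruction}.
\end{itemize}
Summing, the tangent dimension is $10+3+2+1=16$ or $10+3+1+0=14$. The only real bookkeeping is this $\Ext^1(I_{E_L},I_{E_L})$ computation, and I would verify it via the long exact sequence rather than rely on the dimension of the Grassmannian alone.

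\emph{Weights.} The summands are non-isomorphic stable objects, so $\operatorname{Aut}(E_c)=\mathbb G_m\times\mathbb G_m$. The diagonal acts trivially on $\Ext^1$, so the effective stabilizer is $T=\{(t,1)\}$. An element $(t_1,t_2)$ acts on $\Ext^1(I_{E_L}[1],\OO_p)$ by $t_2t_1^{-1}$ and on $\Ext^1(\OO_p,I_{E_L}[1])$ by $t_1t_2^{-1}$. It acts trivially on the diagonal blocks. The two cross directions therefore carry opposite weights $\mp1$ for $T$.
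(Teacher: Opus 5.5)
Your proposal is correct and follows essentially the same route as the paper: the Koszul resolution computes $\Hom(I_{E_L},\OO_p)$ as the kernel of the evaluated transpose map, duality handles the reverse group, the diagonal blocks contribute $10+3$, and the weights come from the $\mathbb G_m^2$ block action. The only differences are minor. You get the reverse group as the dual of a cokernel via Serre duality, where the paper writes it as a kernel via local duality. You also obtain $\dim\Ext^1(I_{E_L},I_{E_L})=13-4+1=10$ (together with $\Ext^2=\Ext^3=0$) directly from $\Hom(-,I_{E_L})$ applied to the Koszul resolution, using $\Hom(I_{E_L},I_{E_L})=\CC$, where the paper instead cites the normal bundle $\OO_{E_L}(H)^{\oplus2}$ and unobstructedness.
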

\begin{proof}
The Koszul resolution
\[
0\to\OO_X(-2)\xrightarrow{(-s_2,s_1)}\OO_X(-1)^{\oplus2}
\to I_{E_L}\to0
\]
computes $\Hom(I_{E_L},\OO_p)$ as the kernel of
$\CC^2\xrightarrow{(-e_2,e_1)}\CC$. Applying local duality computes
$\Ext^2(\OO_p,I_{E_L})$ as the kernel of
$\CC\xrightarrow{(e_1,e_2)^t}\CC^2$.
These give the table. The linear-section ideal has ten tangent directions and is unobstructed: its normal bundle is $\OO_{E_L}(H)^{\oplus2}$, whose $H^1$ vanishes by the Koszul cohomology calculation. The point has its three-dimensional smooth deformation space. Adding the diagonal and cross terms gives the dimensions. The two stable nonisomorphic summands have automorphism group $\mathbb G_m^2$; the diagonal scalar acts trivially and the two off-diagonal Hom spaces have opposite characters.
\end{proof}

The relative cross-Hom complexes give the linear relations
\[
a_1e_2-a_2e_1=0,\qquad be_1=be_2=0.
\]
The mixed differentials give the displayed linear relations; first derivatives suggest $b(a_1m_{2k}-a_2m_{1k})$. The next theorem proves their completeness. Its first step obtains a relative Stasheff syzygy and computes the mixed Yoneda term. Its second step uses that syzygy to normalize the full equations, then absorbs the higher invariant terms by formal coordinates on each rank stratum.

\begin{theorem}\label{thm:kuranishi}
For every corrected closed point $c\in C$, there is a $\mathbb G_m^2$-equivariant semi-universal slice whose completed deformation ring is, compatibly with the diagonal family over $C$,
\begin{equation}\label{eq:slicering}
A_c=S_c[[a_1,a_2,b]]\Big/
\left(
a_1e_2-a_2e_1,\ be_1,\ be_2,\
b(a_1m_{2k}-a_2m_{1k})\ (1\le k\le3)
\right).
\end{equation}
The diagonal scalar acts trivially. The effective torus has weights
\[
\operatorname{wt}(a_1)=\operatorname{wt}(a_2)=1,
\qquad\operatorname{wt}(b)=-1,
\qquad\operatorname{wt}(S_c)=0,
\]
and the family $C$ corresponds to $a_1=a_2=b=0$. The identification is noncanonical, restricts to the given diagonal family, and is valid on every commutative local Artin $\CC$-algebra with residue field $\CC$.
\end{theorem}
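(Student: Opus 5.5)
The plan is to build the equivariant Kuranishi slice at $E_c=I_{E_L}[1]\oplus\OO_p$ in three stages: the smooth diagonal part, the linear cross relations, and then the single higher relation.

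\textbf{Diagonal part.} Start from the Ext-algebra $\Ext^\bullet(E_c,E_c)$ with its $\mathbb G_m^2$-action. The diagonal self-Ext summands are unobstructed by Lemma~\ref{lem:crossext}: $I_{E_L}$ has ten smooth deformation directions and $\OO_p$ three. They are therefore realized by the actual diagonal family over $C$. This gives the base ring $S_c=\widehat{\OO}_{C,c}$.

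\textbf{Linear cross relations.} Work relatively over $\operatorname{Spf}S_c$ using the universal Koszul family for $I_{E_L}$ and the universal point. The relative cross complexes $R\Hom(I_{E_L}[1],\OO_p)$ and $R\Hom(\OO_p,I_{E_L}[1])$ are computed by the Koszul resolution. Their differentials are $(-e_2,e_1)$ and $(e_1,e_2)^t$. Hence relative cross deformations are parametrized by $a_1,a_2$ of weight $1$ and $b$ of weight $-1$, subject to the first-order relations $a_1e_2-a_2e_1=0$ and $be_1=be_2=0$. These are the obstruction classes landing in the relative $\Ext^2$ cross terms, and they reproduce the table of Lemma~\ref{lem:crossext} on fibers.

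\textbf{Quadratic mixed relation.} Identify the obstruction in the diagonal $\Ext^2$ spaces coming from the Yoneda product of the two cross classes, $\Ext^1(I_{E_L}[1],\OO_p)\times\Ext^1(\OO_p,I_{E_L}[1])\to\Ext^2$. Only products $a_ib$ have weight zero, so by equivariance these are the only degree-two mixed terms. On the stratum $e_1=e_2=0$, compute the product by composing through the first-jet data. The Koszul map evaluated to first order at $p$ is the lower $3\times2$ block $m_{ij}$ of \eqref{eq:firstjetmatrix}. The product lands in the three-dimensional space of obstructions dual to $(W/K)_p$ and gives $b(a_1m_{2k}-a_2m_{1k})$. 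The natural tool is a relative Stasheff ($A_\infty$) syzygy: a homotopy witnessing that the composite of the linear relations is zero. This makes the $m_{ij}$ appear as the correction terms.

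\textbf{Completeness (main obstacle).} I expect the hardest step to be proving that no further higher-order terms survive. Weight considerations restrict all invariant higher terms to functions of $S_c$ times monomials $a^\alpha b^\beta$ with $|\alpha|=\beta$. I would attack this with a stratified formal coordinate change, in two cases.
\begin{enumerate}
\item Off $e=0$, the linear relations already kill $b$ and one combination of the $a_i$. Any higher term then lies in the ideal and can be absorbed.
\item On $e=0$, use the Stasheff syzygy to write each higher $A_\infty$-product as the quadratic relation multiplied by a unit, plus elements of the ideal generated by the linear relations. Then rechoose $a_1,a_2,b$ by an order-by-order substitution that preserves weights; this converges in the adic topology.
\end{enumerate}

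Finally, check the ring against the fibers. Compare the tangent and obstruction dimensions with Lemma~\ref{lem:crossext}: $16$ versus $14$. Also compare the locus $b\ne0$ with the principal branch, whose ideal at $Z$ should be $2\times4$ minors by Theorem~\ref{thm:normalization}, and $a=b=0$ with $C$. This confirms that the presented ideal is the whole ideal and not a proper subideal. Equivariance and compatibility with the diagonal family are built into the construction, and the statement on Artin algebras follows from pro-representability of the slice.
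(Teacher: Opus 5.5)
There is a genuine gap in your completeness step, which is where the content of the theorem lies. Weight considerations and the curvature identity give only a normal form: $r_i=bG_i(S_c,q)$, $f=a_1G_2-a_2G_1$, $k_j=q_1H_{2j}-q_2H_{1j}$ with $q_i=a_ib$. Here $G_i(S_c,0)=e_i$ and $H_{ij}(S_c,0)=m_{ij}$ can be arranged, but the $q$-dependence of $G_i,H_{ij}$ is uncontrolled. Nothing rules out, say, $G_1=e_1+q_1$, $G_2=e_2$, which satisfies the Stasheff relation $bf-a_1r_2+a_2r_1=0$ exactly. Then $r_1=b(e_1+q_1)$ is not in the ideal of \eqref{eq:slicering}: modulo $(e_1,e_2,m_{ij})$ that ideal vanishes, while $r_1$ reduces to $a_1b^2\neq0$. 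So higher products are not in general unit multiples of the quadratic relation plus elements of the linear-relation ideal.

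Nor can a weight-preserving substitution in $a_1,a_2,b$ alone repair this. Such a substitution fixes every element of $S_c$ and multiplies $b$ by a unit of $S_c[[q_1,q_2]]$. But on the branch $b\neq0$ the first ring has $e_1=-a_1b\not\equiv0$, whereas the ring \eqref{eq:slicering} has $e_1=0$ there.

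The missing idea, which the paper uses, is to absorb the higher terms by a weight-zero change of the \emph{base} coordinates, $e_i'=G_i(S_c,q)$ and $m_{ij}'=H_{ij}(S_c,q)$. This is an automorphism of $S_c[[a_1,a_2,b]]$ only because, at $c\in Z$, surjectivity of first-jet evaluation $V\to J^1\OO_X(H)|_p$ makes $e_1,e_2,m_{ij}$ part of a regular system of parameters of $S_c$. It is the identity on $a=b=0$, which is exactly the compatibility with the diagonal family the statement requires. On $\Sigma\setminus Z$ and on the rank-two stratum, one first Gauss-reduces $M$ and changes only $e_1,e_2$ and the surviving entries $u,v$.

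The Stasheff identity serves a different purpose than the one you assign it. Projected to the one-dimensional $\Ext^3(\OO_p,\OO_p)$, its leading coefficients $b,-a_1,a_2$ are fixed by the relative Serre-duality pairing of the two mixed complexes. It then shows that the weight-one component $f$ carries no independent higher terms: after equivariant changes, $f=a_1G_2-a_2G_1$.

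Your closing check cannot replace this argument. Matching tangent and obstruction dimensions does not determine an ideal. Moreover, Theorem~\ref{thm:normalization} describes $R^\nu$, while $R=R^\nu$ and the scheme structure of $M^{ss}$ are deduced in Theorem~\ref{thm:ambientgluing} from the present theorem, so invoking them here is circular. Finally, the deformation functor at a point with stabilizer $\mathbb G_m^2$ is not pro-representable. The Artin-algebra statement instead comes from identifying the Maurer--Cartan formal scheme, with its scalar gauge action, in a minimal $A_\infty$ model.
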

\begin{proof}
Put
\[
F=I_{E_L}[1],\qquad P=\OO_p,\qquad E=F\oplus P,\qquad S=S_c.
\]
Use the full strong exceptional collection
$\langle\cU,\cQ^\vee,\OO_X,\OO_X(1)\rangle$
of \cite{OrlovV5}, recalled in \cite[Section~3]{APR}. Its direct sum is a tilting bundle, so the derived equivalence represents $E$ by a bounded h-projective complex of finitely generated projectives over its endomorphism algebra. Since $F,P$ lie in a common heart, $\Ext^i(E,E)=0$ for $i<0$. Thus \cite[Theorem~11.6(a)]{ELO} identifies derived and homotopy deformations on commutative local Artin $\CC$-algebras with residue field $\CC$. By \cite[Proposition~6.1 and Theorem~8.1]{ELO} and minimal-model transfer \cite[Sections~3 and~7]{Keller}, these are governed by the full Maurer--Cartan groupoid of a strictly unital minimal $A_\infty$ category on $F,P$ over $\CC\oplus\CC$.
In particular these equations give a complete deformation hull on all the
Artin bases used here. The Ext-quiver formulation is explained in
\cite[Sections~4--5]{Toda}. The idempotent decomposition gives the torus
weights and the allowed paths of all operations. Since
$\Ext^0(E,E)=\CC\oplus\CC$, strict unitality makes every higher operation
with a degree-zero input vanish. Thus degree-zero gauge is exactly scalar
change of basis at the two vertices.

Suppose first that $p\in E_L$. The Koszul resolution and local duality, together with Lemma~\ref{lem:crossext}, give
\begin{equation}\label{eq:formalexttable}
\begin{array}{c|rrrr}
 &\Ext^1(F,F)&\Ext^1(P,P)&\Ext^1(F,P)&\Ext^1(P,F)\\ \hline
\dim&10&3&2&1
\end{array}
\end{equation}
and
\begin{equation}\label{eq:formalobstructions}
\Ext^2(E,E)=\Ext^2(F,P)\oplus\Ext^2(P,F)\oplus\Ext^2(P,P),
\qquad \dim=(1,2,3),
\end{equation}
whereas
\begin{equation}\label{eq:formalextthree}
\Ext^3(E,E)=\Ext^3(P,P)\simeq\CC.
\end{equation}
Here $\Ext^2(F,F)=0$: the complete-intersection normal bundle is $\OO_{E_L}(H)^{\oplus2}$ and its higher cohomology vanishes. The ideal-sheaf deformation space is the smooth Grassmannian. Also $\Ext^3(F,F)=0$ by Serre duality, since $\Hom(I_{E_L},I_{E_L}(-2H))=0$. The other degree-three cross groups vanish by the length-one locally free resolution of the ideal and dimension three. Negative Ext groups vanish since the two objects belong to a common heart.

Both diagonal deformation functors are smooth and their product is $\Spf S$. Accordingly take the ambient formal power-series ring
\[
\mathscr R=S[[a_1,a_2,b]],\qquad q_i=a_i b.
\]
A Kuranishi map has six components, denoted
\begin{equation}\label{eq:formalkappa}
f,\quad r_1,r_2,\quad k_1,k_2,k_3,
\end{equation}
with weights $1,-1,-1,0,0,0$. All vanish along $a=b=0$. In particular
\[
r_i=bG_i(S,q),\qquad k_j\in(q_1,q_2)\subset\mathscr R^T=S[[q_1,q_2]].
\]
The point corner of the minimal model can be chosen to be the exterior algebra on $T_pX$, with no higher operations. Indeed, over $\CC[[x_1,x_2,x_3]]$, the contraction endomorphisms of the Koszul resolution anticommute, are closed, and generate its endomorphism cohomology. They give a dg-algebra quasi-isomorphism from the exterior algebra to that endomorphism algebra. An $A_\infty$ isomorphism putting the minimal point corner in this form extends to the two-object category: extend its bar-coalgebra automorphism by taking its higher Taylor coefficients to be zero on all other paths, and transport the full structure. This imposes no formality condition on the mixed operations. We use the suspension sign convention in which the Maurer--Cartan curvature is $\kappa(\xi)=\sum_n m_n(\xi,\ldots,\xi)$.

\medskip\noindent\emph{Step 1: paired normalization along the diagonal.}
The coordinates and obstruction frames may be chosen so that the terms involving just one cross variable are
\begin{equation}\label{eq:formalonecross}
\begin{aligned}
f&=a_1e_2-a_2e_1+\text{terms with at least three cross variables},\\
r_i&=be_i+\text{terms with at least three cross variables}.
\end{aligned}
\end{equation}
With these choices, the Stasheff curvature identity in the one-dimensional space \eqref{eq:formalextthree} has the form
\begin{equation}\label{eq:formalbianchi}
bU f-v_1r_2+v_2r_1+\sum_{j=1}^3 w_jk_j=0,
\end{equation}
where
\[
U\equiv1\pmod{(q)},\qquad
\binom{v_1}{v_2}=V_0(S,q)\binom{a_1}{a_2},\quad
V_0\equiv I_2\pmod{(q)},\qquad w_j\in(q).
\]
Along the incidence locus $e_1=e_2=0$, the part of the point obstruction with two cross variables is
\begin{equation}\label{eq:formalyoneda}
k_j=q_1m_{2j}-q_2m_{1j}\pmod{(q)^2,(e_1,e_2)(q)}.
\end{equation}
This congruence holds over the complete diagonal base, not only at its closed point.

\smallskip\noindent\emph{Relative mixed complexes.}
Let $\eta$ be the universal diagonal Maurer--Cartan element over $S$, and twist the minimal category by $\eta$. Its mixed complexes are free complexes of ranks $(2,1)$ and $(1,2)$ in degrees $(1,2)$. Twisting computes derived Hom between the corresponding universal families; this follows by applying the minimal-model quasi-equivalence to twisted objects, or by induction over the Artin quotients of $S$. The universal resolution
\[
0\longrightarrow\OO_X(-2H)\xrightarrow{(-s_2,s_1)}
\OO_X(-H)^{\oplus2}\longrightarrow I_{E_L}\longrightarrow0
\]
identifies the first with $[S^2\xrightarrow{(e_2,-e_1)}S]$, after a sign choice. This identification is a change of free bases: both descriptions are bounded minimal free complexes at the closed point. A derived isomorphism between such complexes is represented by a chain homotopy equivalence; its reduction modulo the maximal ideal is a degreewise isomorphism, hence so is the map itself by Nakayama's lemma.

\smallskip\noindent\emph{Paired differentials.}
All relative derived Hom complexes in this paragraph are taken on
$X_S=X\times\Spf S$, or equivalently on its compatible Artin thickenings.
Write $\omega=\operatorname{pr}_X^*K_X$, and let
$D_S(F_S,P_S)=\mathbf R\operatorname{Hom}_{X_S}(F_S,P_S)$.
Relative Serre duality and tensor adjunction give
\[
\begin{aligned}
D_S(F_S,P_S)^\vee
&\simeq \mathbf R\operatorname{Hom}_{X_S}(P_S,F_S\otimes\omega)[3]\\
&\simeq \mathbf R\operatorname{Hom}_{X_S}(P_S\otimes\omega^{-1},F_S)[3],
\end{aligned}
\]
where $(-)^\vee=\mathbf R\operatorname{Hom}_S(-,S)$.
Choose a trivialization of $K_X$ on the formal neighbourhood of the moving
point. Its restriction identifies $P_S\otimes\omega^{-1}$ with $P_S$,
and hence identifies the last complex with $D_S(P_S,F_S)[3]$.
The same trivialization identifies the relative Serre trace with a map
\[
\operatorname{tr}_{P_S}:\Ext^3_{X_S}(P_S,P_S)\xrightarrow{\ \sim\ }S.
\]
Thus ordinary composition into the point corner, followed by this trace,
realizes the above duality. Only the canonical bundle along the point
family is trivialized; no identification $F_S\otimes\omega\simeq F_S$
is used.

In the twisted minimal model the point-corner differential is zero.
Projection of its degree-three term to $S$ by $\operatorname{tr}_{P_S}$
is therefore a chain map. The $n=2$ twisted Stasheff identity makes
$\operatorname{tr}_{P_S}\circ m_2^\eta$ a pairing of the two mixed
complexes: for homogeneous $u,v$ its differential identity is
\[
\operatorname{tr}_{P_S}\bigl(m_2^\eta(m_1^\eta u,v)
       +(-1)^{|u|}m_2^\eta(u,m_1^\eta v)\bigr)=0.
\]
Here the order of the composable factors is chosen so that the output
lies in the point corner; suspension changes only the corresponding
sign convention. At the closed point these pairings are the perfect
Serre pairings. Their matrices in complementary degrees $(1,2)$ and
$(2,1)$ consequently have unit determinants over $S$. This supplies
perfect pairings of the free terms over the whole diagonal base, not
merely pairings of their closed-point cohomology.

Fix the Koszul bases in the first complex, the dual basis in reverse degree one, and the skew-dual bases in reverse degree two. In these bases the two pairings are respectively $fb$ and $a_1r_2-a_2r_1$. The $n=2$ twisted Stasheff identity says that composition is a chain map. Since the point-corner differential is zero, it forces the reverse differential to be $(e_1,e_2)^t$. Thus the one-cross curvature is exactly \eqref{eq:formalonecross}, and the cross-linear coefficients of the trace identity are $b,-a_1,a_2$, over all of $S$.

\smallskip\noindent\emph{The full curvature identity.}
The curved Stasheff identity is
\[
\sum_{r,t\ge0}\pm m_{r+1+t}
\bigl(\xi^{\otimes r},\kappa(\xi),\xi^{\otimes t}\bigr)=0,
\qquad \kappa(\xi)=\sum_{n\ge2}m_n(\xi^{\otimes n}).
\]
It is the identity $(m^\xi)_1((m^\xi)_0)=0$, obtained by conjugating the bar coderivation, whose square is zero; it holds before imposing any Maurer--Cartan equation. Project it to $\Ext^3(P,P)$. It is linear in the six components of $\kappa$. Weight considerations make its coefficient of $f$ divisible by $b$, and its coefficients of $r_i$ linear combinations of the $a_j$ over $S[[q]]$. Their values along $q=0$ are the perfect pairings just normalized. The coefficients of $k_j$ vanish along $q=0$: there only the point corner can contribute, and in its exterior algebra the degree-one element commutes with a degree-two element; its higher operations vanish. Thus the remaining weight-zero coefficients lie in $(q)$, giving \eqref{eq:formalbianchi}.

\smallskip\noindent\emph{The relative Yoneda coefficient.}
Work over $S_0=S/(e_1,e_2)$. The differentials in both mixed complexes vanish, so their bases represent the actual relative cross-Ext classes. In coordinates $x_k$ centered on the moving point write
\[
s_i=e_i+\sum_k m_{ik}x_k\pmod{(x)^2}.
\]
Put $R_0=S_0[[x_1,x_2,x_3]]$ and choose $t_{ik}\in R_0$ with $s_i=\sum_k x_kt_{ik}$, so $t_{ik}|_{x=0}=m_{ik}$. Let $Q_\bullet$ be the point Koszul resolution, with
\[
d_Q(Q_3)=x_1Q_{23}-x_2Q_{13}+x_3Q_{12}.
\]
In $\Hom_{R_0}(Q_\bullet,R_0^2)$ take the degree-two cochain $h$ whose values on $Q_{23},Q_{13},Q_{12}$ are respectively
\[
(-t_{21},t_{11}),\qquad (t_{22},-t_{12}),\qquad (-t_{23},t_{13}).
\]
Then $h\,d_Q=(-s_2,s_1)$. To verify that its image is a generator,
write $I$ for the restriction of $I_{E_L}$ to this formal neighbourhood
and consider its Koszul presentation
\[
0\longrightarrow R_0\xrightarrow{\iota=(-s_2,s_1)}R_0^2
\longrightarrow I\longrightarrow0.
\]
The variables $x_1,x_2,x_3$ form an $R_0$-regular sequence, so the point
Koszul resolution gives
\[
\Ext^j_{R_0}(S_0,R_0)=0\quad(j\ne3),\qquad
\Ext^3_{R_0}(S_0,R_0)\simeq S_0.
\]
Since $s_1,s_2\in(x_1,x_2,x_3)$, the map induced by $\iota$ on
degree-three Ext is zero. The connecting map is therefore an isomorphism
\[
\partial:\Ext^2_{R_0}(S_0,I)
\xrightarrow{\ \sim\ }\Ext^3_{R_0}(S_0,R_0)\simeq S_0.
\]
Let $\bar h$ denote the image of $h$ in $\operatorname{Hom}_{R_0}(Q_2,I)$.
It is a cocycle, and the lift $h$ computes its connecting class:
$h\,d_Q=\iota\circ 1$ on $Q_3=R_0$. Thus
$\partial[\bar h]=\pm1$, with the sign fixed by the Koszul convention.
In particular $[\bar h]$ is a generator over $S_0$, rather than a
possibly nonunit multiple of one.
The same presentation gives $\Ext^1_{R_0}(I,S_0)\simeq S_0$.
In the compatible Koszul orientation and Serre trace, $[\bar h]$ is
dual to its unit generator, up to the same fixed sign. We use this
normalization for the reverse cross variable $b$.
Composing $\bar h$ with the map $I\to S_0$ given on the two generators
by $(a_1,a_2)$ produces the coefficients of $a_1ds_2-a_2ds_1$
under the volume identification $\Lambda^2T\simeq T^*$, with this
fixed sign choice. Multiplying by $b$ gives \eqref{eq:formalyoneda}. This calculation is over $S_0$, so it proves the relative assertion, including singular and nonreduced linear sections. Terms involving additional alternating cross arrows belong to $(q)^2$.

\medskip\noindent\emph{Step 2: elimination of the higher mixed terms.}
Set $b'=bU$ and $(a'_1,a'_2)^t=V_0(a_1,a_2)^t$ in \eqref{eq:formalbianchi}. These are equivariant formal coordinate changes and are the identity to first order in the cross variables over $S$. They preserve the diagonal base. Rewriting the weight-zero coefficients gives
\[
w_j=b'\bigl(a'_1c_{1j}+a'_2c_{2j}\bigr).
\]
Replace obstruction generators by
\[
r'_2=r_2-b'\sum_jc_{1j}k_j,\qquad
r'_1=r_1+b'\sum_jc_{2j}k_j.
\]
This is an invertible triangular change of the six generators and hence preserves their ideal. The identity becomes
\[
b'f-a'_1r'_2+a'_2r'_1=0.
\]
Every weight-$-1$ series is divisible by $b'$, so write $r'_i=b'G_i$. Dividing this identity in the ambient power-series domain by $b'$ gives
\begin{equation}\label{eq:formalexactskew}
f=a'_1G_2-a'_2G_1.
\end{equation}
Drop the primes on the cross coordinates. We have obtained an exact presentation of the Kuranishi ideal by
\begin{equation}\label{eq:formalnormalizedkappa}
a_1G_2-a_2G_1,\quad bG_1,\ bG_2,\quad
q_1H_{2j}-q_2H_{1j}\quad(1\le j\le3),
\end{equation}
where $G_i(S,0)=e_i$ and initially $H_{ij}(S,0)\equiv m_{ij}\bmod(e_1,e_2)$. To make the latter equality exact, subtract from each $k_j$ appropriate multiples of $a_ir'_\ell$. Such a subtraction changes its coefficient of $q_i$ along $q=0$ by an arbitrary multiple of $e_\ell$, and does not change the ideal. Thus we can and do arrange
\begin{equation}\label{eq:formalidentitybase}
G_i(S,0)=e_i,\qquad H_{ij}(S,0)=m_{ij}.
\end{equation}
No assumption has been made about the remaining $q$ dependence of $G,H$.

At a point $c\in Z$, first-jet evaluation $V\to J^1\OO_X(H)|_p$ is surjective. At $L\subset K_p$, the tangent map from the Grassmannian chart to $\Hom(L,J^1\OO_X(H)|_p)$ is surjective. Consequently $e_1,e_2$ and the six $m_{ij}$ are independent parameters on $C$, complemented by five parameters $z_1,\ldots,z_5$ along $Z$. In particular
\[
S\simeq\CC[[z_1,\ldots,z_5,e_1,e_2,m_{11},\ldots,m_{23}]].
\]
In \eqref{eq:formalnormalizedkappa} make the weight-zero coordinate change
\[
e'_i=G_i(S,q),\qquad m'_{ij}=H_{ij}(S,q),\qquad z'_\ell=z_\ell.
\]
By \eqref{eq:formalidentitybase} these series reduce to $e_i,m_{ij}$ when $q=0$. Since $q_i=a_ib$ has cross degree two, the full ambient change has invertible linear part and hence a formal inverse. It restricts to the identity on $a=b=0$, so it fixes the embedded diagonal family, including its scheme structure. In these coordinates \eqref{eq:formalnormalizedkappa} is precisely the ideal in \eqref{eq:slicering}. This absorbs all higher terms simultaneously at the deepest stratum.

Suppose next that $e_1=e_2=0$ and $\rk M(c)=1$. An invertible entry permits Gaussian row and column operations on $M$ over $S$, putting it in the form
\[
M=\begin{pmatrix}1&0&0\\0&u&v\end{pmatrix}.
\]
Row operations act simultaneously on the frame of $L$, on $e$, and on the positive cross vector $a$; the skew relations change by a unit. Column operations change the point-obstruction frame. Thus they preserve the form of the asserted presentation. The incidence scheme $\Sigma$ is smooth at this point, and $e_1,e_2,u,v$ are independent parameters on $C$; complete these four parameters by nine coordinates along $\Sigma$.

Apply Gaussian operations to $H(S,q)$ in \eqref{eq:formalnormalizedkappa}. Because of \eqref{eq:formalidentitybase}, they may be chosen to reduce to the identity at $q=0$, after the preceding normalization of $M$. The row operation is accompanied by the same equivariant change on $a$ and $G$; the column operation is a change of obstruction generators. The point equations become
\[
-q_2,\qquad q_1U,\qquad q_1V,
\]
where $U(S,0)=u$ and $V(S,0)=v$. The four weight-zero series $G_1,G_2,U,V$, together with the nine unchanged diagonal coordinates, give a formal coordinate system. This change again restricts to the identity on $C$, and it gives \eqref{eq:slicering} in the rank-one matrix chart.

If $e_1=e_2=0$ and $\rk M(c)=2$, the same operations reduce the point equations to $q_1,q_2$. Modulo these equations, \eqref{eq:formalidentitybase} makes the other relations exactly
\[
a_1e_2-a_2e_1,\qquad be_1,\qquad be_2.
\]
This is also the reduction of \eqref{eq:slicering}, since a $2\times2$ minor of $M$ is a unit.

Finally suppose $p\notin E_L$. The cross tangent dimensions are $(1,0)$ by Lemma~\ref{lem:crossext}. Thus all non-diagonal tangent variables have the same nonzero weight. There is no obstruction space of that weight, and the diagonal family is smooth. The equivariant Kuranishi map is therefore zero, and the hull is $S[[a]]$. In \eqref{eq:slicering} an $e_i$ is a unit; eliminating $b$ and one $a_i$ gives exactly $S[[a]]$.

These cases cover $C$. Every manipulation was either an invertible change of obstruction generators or an equivariant automorphism of a complete ambient power-series ring. Therefore it identifies the complete Maurer--Cartan formal schemes, with their degree-zero gauge action, on every commutative local Artin $\CC$-algebra with residue field $\CC$. This proves the theorem.
\end{proof}

\medskip\noindent\emph{Compatibility remark.}
The formal coordinate change is allowed to mix a diagonal coordinate with the invariants $q_i$, but is the identity after setting $a=b=0$. Thus the result preserves the diagonal family; it does not prescribe a canonical projection of all nearby deformations onto that family. This is exactly the compatibility needed for the completed map from the corrected component.

\begin{proposition}\label{prop:kuranishi}
The invariant ring of the algebra in \eqref{eq:slicering} is
\begin{equation}\label{eq:ambientcompletedring}
A_c^T\simeq
S_c[[q_1,q_2]]\Big/
\left(
e_iq_j\ (1\le i,j\le2),\
q_1m_{2k}-q_2m_{1k}\ (1\le k\le3)
\right),\qquad q_i=a_ib.
\end{equation}
This is $\widehat{\OO}_{M^{ss},c}$, and the completed map from $C$ is the quotient by $(q_1,q_2)$.
\end{proposition}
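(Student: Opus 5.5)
The argument has two parts. First we compute the torus invariants of the slice ring $A_c$ of Theorem~\ref{thm:kuranishi}. Then we identify the invariant ring with the completed local ring of $M^{ss}$ using the Alper--Hall--Rydh local structure recalled in Section~\ref{sec:hearts}.

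\textbf{Invariants.} Since $T\simeq\mathbb G_m$ is linearly reductive, taking invariants is exact. Hence $A_c^T$ is the quotient of $\mathscr R^T$ by $I^T$, where $\mathscr R=S_c[[a_1,a_2,b]]$ and $I$ is the ideal in \eqref{eq:slicering}. With weights $1,1,-1$, the invariant subring of $\mathscr R$ is $S_c[[q_1,q_2]]$, $q_i=a_ib$: an invariant monomial $a^\mu b^n$ has $|\mu|=n$ and is a product of the $q_i$. Completeness causes no trouble because each weight-graded piece of $\mathscr R$ is a complete $S_c[[q]]$-module.

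The invariant part of $I$ is $\sum_g \mathscr R_{-\mathrm{wt}(g)}\,g$ over the homogeneous generators $g$. We take each generator in turn.
\begin{itemize}
\item For $f=a_1e_2-a_2e_1$ (weight $1$), the weight-$(-1)$ part $\mathscr R_{-1}$ is $bS_c[[q]]$. This contributes $bf=q_1e_2-q_2e_1$.
\item For $be_i$ (weight $-1$), $\mathscr R_1$ is generated by $a_1,a_2$. This contributes $a_jbe_i=e_iq_j$, all four products.
\item The weight-zero generators $b(a_1m_{2k}-a_2m_{1k})$ contribute $q_1m_{2k}-q_2m_{1k}$.
\end{itemize}
The relation $q_1e_2-q_2e_1$ is already a combination of the $e_iq_j$, so it drops out. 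This gives exactly \eqref{eq:ambientcompletedring}.

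\textbf{Identification with $\widehat\OO_{M^{ss},c}$.} We apply \cite[Theorems 4.12, 4.16]{AHR} at the closed polystable point $E_c$, whose stabilizer is $\mathbb G_m^2$. The hypotheses were checked in Section~\ref{sec:hearts}: the diagonal is affine and the stabilizer is linearly reductive. AHR then says that $\widehat\OO_{M^{ss},c}$ is the ring of invariants of a $\mathrm{GL}$-equivariant miniversal deformation ring. The diagonal $\mathbb G_m$ acts trivially, so only $T$ matters. Theorem~\ref{thm:kuranishi} supplies such an equivariant semi-universal slice, valid on all local Artin $\CC$-algebras.

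The step I expect to be the main obstacle is the identification itself. The completed good moduli ring must equal the invariants of the completed hull, and this needs the formal miniversal ring in AHR to coincide with our $A_c$ up to equivariant isomorphism. I would argue uniqueness of equivariant semi-universal hulls up to noncanonical equivariant isomorphism. The key input is that both hulls are minimal, since they have the same tangent space $\Ext^1(E_c,E_c)$. Reductivity of the stabilizer then lets the comparison map be chosen equivariant. Completion commutes with taking invariants because $T$ is linearly reductive.

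\textbf{The map from $C$.} Finally, $C$ corresponds to $a=b=0$ in the slice, compatibly with the diagonal family, as stated in Theorem~\ref{thm:kuranishi}. The completed map $\widehat\OO_{M^{ss},c}\to S_c$ is therefore the restriction of invariants along $a=b=0$, which sends $q_i\mapsto0$. The relations in \eqref{eq:ambientcompletedring} all lie in $(q_1,q_2)$, so the quotient by $(q_1,q_2)$ is exactly $S_c$.
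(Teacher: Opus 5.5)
Your proposal is correct and follows the paper's proof: like the paper, you read off the invariants weight by weight (weight-$(-1)$ multiples of $a_1e_2-a_2e_1$, weight-$1$ multiples of $be_i$, and the weight-zero relations), discard $q_1e_2-q_2e_1$ as redundant, apply the AHR local quotient comparison to the equivariant hull of Theorem~\ref{thm:kuranishi}, and obtain the map from $C$ as the quotient by $(q_1,q_2)$. Your appeal to uniqueness of equivariant miniversal hulls, and your weight-projection justification of exactness in the completed setting, make explicit two steps the paper compresses into ``apply AHR to the equivariant hull'' and ``invariants are exact at each finite quotient.''
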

\begin{proof}
Every invariant monomial in the formal power-series algebra is generated over $S_c$ by $a_1b,a_2b$. Since the torus is linearly reductive, invariants are exact at each finite quotient, and passing to the inverse limit computes the completed invariant ring. Multiplying the relation $a_1e_2-a_2e_1$ by a weight-$-1$ monomial gives a multiple of $q_1e_2-q_2e_1$. Multiplying $be_i$ by a weight-$1$ monomial gives the ideal generated by $e_iq_j$. These already imply the former relation. The weight-zero relations become $q_1m_{2k}-q_2m_{1k}$. Every homogeneous multiple is accounted for in this way, proving \eqref{eq:ambientcompletedring}.

Apply the local quotient comparison recorded in Section~\ref{sec:hearts}, using \cite[Theorem 4.12 and Remark 4.17]{AHR}, to the equivariant hull of Theorem~\ref{thm:kuranishi}. Its invariant ring is the completed moduli ring; the fixed diagonal family gives the quotient map from $C$.
\end{proof}

\begin{lemma}\label{lem:componentideals}
Near a point of $Z$, write
\[
M=\begin{pmatrix}
m_{11}&m_{12}&m_{13}\\
m_{21}&m_{22}&m_{23}
\end{pmatrix},
\qquad
N=\begin{pmatrix}
m_{11}&m_{12}&m_{13}&q_1\\
m_{21}&m_{22}&m_{23}&q_2
\end{pmatrix}.
\]
In the regular ring $S_c[[q_1,q_2]]$, let
\[
P_C=(q_1,q_2),\qquad
P_R=(e_1,e_2,I_2(N)).
\]
Then the defining ideal $J$ in \eqref{eq:ambientcompletedring} satisfies
\begin{equation}\label{eq:primeintersection}
J=P_C\cap P_R.
\end{equation}
Both $P_C$ and $P_R$ are prime, and
\[
P_C+P_R=(q_1,q_2,e_1,e_2,I_2(M)).
\]
\end{lemma}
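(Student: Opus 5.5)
The proof is commutative algebra in the regular local ring $P:=S_c[[q_1,q_2]]$. Near $z\in Z$ we have $S_c\simeq\CC[[z_1,\ldots,z_5,e_1,e_2,m_{11},\ldots,m_{23}]]$, so $P$ is a power series ring in $15$ variables. The plan is to prove primality, compute $P_C+P_R$, and then prove $J=P_C\cap P_R$ by two inclusions.

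\emph{Primality and the sum.} $P/P_C\simeq S_c$ is a regular domain, so $P_C$ is prime. For $P_R$, note that
\[
P/P_R\simeq\CC[[z,m,q]]/I_2(N).
\]
This is a power series ring over the completed rank-at-most-one locus of a generic $2\times4$ matrix. That ring is a normal domain: it is the completion of the Segre ring of $\PP^1\times\PP^3$, and completion preserves normality of an excellent normal graded domain localized at its vertex, so it stays a domain. Adjoining the formal variables $z$ keeps it a domain. The sum is immediate: modulo $(q_1,q_2)$ the minors of $N$ involving the fourth column vanish and the rest give $I_2(M)$.

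\emph{Inclusion $J\subset P_C\cap P_R$.} Check each generator of $J$:
\begin{itemize}
\item $e_iq_j$ lies in $P_C$ because of $q_j$, and in $P_R$ because of $e_i$;
\item $q_1m_{2k}-q_2m_{1k}$ lies in $P_C$, and it is, up to sign, the $2\times2$ minor of $N$ on columns $k,4$, hence lies in $P_R$.
\end{itemize}

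\emph{Inclusion $P_C\cap P_R\subset J$.} This is the main step. I would first grade by $q$-degree, because all ideals involved are homogeneous for the grading $\deg q_i=1$, $\deg(S_c)=0$, and the completed setting can be handled degreewise since each graded piece is a finite free $S_c$-module. Write $x\in P_C\cap P_R$ as $x=q_1y_1+q_2y_2$, so that $x$ has no degree-zero part. Modulo $(e_1,e_2)$, which lies in both $P_R$ and $J+(e)$, reduce to the ring
\[
\bar P=\CC[[z,m]][[q_1,q_2]].
\]
In $\bar P$ the image of $P_R$ is $I_2(N)$, and the image of $J$ is the part of $I_2(N)$ involving the fourth column, $J'=(q_1m_{2k}-q_2m_{1k})$.

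The key claim is $I_2(N)\cap(q)=J'$ in $\bar P$. Write a minor of $M$ as $\delta_{kl}$. The Plücker relation
\[
q_1\delta_{kl}=m_{1k}(q_1m_{2l}-q_2m_{1l})-m_{1l}(q_1m_{2k}-q_2m_{1k})
\]
shows that $q\cdot I_2(M)\subset J'$. The remaining task is to show that an element $\sum c_{kl}\delta_{kl}+j$ with $j\in J'$ lying in $(q)$ forces $\sum c_{kl}(0)\delta_{kl}=0$. This follows by setting $q=0$, since $J'\subset(q)$. It then remains to check that an element of $(q)\cap I_2(M)\bar P$ equals $q\cdot I_2(M)\subset J'$, which holds because $q_1,q_2$ form a regular sequence modulo $I_2(M)\bar P$, the quotient $\bar P/I_2(M)\bar P$ being $(\text{domain})[[q]]$.

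Finally, lift back to $P$. We have $x\in(q)$ and $x\equiv j\bmod(e)$ with $j\in J'$, so $x-j\in(e)\cap(q)$. The step I expect to need the most care is checking that $(e)\cap(q)=(e_iq_j)\subset J$ in $P$. This holds because $e_1,e_2,q_1,q_2$ are part of a regular system of parameters, so the two ideals are generated by disjoint sets of variables and their intersection is their product. This gives $x\in J$ and completes \eqref{eq:primeintersection}.
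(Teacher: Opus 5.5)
Your proposal is correct and follows essentially the same route as the paper. Both arguments reduce modulo $(e_1,e_2)$, use the Pl\"ucker syzygies (for both $q_1$ and $q_2$) together with $(q)\cap I_2(M)\bar P=(q)I_2(M)\bar P$ to get $(q)\cap I_2(N)=J'$, then restore $e_1,e_2$, and obtain primality from the rank-one $2\times4$ determinantal ring. Your explicit lift using $(e)\cap(q)=(e_iq_j)$ simply spells out the paper's ``reintroducing the independent parameters'' step, and the preliminary $q$-grading remark is never used and can be dropped.
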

\begin{proof}
The inclusion $J\subset P_C\cap P_R$ is immediate. For the reverse inclusion, first ignore the independent regular parameters $e_1,e_2$. If $\Delta_{jk}=m_{1j}m_{2k}-m_{1k}m_{2j}$, the determinantal syzygies
\[
q_1\Delta_{jk}
=m_{1j}(q_1m_{2k}-q_2m_{1k})
-m_{1k}(q_1m_{2j}-q_2m_{1j})
\]
and the analogous identities with $q_2$ show that
\[
(q_1,q_2)\cap I_2(N)
=(q_1m_{2k}-q_2m_{1k}:1\le k\le3).
\]
Indeed $I_2(N)=I_2(M)+(q_1m_{2k}-q_2m_{1k})$, and
$(q_1,q_2)\cap I_2(M)=(q_1,q_2)I_2(M)$ because the entries of $M$ lie in the coefficient ring; the displayed syzygies absorb this product. Reintroducing the independent parameters $e_1,e_2$ gives
\[
(q_1,q_2)\cap(e_1,e_2,I_2(N))
=(e_iq_j,\ q_1m_{2k}-q_2m_{1k}),
\]
which is $J$. The ideal $P_C$ is prime because its quotient is the regular ring $S_c$. The quotient by $P_R$ is the generic rank-one $2\times4$ determinantal ring over the regular base defined by $e_1=e_2=0$, hence is a normal Cohen--Macaulay domain. The formula for the sum is immediate.
\end{proof}

\begin{theorem}\label{thm:ambientgluing}
The morphism
\[
C=X\times\Gr(2,V)\longrightarrow M^{ss}
\]
is a closed immersion onto a smooth reduced irreducible component. The good moduli space $M^{ss}$ is reduced and has exactly two irreducible components,
\[
M^{ss}=R\cup C,
\qquad \dim R=10,
\qquad \dim C=13.
\]
The principal component $R$ is normal, and the scheme-theoretic intersection is
\begin{equation}\label{eq:actualschemeintersection}
R\times_{M^{ss}}C=\Sigma.
\end{equation}
In particular the normalization morphism $R^\nu\to R$ of Theorem~\ref{thm:normalization} is an isomorphism and $\Rtilde\to R$ is the small resolution itself.
\end{theorem}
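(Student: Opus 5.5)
The plan is to derive every assertion from the completed local rings of Proposition~\ref{prop:kuranishi} and Lemma~\ref{lem:componentideals}, using Lemma~\ref{lem:pointsexhaust} to make the conclusions global. By Lemma~\ref{lem:pointsexhaust}, the underlying set of $M^{ss}$ is $R\cup C$. At a point of $M^{ss}\setminus C$, the object is stable and lies in the image of $\Rtilde\setminus\widetilde\Sigma$. Theorem~\ref{thm:principalcomponent}(i) and its deformation-functor argument show that $M^{ss}$ is smooth there and agrees with $R$. So all the work is at the closed points $c\in C$, where $\widehat{\OO}_{M^{ss},c}=A_c^T=S_c[[q_1,q_2]]/J$.

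\textbf{Reducedness and components.} The first step is reducedness. Lemma~\ref{lem:componentideals} gives $J=P_C\cap P_R$, an intersection of two primes, so $A_c^T$ is reduced at points of $Z$. At the other points of $C$, I would redo the same computation in the rank-one and rank-two charts of Theorem~\ref{thm:kuranishi}. Where $p\notin E_L$, the ring is just $S_c[[q]]$ for a single invariant $q$ with no relations, hence reduced. Reducedness of all completions at closed points then gives reducedness of $M^{ss}$, since the local rings are excellent.

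Next I would identify the components. The map $C\to M^{ss}$ is injective on points by Corollary~\ref{cor:actual}. Completed, it is the surjection $A_c^T\to A_c^T/(q_1,q_2)=S_c$ of Proposition~\ref{prop:kuranishi}. A proper, injective, formally unramified morphism of this kind is a closed immersion. Because $P_C$ is a minimal prime of $J$ and $\dim S_c=13>10$, the image is an irreducible component.

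\textbf{The principal branch and normality.} The other minimal prime $P_R$ has to be the completion of $R$. On the smooth-curve locus, $P_R$ defines the branch meeting $M^{ss}\setminus C$. Its quotient is the $2\times4$ determinantal ring over $\widehat{\OO}_{Z}$ (or over $\widehat{\OO}_\Sigma$ off $Z$), which has dimension $13-2-1=10$. So $\widehat{\OO}_{R,c}=S_c[[q]]/P_R$, and this is a normal domain, as Lemma~\ref{lem:componentideals} records. Hence $R$ is normal at every point of $C$; away from $C$ it is smooth. Since the completion of $R$ at $c$ is a domain, $R$ is analytically irreducible there. Therefore $\nu:R^\nu\to R$ is finite, birational and bijective onto a normal target, and so it is an isomorphism; consequently $\Rtilde\to R$ is the small resolution. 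Comparing with \eqref{eq:formalring}, I would identify $N$ with the matrix $\mathbf x$ and $e_1,e_2$ with coordinates transverse to $Z$.

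\textbf{The intersection.} The scheme-theoretic intersection has completion $S_c[[q]]/(P_C+P_R)=S_c/(e_1,e_2,I_2(M))$. This is exactly the completion of $\Sigma$: by Proposition~\ref{prop:Sigma}, $\Sigma$ is the zero locus of $e$ cut further by the $2\times2$ minors of the jet matrix~\eqref{eq:firstjetmatrix}. To globalize, both $R\times_{M^{ss}}C$ and $\Sigma$ are closed subschemes of $C$. Their ideals agree after completion at every closed point, hence they are equal. The one compatibility I must check is that the coordinates $e_i,m_{ij}$ of Theorem~\ref{thm:kuranishi} are the true jet coordinates on $C$. This holds because the coordinate changes there restrict to the identity on $a=b=0$.

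\textbf{Main obstacle.} I expect the hardest step to be showing that the prime $P_R$ really is the completion of the reduced closure $R$, and not some extra analytic branch, for points $c\in C\setminus\Sigma$ and $c\in\Sigma\setminus Z$. If $p\in E_L$ but $c\notin\Sigma$, then $P_R$ contains a unit-row relation and the branch is empty. This has to match the fact that such a $c$ is not in the image of $\Phi$, which follows from Theorem~\ref{thm:principalcomponent}(ii). I would settle this chart by chart: in each chart, count that $P_R$ is either empty or irreducible of dimension $10$, and use that $\Phi(\widetilde\Sigma)=\Sigma$ is contained in $R$.
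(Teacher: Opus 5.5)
Your architecture is the paper's own: completed local rings at corrected points from Proposition~\ref{prop:kuranishi} and Lemma~\ref{lem:componentideals}; Lemma~\ref{lem:pointsexhaust} with the Artin-functor reconstruction of Theorem~\ref{thm:principalcomponent} away from $C$; a closed-immersion argument from finiteness plus completed surjectivity; identification of the branch of $R$ with $P_R$ by dimension; and $P_C+P_R$ for the intersection. One local computation is wrong, however.

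\textbf{The error at $p\notin E_L$.} At points $c=(p,L)$ with $p\notin E_L$ you assert $\widehat{\OO}_{M^{ss},c}\simeq S_c[[q]]$ with a free invariant $q$. By Lemma~\ref{lem:crossext} the cross tangent dimensions there are $(1,0)$. So the slice of Theorem~\ref{thm:kuranishi} is $S_c[[a]]$ with $\operatorname{wt}(a)=1$ and no variable of negative weight, and its torus invariants are just $S_c$. Equivalently, in \eqref{eq:ambientcompletedring} some $e_i$ is a unit, and $e_iq_1=e_iq_2=0$ forces $q_1=q_2=0$. The same happens when $e=0$ but $\rk M(c)=2$: there a unit $2\times2$ minor of $M$, together with the relations $q_1m_{2k}-q_2m_{1k}$, kills both $q_j$.

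\textbf{Why it matters.} This is not cosmetic. The locus $p\notin E_L$ is dense open in $C$. A regular fourteen-dimensional completed local ring there would make $M^{ss}$ fourteen-dimensional along a dense open part of $C$. Then $C$ could not be an irreducible component, which contradicts your own computation at points of $Z$, where $\dim A_c^T=13$. It also contradicts the set-theoretic decomposition $R\cup C$ into pieces of dimensions $10$ and $13$.

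\textbf{The fix.} The correct statement is that only the corrected branch survives, so $\widehat{\OO}_{M^{ss},c}=S_c$. This shows that $M^{ss}$ agrees scheme-theoretically with $C$ near $C\setminus\Sigma$, and it gives reducedness there at once. With this repaired, the rest of your argument goes through as in the paper.

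\textbf{Smaller corrections.}
\begin{enumerate}
\item The dimension of the $P_R$ branch is $15-2-3$: the ambient $S_c[[q_1,q_2]]$ has dimension $15$, you subtract the two $e_i$, and $I_2$ of a generic $2\times4$ matrix has codimension $3$.
\item In \eqref{eq:formalring} the $e_i$ are not coordinates on $R$, since they vanish on that branch. The correct match is $\mathbf x=N$ over the $z$-directions of $\widehat{\OO}_{Z,z}$.
\item Your anticipated obstacle is settled as in the paper. $R$ is a ten-dimensional irreducible component through each $c\in\Sigma$. Hence $\widehat{\OO}_{R,c}$ is reduced and equidimensional of dimension $10$, and its minimal primes are minimal primes of $\widehat{\OO}_{M^{ss},c}$. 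Dimension excludes $P_C$, so the ideal of $R$ in the completion is exactly $P_R$.
\end{enumerate}
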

\begin{proof}
At $c\in Z$, Proposition~\ref{prop:kuranishi} and Lemma~\ref{lem:componentideals} give a reduced completed local ring with two minimal primes. The same two-branch description holds at $c\in\Sigma\setminus Z$: one entry of $M$ is a unit, and eliminating it reduces the determinantal equations to the corresponding smooth rank-one chart. At $c\in C\setminus\Sigma$, either an $e_i$ is a unit or a $2\times2$ minor of $M$ is a unit. In either case \eqref{eq:ambientcompletedring} forces $q_1=q_2=0$, and there is only the corrected branch. Thus at every corrected point the quotient by $P_C=(q_1,q_2)$ is $S_c$, with the stated completed map from $C$. The morphism $C\to M^{ss}$ is proper because $C$ is projective, and it is injective on geometric points by Corollary~\ref{cor:actual}; hence it is quasi-finite and therefore finite. For a finite morphism, surjectivity after completion implies surjectivity on the local rings by faithful flatness of completion. Thus $C\to M^{ss}$ is a closed immersion and is formally a smooth component.

Along $Z$, the quotient by $P_R$ is the normal rank-one $2\times4$ determinantal branch; along $\Sigma\setminus Z$ it is smooth. The principal image $R$ contains the dense stable rational-quintic locus and has dimension ten by Theorem~\ref{thm:principalcomponent}; hence its completed local branch along the common boundary is exactly $P_R$. Away from the boundary, Theorem~\ref{thm:principalcomponent} identifies $R$ with the smooth open subset of $\Rtilde$. Thus $R$ is normal everywhere. In particular $R=R^\nu$, and the small resolution of Theorem~\ref{thm:normalization} lands directly on $R$.

The sum $P_C+P_R$ gives $S_c/(e_1,e_2,I_2(M))$, the completed local ring of $\Sigma$ in Proposition~\ref{prop:Sigma}. Together with the rank-stratum calculations above, this proves \eqref{eq:actualschemeintersection} everywhere.

Lemma~\ref{lem:pointsexhaust} shows that there are no other geometric branches. Off $C$, the point lies in the stable principal locus. The reconstruction on Artin deformation functors in Theorem~\ref{thm:principalcomponent} identifies its local moduli with the smooth open subset of $\Rtilde$. Hence the local calculations cover every point and show that $M^{ss}$ is reduced with exactly the two asserted irreducible components.
\end{proof}

\section{Higher degree}\label{sec:remaining}
The polynomial in \eqref{eq:budget} first permits correction length two at $d=7$. For $d>5$, the necessary transport and terminal-factor classification remain open here; the polynomial alone neither establishes a corrected family nor determines its moduli geometry.


\begin{thebibliography}{99}
\bibitem{AlperGMS} J.~Alper,
\emph{Good moduli spaces for Artin stacks},
Ann. Inst. Fourier (Grenoble) \textbf{63} (2013), no.~6, 2349--2402;
\href{https://doi.org/10.5802/aif.2833}{doi:10.5802/aif.2833}.
\bibitem{AHR} J.~Alper, J.~Hall, and D.~Rydh,
\emph{A Luna \'etale slice theorem for algebraic stacks},
Ann. of Math. (2) \textbf{191} (2020), no.~3, 675--738;
\href{https://doi.org/10.4007/annals.2020.191.3.1}{doi:10.4007/annals.2020.191.3.1}.
\bibitem{AHLH} J.~Alper, D.~Halpern-Leistner, and J.~Heinloth,
\emph{Existence of moduli spaces for algebraic stacks},
Invent. Math. \textbf{234} (2023), no.~3, 949--1038;
\href{https://doi.org/10.1007/s00222-023-01214-4}{doi:10.1007/s00222-023-01214-4}.
\bibitem{APR} M.~Altavilla, M.~Petkovi\'c, and F.~Rota,
\emph{Moduli spaces on the Kuznetsov component of Fano threefolds of index~2},
\'Epijournal G\'eom. Alg\'ebrique \textbf{6} (2022), Art.~13.
\href{https://doi.org/10.46298/epiga.2022.7047}{doi:10.46298/epiga.2022.7047}.
\bibitem{BMT} A.~Bayer, E.~Macr\`i, and Y.~Toda,
\emph{Bridgeland stability conditions on threefolds I: Bogomolov--Gieseker type inequalities},
J. Algebraic Geom. \textbf{23} (2014), 117--163.
\bibitem{Bridgeland} T.~Bridgeland,
\emph{Stability conditions on triangulated categories},
Ann. of Math. (2) \textbf{166} (2007), no.~2, 317--345;
\href{https://doi.org/10.4007/annals.2007.166.317}{doi:10.4007/annals.2007.166.317}.
\bibitem{CHL} K.~Chung, J.~Hong, and S.~Lee,
\emph{Geometry of moduli spaces of rational curves in linear sections of Grassmannian $\Gr(2,5)$},
J. Pure Appl. Algebra \textbf{222} (2018), 868--888.
\bibitem{CKK} K.~Chung, J.~Kim, and J.-S.~Kim,
\emph{Quartic curves in the quintic del Pezzo threefold},
to appear in J. Math. Soc. Japan;
\href{https://doi.org/10.2969/jmsj/95659565}{doi:10.2969/jmsj/95659565};
\href{https://arxiv.org/abs/2505.09130v2}{arXiv:2505.09130v2} (2 June 2026).
\bibitem{ELO} A.~I.~Efimov, V.~A.~Lunts, and D.~O.~Orlov,
\emph{Deformation theory of objects in homotopy and derived categories I: general theory},
Adv. Math. \textbf{222} (2009), no.~2, 359--401;
\href{https://arxiv.org/abs/math/0702838}{arXiv:math/0702838}.
\bibitem{Keller} B.~Keller,
\emph{Introduction to $A_\infty$ algebras and modules},
Homology Homotopy Appl. \textbf{3} (2001), no.~1, 1--35;
\href{https://arxiv.org/abs/math/9910179}{arXiv:math/9910179}.
\bibitem{Li} C.~Li,
\emph{Stability conditions on Fano threefolds of Picard number~1},
J. Eur. Math. Soc. \textbf{21} (2019), no.~3, 709--726;
\href{https://doi.org/10.4171/JEMS/848}{doi:10.4171/JEMS/848};
\href{https://arxiv.org/abs/1510.04089}{arXiv:1510.04089}.
\bibitem{Meng} J.~Meng,
\emph{Quantum cohomology of a Fano quiver moduli space},
\href{https://arxiv.org/abs/2412.15987}{arXiv:2412.15987} (2024).
\bibitem{OrlovV5} D.~O.~Orlov,
\emph{Exceptional set of vector bundles on the variety $V_5$},
Moscow Univ. Math. Bull. \textbf{46} (1991), no.~5, 48--50;
Russian original: Vestnik Moskov. Univ. Ser.~1 Mat. Mekh. (1991), no.~5, 69--71.
\bibitem{PT} D.~Piyaratne and Y.~Toda,
\emph{Moduli of Bridgeland semistable objects on 3-folds and Donaldson--Thomas invariants},
J. Reine Angew. Math. \textbf{747} (2019), 175--219;
\href{https://doi.org/10.1515/crelle-2016-0006}{doi:10.1515/crelle-2016-0006}.
\bibitem{Sanna} G.~Sanna,
\emph{Rational curves and instantons on the Fano threefold $Y_5$},
Ph.D. thesis, SISSA, 2014;
\href{https://arxiv.org/abs/1411.7994}{arXiv:1411.7994}.
\bibitem{Schmidt} B.~Schmidt,
\emph{Bridgeland stability on threefolds---some wall crossings},
J. Algebraic Geom. \textbf{29} (2020), 247--283;
\href{https://arxiv.org/abs/1509.04608}{arXiv:1509.04608}.
\bibitem{Toda} Y.~Toda,
\emph{Moduli stacks of semistable sheaves and representations of Ext-quivers},
Geom. Topol. \textbf{22} (2018), 3083--3144;
\href{https://arxiv.org/abs/1710.01841}{arXiv:1710.01841}.
\bibitem{VassilievLow} D.~A.~Vassiliev,
\emph{Rank two sheaves with low discriminant on the Fano threefold of index 2 and degree 5},
\href{https://arxiv.org/abs/2510.10608v2}{arXiv:2510.10608v2} (2025; revised 4 February 2026).
\end{thebibliography}
\end{document}